\long\def\symbolfootnote[#1]#2{\begingroup
\def\thefootnote{\fnsymbol{footnote}}\footnote[#1]{#2}\endgroup}
\newtheorem{theorem}{Theorem}[section]
\newtheorem{lem}[theorem]{Lemma}
\newtheorem{thm}[theorem]{Theorem}
\newtheorem{sublemma}[theorem]{Sublemma}
\newtheorem{prop}[theorem]{Proposition}
\newtheorem{cor}[theorem]{Corollary}
\newtheorem{claim}{Claim}
\theoremstyle{definition}
\newtheorem{rem}[theorem]{Remark}
\newtheorem{defin}[theorem]{Definition}
\newtheorem{constr}[theorem]{Construction}
\newtheorem{ex}[theorem]{Example}
\newtheorem{step}{Step}
\newcommand{\R}{\mathbb{R}}
\newcommand{\Z}{\mathbb{Z}}
\begin{document}

\title{Balanced walls for random groups}

\author{John~M.~Mackay}
\address{University of Bristol,
School of Mathematics, University Walk, Bristol, BS8 1TW, UK}
\email{john.mackay@bristol.ac.uk}

\author{Piotr Przytycki}
\address{Dep.\ of Math.\ \& Stat., McGill University\\
Burnside Hall, Room 1005, 805 Sherbrooke St.\ W\\
Montreal, Quebec, Canada H3A 2K6\\
and Inst.\ of Math., Polish Academy of Sciences\\
 \'Sniadeckich 8, 00-656 Warsaw, Poland}
\email{piotr.przytycki@mcgill.ca}

\begin{abstract}
We study a random group $G$ in the Gromov density model and its
Cayley complex $X$. For density $<\frac{5}{24}$ we define walls
in $X$ that give rise to a nontrivial action of $G$ on a CAT(0)
cube complex. This extends a result of Ollivier and Wise, whose
walls could be used only for density $<\frac{1}{5}$. The
strategy employed might be potentially extended in future to
all densities $<\frac{1}{4}$.
\end{abstract}

\maketitle

\section{Introduction}
\label{sec:intro}

Following Gromov \cite{Gro} and Ollivier \cite{O-sur}, we study random
groups in the following \emph{Gromov density model}. Fix $m$ letters $S
= \{s_1, s_2, \ldots, s_{m}\}$, and let $S^{-1}$ denote the formal inverses of $S$. Choose a \emph{density} $d \in
(0,1)$. A \emph{random group (presentation) at density $d$ and
length $l$} is a group $G=\langle S | R\rangle$, where $R$ is a
collection of $\lfloor(2m-1)^{dl}\rfloor$ cyclically reduced
words in $S\cup S^{-1}$ of length $l$ chosen independently and
uniformly at random. In our article we assume additionally that $l$ is even.
A random group (presentation) at density
$d$ has property P \emph{with overwhelming probability} (shortly \emph{w.o.p.}) if the probability of $G$ having P tends to $1$ as $l\rightarrow\infty$.

Gromov and Ollivier proved that for $d>\frac{1}{2}$ a random
group $G$ is w.o.p. $\Z/2\Z$ (we assumed $l$ to be even), while
for $d<\frac{1}{2}$ it is w.o.p.\ non-elementary hyperbolic
with hyperbolicity constant linear in $l$, torsion free, and
with contractible Cayley complex~$X$ \cite{Gro,O-hyp}. For
$d>\frac{1}{3}$ a random group $G$ has w.o.p.\ Kazhdan's
property (T), which was proved by \.Zuk \cite{Z} (and completed
by Kotowski--Kotowski \cite{KK}). On the other hand, property
(T) fails for $d<\frac{1}{5}$, since in that range
Ollivier--Wise proved that w.o.p.\ $G$ acts nontrivially on a
CAT(0) cube complex (they also proved that the action is proper
for $d<\frac{1}{6}$) \cite{OW}.

Their cube complex is obtained from Sageev's construction
\cite{S}, using an action of $G$ on a suitable space with
walls. They use the following wall structure in the Cayley
complex~$X$ of $G=\langle S | R\rangle$: Consider the graph
whose vertices are edge midpoints of $X$ and whose edges are
pairs of opposite edge midpoints in the $2$--cells of $X$.
\emph{Hypergraphs} are connected components of that graph,
immersed in $X$ in such a way that its edges are mapped to the
diagonals of the $2$--cells. Ollivier--Wise prove that for
$d<\frac{1}{5}$ a hypergraph is w.o.p.\ an embedded tree,
separating $X$ essentially, with cocompact stabilizer $H$.
Thus, possibly after replacing $H$ with its index $2$ subgroup
preserving the halfspaces, the number of relative ends
satisfies $e(G,H)>1$, and hence the action of $G$ on the CAT(0)
cube complex given by Sageev's construction is nontrivial.
However, for $d>\frac{1}{5}$, w.o.p.\ hypergraphs
self-intersect, thus we do not have control on $H$.

The aim of our paper is to introduce a new wall structure, by
replacing the antipodal relation inside $2$--cells by a
different relation, so that the  resulting hypergraphs are
embedded trees and we can perform Sageev's construction. While
our strategy is designed to work up to density $\frac{1}{4}$,
the technical complications that arise force us for the moment
to content ourselves with the following.

\begin{thm}
\label{thm:main} In the Gromov density model at density
$<\frac{5}{24}$, a random group w.o.p.\ acts nontrivially on a
$\mathrm{CAT(0)}$ cube complex and does not satisfy Kazhdan's
property $\mathrm{(T)}$.
\end{thm}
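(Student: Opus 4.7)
The plan is to follow the Ollivier--Wise blueprint, but with a finer wall structure in the Cayley complex $X$ that stays well-behaved up to density $\frac{5}{24}$. Concretely, for each relator (equivalently, each $2$--cell $C$ of $X$) I would prescribe a combinatorial pairing of edge midpoints of $\partial C$ --- not the antipodal pairing used in \cite{OW}, but a ``balanced'' pairing that can be read off from the random word on $\partial C$. Taking the graph whose vertices are edge midpoints of $X$ and whose edges join paired midpoints inside each $2$--cell, I would define the new hypergraphs as the connected components of that graph, immersed in $X$ as broken diagonals of the $2$--cells. The rule defining the pairing has to be local (depending only on the relator decorating $C$), $G$--equivariant, and symmetric enough to give each midpoint exactly one partner; I would choose it so that in the ``typical'' $2$--cell the pairing still separates $\partial C$ into two arcs of comparable length, so that walls continue to separate $X$ into two deep halfspaces.

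With the pairing fixed, the proof splits into the standard three probabilistic/geometric tasks: (i) show that each hypergraph is an embedded tree; (ii) show that it separates $X$ into two unbounded sides; (iii) show that the stabilizer $H$ of a hypergraph acts cocompactly on it. For (i) the main point is to rule out two distinct hyperedges of the same hypergraph meeting in a common midpoint or a common arc, which via the Ollivier local-to-global philosophy reduces to showing that any simply connected reduced van Kampen diagram $Y$ in which two ``balanced diagonals'' form a short loop or bigon must contain a piece configuration of abnormally large total length. This is where I expect to have to do the real work: one needs an isoperimetric-style argument bounding $|\partial Y|$ from below in terms of the number of $2$--cells of $Y$ and the combinatorics of how the balanced diagonals cross the cells, and to combine it with the probabilistic fact (from \cite{O-hyp}) that w.o.p.\ no reduced diagram of area $\le A$ contains pieces of total length exceeding $(d+\varepsilon)l\cdot(\text{area})$. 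The threshold $\frac{5}{24}$ should emerge by optimizing the isoperimetric constant against the density; choosing the balanced pairing so that the diagonals meet each $2$--cell in a way that forces two ``expensive'' arcs on $\partial C$ rather than one is what lets the argument go past $\frac{1}{5}$.

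Granted (i)--(iii), steps (ii) and (iii) follow essentially as in \cite{OW}: linear hyperbolicity gives that each side of a wall contains an infinite geodesic ray from the basepoint, and the finite-to-one property of the wall-to-diagram map combined with Gromov's local-global result yields cocompactness of the hypergraph stabilizer. Passing to the index $2$ subgroup that preserves sides, one obtains $e(G,H)>1$, so Sageev's construction \cite{S} produces a CAT(0) cube complex on which $G$ acts without a global fixed point. Finally, the absence of property (T) is automatic: Niblo--Reeves showed that a group acting on a CAT(0) cube complex without fixed point fails to have property (T), so this conclusion is free once the nontrivial action is in hand.

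The main obstacle, as above, is the combinatorial--probabilistic argument in step (i): devising a pairing rule on $\partial C$ that is simple enough to analyze yet rigid enough that self-intersections of hypergraphs force van Kampen diagrams of forbidden piece structure. The bookkeeping of how a hypergraph crosses a chain of $2$--cells glued along pieces, and the careful balancing of the lengths of the two arcs cut off by each balanced diagonal, is exactly what has to be pushed from $\frac{1}{5}$ up to $\frac{5}{24}$, and this is where I expect most of the technical effort --- and the specific density bound --- to come from.
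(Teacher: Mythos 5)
There is a genuine gap: your proposal defers exactly the step that constitutes the theorem. You write that the pairing should be ``local (depending only on the relator decorating $C$)'' and readable off the random word on $\partial C$, and that the embedded-tree property ``is where I expect to have to do the real work.'' But a pairing determined by the relator alone cannot work, and this is the conceptual point the paper is built around. The obstruction above density $\frac{1}{5}$ is not a defect of a single cell's pairing: it comes from two $2$--cells $C,C'$ sharing a path $A$ with $|A|>\frac14 l$, which forces a hypergraph using the antipodal pairing to make a short turn inside $C\cup C'$. The remedy must be calibrated to where $A$ sits on $\partial C'$ and to the excess $|A|-\frac14 l$ (the paper reflects the pairing across subpaths $\alpha_\pm\subset A$ of length $\lceil |A|-\frac14 l\rceil$), and this data is simply not a function of the word labelling $C'$; it depends on the neighbouring cell in $X$. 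Worse, any relator-only deviation from the antipodal pairing creates pairs of related midpoints at distance $<\frac12 l$ inside a single cell, which makes self-intersections easier, not harder. The paper's actual mechanism is non-local at the level of cells: one builds a $G$--equivariant \emph{tile} assignment by repeatedly gluing cells/tiles whose overlap exceeds $\frac14 l$ (Construction~\ref{constr:main}, which itself needs an isoperimetric argument to rule out gluing a tile to its own translate), defines \emph{balanced} tile-wall structures on these tiles (Proposition~\ref{prop:wallconstruction}, with the balance $\mathrm{Bal}(T)=\frac14(|T|+1)l-\mathrm{Cancel}(T)$ as the quantitative lower bound on distances between related midpoints), and then runs the embedded-tree argument through a decomposition-and-tightening machinery for hypergraph segments together with a quasi-isometric-embedding (local-to-global) step. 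None of this is supplied or even outlined concretely in your plan, and the bound $\frac{5}{24}$ is exactly the point where tile sizes stay $\leq 5$ (Remark~\ref{rem-tile-size-bound}); it does not ``emerge by optimizing'' an unspecified isoperimetric constant.

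Two further points where your plan diverges from what actually has to be done. First, the separation/essentiality step is not the Ollivier--Wise ray argument in this paper: one produces a relator with two antipodal occurrences of the same letter, giving a group element that swaps the two sides of a wall, and then rules out non-essential walls by a Stallings-plus-Euler-characteristic contradiction; cocompactness of the wall stabiliser is immediate from $G$--equivariance of the $\sim_C$, not from a ``finite-to-one wall-to-diagram map.'' Second, the failure of property $\mathrm{(T)}$ is by Niblo--Roller (cited as \cite{NR}), not Niblo--Reeves; that part is indeed routine once the nontrivial action exists. As it stands, your text is a research plan whose central construction and central estimate are missing, and whose one committed design choice (relator-local pairings) points away from a construction that can reach $\frac{5}{24}$.
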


The CAT(0) cube complex in Theorem~\ref{thm:main} can be chosen
to be finite-dimensional and cocompact, see
Remark~\ref{rem:finite dim}.

We assumed $l$ to be even only to have an easy proof of
Lemma~\ref{lem:transitive}, which would have been otherwise
slightly more difficult and would also require $d>\frac{1}{8}$
(to which we actually could have restricted). For $l$ odd one
subdivides the edges of $X$ into two and replaces $l$
with~$2l$.

\medskip

\noindent \textbf{Strategy outline.} In the remaining part of
the Introduction we outline our strategy for the proof of
Theorem~\ref{thm:main}. Our starting point is
Figure~\ref{fig:colliding-walls}, left, (Figure~21 from
\cite{OW}) explaining why at $d>\frac{1}{5}$ a hypergraph
obtained from the antipodal relation in $2$--cells
self-intersects. The principal reason for the self-intersection
to appear is the sharp turn that the hypergraph makes in the
union $T$ of the two $2$--cells $C,C'$ on the left. As $d$
approaches $\frac{1}{4}$, the possible length $|A|$ of the
common path $A=C\cap C'$ approaches $\frac{1}{2}l$. Hence the
distance in $T^{(1)}$ between the endpoints of a hypergraph
segment tends to $0$, see Figure~\ref{fig:colliding-walls},
right. This is bad, since can easily be turned into a
self-intersection by adding a third $2$--cell to $T$ as in
Figure~\ref{fig:colliding-walls}, left.
\begin{figure}
	\centering
	\def\svgwidth{0.9\columnwidth}
	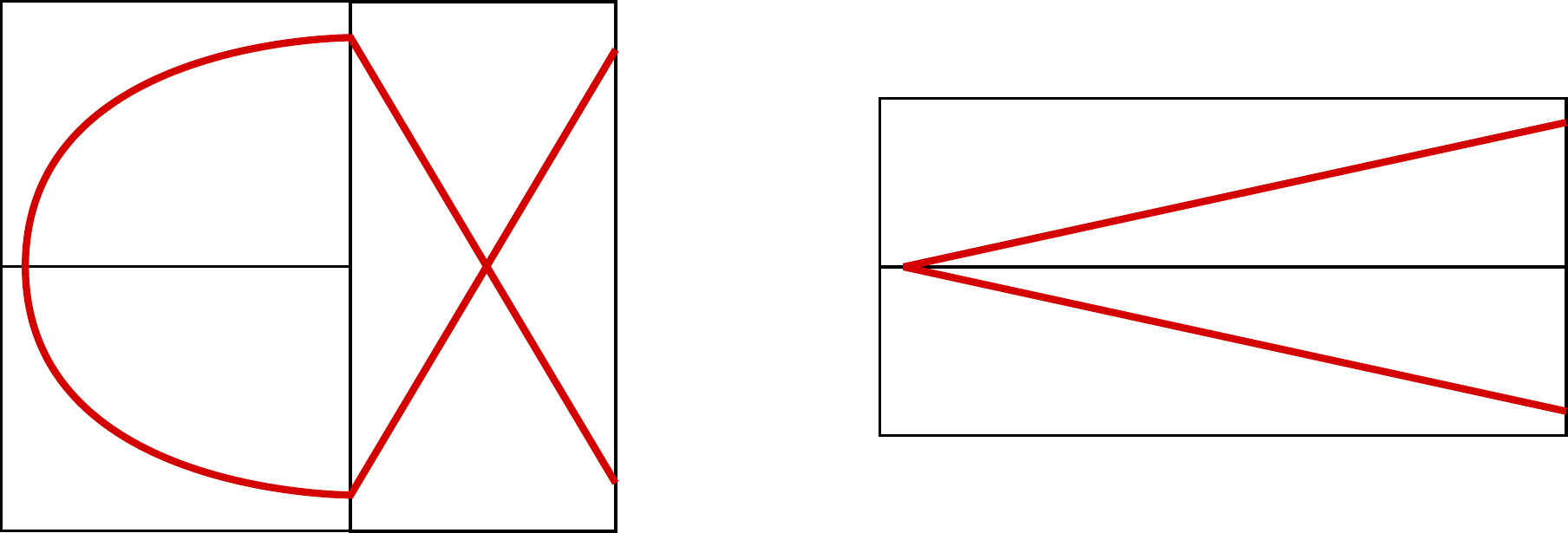
	\caption{}
	\label{fig:colliding-walls}
\end{figure}

To remedy this, whenever $|A|>\frac{1}{4}l$ we replace the
antipodal relation in one of the two $2$--cells of $T$, say in
$C'$, by the relation $\sim$ described in Figure~2.
Specifically, we consider two subpaths $\alpha_+,\alpha_-$ of
$A$ of length $\lceil|A|-\frac{1}{4}l\rceil$ containing the endpoints of
$A$. Let $s_\pm$ be the symmetry of $\alpha_\pm$ exchanging its
endpoints. If $x,y$ are antipodal edge midpoints of $C'$ and
$y$ lies in the interior of $\alpha_\pm$, then we put $x\sim
s_\pm(y)$, otherwise let $x\sim y$.
\begin{figure}
	\centering
	\def\svgwidth{0.4\columnwidth}
	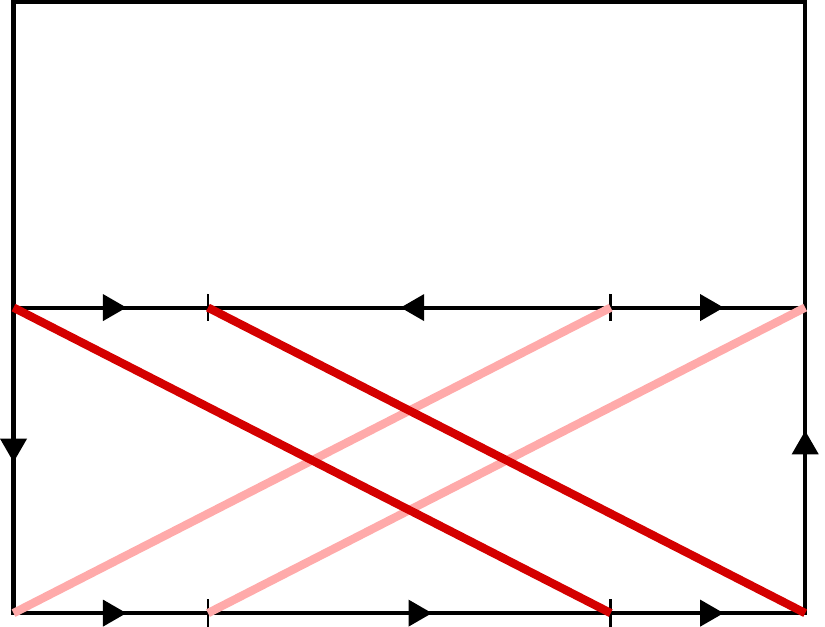
	\caption{}
	\label{fig:glue-mix}
\end{figure}

This relation has the following advantage over the antipodal
one. Let $x,x'$ be in the same hypergraph of $T$. We claim that
the distance between $x$ and $x'$ in $T^{(1)}$ is bounded below
by
$$\mathrm{Bal}(T)=\frac{1}{4}(|T|+1)l -|A|,$$
where $|T|=2$ is the number of $2$--cells in $T$. This value
will be called the \emph{balance} of $T$, and is bounded from
above by $\frac12 l$.  Notice that for $d<\frac{1}{4}$ it is
also bounded below by $\frac{1}{4}l$, since $|A|<\frac{1}{2}l$.

To justify the claim, there are four cases to consider. If
$x,x'$ are antipodal in $C$ or $C'$, there is nothing to prove.
If $x,x'$ are both in $C'$ and $x'=s_\pm(y)$, where $y$ is
antipodal to $x$, then since the distance $|x',y|$ satisfies
$|x',y|<|A|-\frac{1}{4}l$, it suffices to use the triangle
inequality. Otherwise $x\in C-C',\ x'\in C'-C$, and the
hypergraph segment $xx'$ crosses $A$ in an edge midpoint $y$
such that: either $y$ is antipodal to both $x$ and $x'$ and
lies outside of the interiors of $\alpha_\pm$; or $y$ lies in
the interior of, say, $\alpha_+$ and is antipodal to $x$ in $C$
while $s_+(y)$ is antipodal to $x'$ in $C'$. In the first
situation $y$ is at distance $\leq\frac{1}{4}l$ from the
endpoints of $A$, so that the distance between $x,x'$ is $\geq
\frac{1}{2}l+\frac{1}{2}l-2(\frac{1}{4}l)$, as desired. In the
second situation the sum of the distances of $y,s_+(y)$ from an
endpoint of $A$ is $\leq
2|A|-(|A|-\frac{1}{4}l)=\frac{1}{4}l+|A|$, so that the distance
between $x,x'$ is $\geq
\frac{1}{2}l+\frac{1}{2}l-(\frac{1}{4}l+|A|)$, which finishes
the proof of the claim.

Note that this is the best estimate we can hope for: consider
the edge midpoint $y\in \alpha_+$ nearly at the endpoint of $A$
and suppose that we attempt to move $x'\sim y$ in $C'$, which
is the antipode of $s_+(y)$. Then either we decrease the
distance between $x'$ and $y$ or we decrease the distance
between $x'$ and the antipode $x$ of $y$ in $C$, and both these
distances were nearly equal $\mathrm{Bal}(T)$.

We call $T$ a \emph{tile} and the relation on the edge
midpoints of $T$ induced by the relations in $C$ and $C'$ a
\emph{balanced tile-wall structure}. We iterate this
construction: whenever two tiles, or a tile and a $2$--cell
have large overlap, we change the antipodal relation into one
that makes the tile-wall structure balanced. The tiles in $X$
will not share $2$--cells, except for very particular
configurations, and will be used instead of $2$--cells in van
Kampen diagrams. One way to think about this is that since we
do not see negative curvature on the original presentation
level, we zoom out and look at tiles instead of $2$--cells,
where we are already able to define walls with negative
curvature behaviour.

There are two technical problems that one would need to
overcome to extend the proof of Theorem~\ref{thm:main} to all
densities $<\frac{1}{4}$. First of all, one needs to understand
the combinatorial complication coming from tiles sharing
$2$--cells (generalisation of assertions (i)---(iii) in
Proposition~\ref{prop:wallconstruction}). Secondly, even for a
tile disjoint from all other ones (as in Step~1 of
Construction~\ref{constr:main}), but glued of two tiles of size
$\geq 3$, we do not not know in general how to define a
balanced tile-wall structure, i.e.\ how to extend Part~1 of
Proposition~\ref{prop:wallconstruction}.

\medskip

\noindent \textbf{Organisation.} In
Section~\ref{sec:isoperimetric} we discuss the isoperimetric
inequality for random groups. In Section~\ref{sec:tiles} we
define tiles, and we equip them with balanced tile-wall
structures in Section~\ref{sec:tile-walls}. We then show that
induced hypergraphs are embedded trees
(Section~\ref{sec:walls}), which are quasi-isometrically
embedded (Section~\ref{sec:quasi-convexity}), and we conclude
with the proof of Theorem~\ref{thm:main}.

\medskip

\noindent \textbf{Acknowledgement.}
We thank the referee for his or her detailed and helpful comments.
The second author was partially supported by the Foundation for Polish Science, National Science Centre DEC-2012/06/A/ST1/00259 and NSERC.

\section{Isoperimetric inequality}
\label{sec:isoperimetric}

In this section we recall Ollivier's isoperimetric inequality
for disc diagrams in random groups, extended to uniformly
bounded non-planar complexes by Odrzyg\'o\'zd\'z.

We always assume that $2$--cells in our complexes are $l$--gons
with $l$ even. A \emph{disc diagram} $D$ is a contractible
$2$--complex with a fixed embedding in $\R^2$. Its
\emph{boundary path} $\partial D$ is the attaching map of the
cell at infinity.

Suppose $Y$ is a $2$--complex, not necessarily a disc diagram.
Let the \emph{size} $|Y|$ denote the number of $2$--cells of
$Y$. If $A$ is a graph, and is not treated as a disc diagram of
size $0$, we denote by $|A|$ the number of $1$--cells in $A$.
The \emph{cancellation} of~$Y$~is
$$\mathrm{Cancel}(Y) = \sum_{e\ \mathrm{an}\ \mathrm{edge}\  \mathrm{of}\ Y} (\deg(e)-1),$$
where $\deg(e)$ is the number of times that $e$ appears as the image of an edge of the attaching map of a $2$--cell of $Y$.
Observe that if $D$ is a disc diagram, $\mathrm{Cancel}(D)$ counts the number of internal edges of $D$.

\begin{rem}
\label{rem:cancel}
Suppose that $Y_i\subset X$ are subcomplexes that are closures of their $2$--cells, and that they do not share $2$--cells. Then
$$\mathrm{Cancel}\big(\bigcup_i Y_i\big)\geq\sum_i\Big(\mathrm{Cancel}(Y_i)+\frac{1}{2}|Y_i\cap \bigcup_{j\neq i}Y_j|\Big).$$
Equality holds if and only if no triple of $Y_i$ shares an edge.
\end{rem}

We say that $Y$ is \emph{fulfilled} by a set of relators $R$ if
there is a combinatorial map from~$Y$ to the presentation
complex $X/G$ that is locally injective around edges (but not
necessarily around vertices). In particular, any subcomplex of
the Cayley complex $X$ is fulfilled by~$R$. Since $X$ is simply
connected, for any closed path $\alpha$ in $X^{(1)}$ there
exists a disc diagram $D$ with a map $D\rightarrow X$ such that
$\partial D$ maps to $\alpha$. Moreover, by cancelling some
$2$--cells we can assume that $D$ is fulfilled by $R$. We say
that $D\rightarrow X$ is a \emph{disc diagram for $\alpha$}.

\begin{thm}[{\cite[Thm 2]{O-iso}}]
\label{thm:isoperimetry}
For each $\varepsilon>0$ w.o.p.\ there is no disc diagram $D$ fulfilling $R$ and satisfying
$$\mathrm{Cancel}(D)>(d+\varepsilon)|D|l.$$
(Equivalently, every disc diagram $D$ fulfilling $R$ has $|\partial D| \geq (1-2d-2\epsilon)|D|l$.)
\end{thm}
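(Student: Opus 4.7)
The plan is to apply the first moment method (Markov's inequality) to the total number of disc diagrams fulfilled by $R$ that violate the claimed inequality. Fix $\varepsilon>0$. For each pair $(n,k)$ with $k > (d+\varepsilon)nl$, let $N_{n,k}$ denote the number of disc diagrams $D$ fulfilled by $R$ with $|D|=n$ and $\mathrm{Cancel}(D)=k$. It suffices to show that $\mathbb{E}[\sum_{n,k} N_{n,k}] \to 0$ as $l\to\infty$, where the expectation is over the random choice of $R$.

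The first step is to parametrise the count: a diagram $D$ fulfilled by $R$ is determined by (a) an abstract planar combinatorial type $\mathcal{T}$ with $n$ $l$-gonal faces and $k$ internal edges, (b) an assignment of one relator from $R$ to each $2$-cell, and (c) for each cell, a base vertex and orientation on its boundary that tells how the relator label is read off. Choices of (b) give at most $|R|^n = (2m-1)^{dln}$ options, choices of (c) give at most $(2l)^n$, and choices of (a) are bounded by some $\kappa_l^n$ where $\kappa_l$ grows at most polynomially in $l$ (the usual Euler-characteristic argument controls the number of abstract planar $2$-complexes with prescribed face degrees and $n$ faces). This reduces the problem to estimating, for a fixed abstract type and fixed relator-assignment/placement data, the probability that the resulting letter assignments are consistent across all $k$ internal edges.

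The key probabilistic step is to show that this consistency probability is at most $(2m-1)^{-k(1-o(1))}$. Each internal edge imposes one letter-matching constraint between two positions inside (cyclic conjugates of) relators chosen from $R$. Since $R$ is an i.i.d.\ uniform sample of cyclically reduced words of length $l$, individual letters are asymptotically uniform on $S\cup S^{-1}$ up to the cyclic-reduction correction (which is negligible for $l$ large). A mild technicality is that constraints involving two positions within the \emph{same} relator (when a cell is adjacent to itself) must be treated jointly; however this only reduces the effective number of independent positions and still yields the bound $(2m-1)^{-k}$ up to a sub-exponential factor.

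Combining these estimates,
\[
\mathbb{E}[N_{n,k}] \;\leq\; \kappa_l^n (2l)^n (2m-1)^{dln} (2m-1)^{-k(1-o(1))} \;=\; \bigl(\kappa_l\cdot 2l\bigr)^n (2m-1)^{dln-k(1-o(1))}.
\]
For $k>(d+\varepsilon)nl$ the exponent of $(2m-1)$ is at most $-\tfrac{\varepsilon}{2}nl$ once $l$ is large, so each term is bounded by $\bigl(\kappa_l \cdot 2l \cdot (2m-1)^{-\varepsilon l/2}\bigr)^n$, whose common ratio tends to $0$ as $l\to\infty$. Summing the geometric series over $n\geq 1$ and then over the relevant range of $k\leq nl$ (only polynomially many values) yields $\mathbb{E}[\sum_{n,k} N_{n,k}] \to 0$, and Markov's inequality finishes the proof. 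The main obstacle I expect is the bookkeeping in the third step: getting a clean $(2m-1)^{-k}$ bound on the consistency probability when relators may repeat and when several edges of a single cell are internal, so that the constraints are not mutually independent; this is overcome by grouping the constraints according to which relator-copy they involve and computing probabilities one relator at a time, at the cost of the harmless $(1-o(1))$ factor in the exponent.
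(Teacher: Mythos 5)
The paper itself offers no proof of this statement: it is quoted from Ollivier \cite{O-iso}, so your proposal has to be measured against Ollivier's argument. Your overall skeleton --- first moment over abstract planar types (polynomially many in $l$ per face), times relator assignments and placements, against a fulfillability probability --- is indeed the standard frame, and your estimate is essentially correct in the special case where the faces of $D$ carry pairwise distinct relators.

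The genuine gap is the key probabilistic step. The claimed uniform bound, that for every fixed abstract type and every fixed relator assignment the consistency probability is at most $(2m-1)^{-k(1-o(1))}$, is false when the same relator labels several faces, and the error is exponential in $l$, not sub-exponential. Concretely, take $|D|=2$ with both faces bearing the same relator $r$, glued along a path of even length $L$ so that the identifications match position $i+t$ of $r$ in one face with position $i+L+1-t$ of $r$ in the other ($t=1,\dots,L$). The constraints from the edges at parameters $t$ and $L+1-t$ are literally the same letter equation $r_{i+t}=r_{i+L+1-t}^{-1}$, the diagram is still fulfilled in your sense (locally injective around edges, since $i+t\neq i+L+1-t$), and fulfillability amounts to an antipalindrome condition on a subword of $r$: the probability is about $(2m-1)^{-L/2}$, not $(2m-1)^{-L(1-o(1))}$. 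With $k=L$ slightly above $2dl$, the true contribution of these same-relator assignments to $\mathbb{E}[N_{2,L}]$ is about $(2m-1)^{dl-L/2}\approx (2m-1)^{-\varepsilon l}$, which exceeds your displayed bound $(2m-1)^{2dl-L(1-o(1))}\approx(2m-1)^{-2\varepsilon l+o(l)}$; so the stated expectation inequality is false, even though the theorem's conclusion survives here because such assignments number only $|R|$, not $|R|^2$. This is exactly the point: redundant constraints arise precisely when relators repeat, the loss in the probability exponent can be of order $k$, and it must be shown to be compensated by the drop from $|R|^n$ to $|R|^{\#\{\text{distinct relators}\}}$. Your proposed fix --- ``grouping the constraints relator by relator at the cost of a harmless $(1-o(1))$ factor'' --- is precisely the step that does not work; carrying out the multiplicity-dependent bookkeeping (a probability bound that depends on how each relator is reused, coupled with counting only distinct relators, and verifying the compensation always suffices) is the actual technical content of the proof of \cite{O-iso}*{Thm 2}, and it is absent from your proposal.
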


We deduce the following:

\begin{lem}
\label{lem:no_short_loop}
Let $d<\frac{1}{4}$. Then w.o.p.\ there is no embedded closed path in $X^{(1)}$ of length $<l$.
\end{lem}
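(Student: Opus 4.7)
The plan is to argue by contradiction using Theorem~\ref{thm:isoperimetry}. Suppose $\alpha$ is an embedded closed path in $X^{(1)}$ of length $n$ with $0 < n < l$. Fill $\alpha$ with a disc diagram $D \to X$ fulfilled by $R$, taken reduced so that no spurs remain on $\partial D$. Since $\alpha$ is embedded, the boundary map $\partial D \to \alpha$ cannot fold onto itself, and therefore $|\partial D| = n < l$.

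I would then split into cases on $|D|$. If $|D| = 0$, then $D$ is a tree whose boundary traverses every edge twice, which is incompatible with mapping onto an embedded loop of positive length. If $|D| = 1$, then $\partial D$ is the boundary of a single $l$-gon, so $|\partial D| = l$, contradicting $|\partial D| < l$. Finally, if $|D| \geq 2$, I pick $\varepsilon > 0$ with $d + \varepsilon < \frac{1}{4}$, so that $1 - 2d - 2\varepsilon > \frac{1}{2}$, and apply Theorem~\ref{thm:isoperimetry} to obtain w.o.p.\ that
\[ |\partial D| \geq (1 - 2d - 2\varepsilon)|D|l > l, \]
again contradicting $|\partial D| < l$.

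The point that requires most care is the first step: producing a disc diagram whose boundary faithfully traces $\alpha$, giving $|\partial D| = n$. This is where embeddedness of $\alpha$ is essential, since any back-track on $\partial D$ after reduction would correspond to $\alpha$ repeating an edge, which an embedded loop cannot do. Once this is in place, the rest is a short three-case check, and the threshold $d < \frac{1}{4}$ is exactly the hypothesis forcing the case $|D| \geq 2$ to fail.
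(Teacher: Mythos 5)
Your proof is correct and follows essentially the same route as the paper: fill the loop with a disc diagram fulfilling $R$, rule out $|D|\leq 1$, and for $|D|\geq 2$ contradict Theorem~\ref{thm:isoperimetry} (you use the boundary-length form $|\partial D|\geq(1-2d-2\varepsilon)|D|l$ stated there, the paper the equivalent $\mathrm{Cancel}$ form). The only cosmetic difference is that $|\partial D|=|\alpha|$ holds by the definition of a disc diagram for $\alpha$; embeddedness is really only needed to exclude backtracking, i.e.\ the case $|D|=0$, which you also handle.
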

\begin{proof}
Otherwise, let $D\rightarrow X$ be a disc diagram for that closed path.
Case $|D|=1$ is not possible. Otherwise $|D|\geq 2$, and hence
$$\mathrm{Cancel}(D)= \frac{1}{2}(|D|l-|\partial D|)>\frac{1}{2}|D|l-\frac{1}{2}l\geq \frac{1}{4}|D|l,$$
which contradicts Theorem~\ref{thm:isoperimetry}.
\end{proof}

Lemma~\ref{lem:no_short_loop} immediately implies the following corollaries.

\begin{cor}
\label{cor:cells_embed}
Let $d<\frac{1}{4}$. Then w.o.p.\ the boundary paths of all $2$--cells embed in~$X$.
\end{cor}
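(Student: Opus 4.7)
My plan is to reduce the failure of embedding for $\partial C$ to the existence of a short embedded closed path in $X^{(1)}$, and then invoke Lemma~\ref{lem:no_short_loop}. I would argue by contradiction: assume some $2$--cell $C$ in $X$ has a boundary attaching map $\partial C \to X^{(1)}$ that is not injective. I want to extract from this failure an embedded closed path of length strictly less than $l$.

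Writing the boundary walk as $v_0,v_1,\ldots,v_l=v_0$, non-injectivity of the attaching map produces indices $0\leq i<j\leq l$ with $v_i=v_j$ and $(i,j)\neq(0,l)$. The closed subwalk $v_i,v_{i+1},\ldots,v_j$ has length $j-i<l$ and is of positive length. Moreover, because the relator defining $C$ is cyclically reduced, this subwalk inherits a no-backtracking property, so it is in particular not a constant walk after reduction.

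The next step is to pass from this (possibly non-embedded) closed walk to an embedded one of no greater length. This is a standard graph-theoretic observation: any closed walk of positive length in a graph, if it visits some vertex more than necessary, can be split at a repeated vertex into two strictly shorter closed walks, at least one of which has positive length; iterating this splitting terminates at an embedded closed path of positive length. The resulting embedded closed path in $X^{(1)}$ has length $<l$, which contradicts Lemma~\ref{lem:no_short_loop}.

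I do not anticipate a real obstacle here; the corollary is flagged as an immediate consequence, and the only thing to check carefully is that the iterative vertex-splitting procedure produces a genuinely embedded loop of positive length, which follows by induction on walk length. One small subtlety worth spelling out is that ``embedding of $\partial C$'' should be interpreted as injectivity of the attaching map on both vertices and edges, but a failure of edge injectivity forces a coincidence of vertices and hence is already covered by the argument above.
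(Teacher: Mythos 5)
Your reduction is exactly the intended one: the paper treats this corollary as an immediate consequence of Lemma~\ref{lem:no_short_loop}, precisely by turning a repeated vertex on $\partial C$ into an embedded closed path in $X^{(1)}$ of length $<l$. The only step to phrase carefully is the extraction of the embedded loop: pick the repeated pair $v_i=v_j$ with $j-i$ minimal (or always retain the contiguous subwalk when splitting), so that the no-backtracking property coming from cyclic reducedness rules out terminating at a degenerate back-and-forth walk rather than a genuinely embedded cycle.
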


\begin{cor}
\label{cor:short_geodesic}
Let $d<\frac{1}{4}$. Then w.o.p.\ every path $\alpha$ embedded in $X^{(1)}$ of length $\leq\frac{1}{2}l$ is geodesic in $X^{(1)}$.
\end{cor}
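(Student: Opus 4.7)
The plan is to argue by contradiction, reducing the existence of a non-geodesic embedded short path to the existence of a short embedded closed path, which is forbidden by Lemma~\ref{lem:no_short_loop}.

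Suppose $\alpha$ is an embedded path in $X^{(1)}$ with $|\alpha| \leq \tfrac{1}{2}l$ that is not geodesic. Among all non-geodesic subpaths of $\alpha$, I would select one of minimal length, call it $\alpha'$, with endpoints $p',q'$, and let $\beta'$ be a geodesic from $p'$ to $q'$, so $|\beta'|<|\alpha'|$. Since $\alpha'$ is a subpath of the embedded path $\alpha$, it is embedded; geodesics are automatically embedded, so $\beta'$ is embedded too.

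The key observation is that $\alpha'$ and $\beta'$ can share only their endpoints. Indeed, if they shared an interior vertex $v$, the two subpaths of $\alpha'$ from $p'$ to $v$ and from $v$ to $q'$ would each be strictly shorter than $\alpha'$, hence geodesic by the minimality of $\alpha'$. This would give
\[
|\alpha'|=d(p',v)+d(v,q')\leq |\beta'|<|\alpha'|,
\]
a contradiction.

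Consequently $\alpha'\cup (\beta')^{-1}$ is an embedded closed path in $X^{(1)}$ of length $|\alpha'|+|\beta'|<2|\alpha'|\leq 2\cdot\tfrac{1}{2}l=l$, contradicting Lemma~\ref{lem:no_short_loop}. The only subtle point is the minimality argument that forces $\alpha'$ and $\beta'$ to meet only at their endpoints; everything else is just triangle inequalities and the bound $|\alpha|\leq\tfrac{1}{2}l$. There is no real obstacle.
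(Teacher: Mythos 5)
Your proof is correct and is exactly the intended route: the paper states this corollary as an immediate consequence of Lemma~\ref{lem:no_short_loop}, and your argument just makes that implication explicit. The minimality trick that forces the non-geodesic subpath and its geodesic to meet only at their endpoints is a fine way to extract the embedded closed path of length $<l$ needed to apply the lemma.
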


\begin{cor}
\label{cor:no_short_image}
Let $d<\frac{1}{4}$. Then w.o.p.\ there is no immersed closed path $\alpha\colon I\rightarrow X^{(1)}$ with $|\alpha(I)|<l$.
\end{cor}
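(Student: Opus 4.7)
The strategy is to reduce Corollary~\ref{cor:no_short_image} to Lemma~\ref{lem:no_short_loop} by extracting an embedded cycle inside $\alpha(I)$. Suppose, aiming for a contradiction, that $\alpha\colon I\to X^{(1)}$ is an immersed closed path with $|\alpha(I)|<l$.

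The first step is to observe that, after corestricting to its image, the map $\alpha\colon I\to\alpha(I)$ is still a combinatorial immersion of graphs, since the local injectivity of $\alpha$ at a point of $I$ depends only on a neighbourhood in $I$ and in $X^{(1)}$, and $\alpha(I)$ inherits the edge structure from $X^{(1)}$. An immersion of graphs is injective on fundamental groups (for instance by the standard lift to universal covers), so the image of the generator of $\pi_1(I)$ is nontrivial in $\pi_1(\alpha(I))$. Consequently $\alpha(I)$ is not a tree.

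The second step is to choose an embedded cycle $C$ in $\alpha(I)$: any finite graph of positive first Betti number contains one (e.g.\ pick a spanning tree and any edge outside it, and take the fundamental cycle). Then $C\subset \alpha(I)\subset X^{(1)}$ is an embedded closed path in $X^{(1)}$ with $|C|\leq|\alpha(I)|<l$, contradicting Lemma~\ref{lem:no_short_loop}.

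I do not anticipate any genuine obstacle in this argument; the only point requiring care is verifying that local injectivity is preserved when we corestrict $\alpha$ to $\alpha(I)$, which is immediate from unwinding the definition, and recalling that immersions of graphs are $\pi_1$-injective.
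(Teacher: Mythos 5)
Your argument is correct and is essentially the paper's own (the paper simply states that Lemma~\ref{lem:no_short_loop} ``immediately implies'' this corollary): an immersed closed path cannot have a tree as its image, so $\alpha(I)$ contains an embedded cycle of length at most $|\alpha(I)|<l$, contradicting Lemma~\ref{lem:no_short_loop}. One cosmetic point: since $\pi_1(I)$ of an interval is trivial, you should phrase the $\pi_1$--injectivity step for the circle obtained by identifying the endpoints of $I$ (or just note directly that a nonconstant locally injective closed path in a tree is impossible).
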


Another consequence of Theorem~\ref{thm:isoperimetry} is the
following result of Ollivier and Wise, whose proof we include
as a warm-up.

\begin{lem}[{\cite[Cor 1.11]{OW}}]
\label{lem:cells_intersection_connected}
Let $d<\frac{1}{4}$. Then w.o.p.\ for all intersecting $2$--cells $C,C'$ of $X$ we have that $C\cap C'$ is
connected.
\end{lem}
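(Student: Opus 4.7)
I would argue by contradiction. Suppose $C\cap C'$ has at least two components, pick two of them $A_1$ and $A_2$, and decompose $\partial C$ cyclically as $A_1\beta_1A_2\beta_2$ and $\partial C'$ as $A_1\gamma_1A_2\gamma_2$ (absorbing any further components of $C\cap C'$ into these arcs), oriented so that $\alpha_i:=\beta_i\cdot\gamma_i^{-1}$ is a closed loop in $X^{(1)}$ for $i=1,2$. By Corollary~\ref{cor:cells_embed} the paths $\beta_i,\gamma_i$ are reduced, and the maximality of $A_1,A_2$ as components of $C\cap C'$ prevents any edge of $\beta_i$ incident to $A_1$ or $A_2$ from lying on $\partial C'$; hence $\alpha_i$ is an immersed non-trivial closed path in $X^{(1)}$. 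Since $|\alpha_1|+|\alpha_2|=2(l-|A_1|-|A_2|)$, after relabeling I may assume $|\alpha_1|\leq l-|A_1|-|A_2|\leq l$.

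The next step is to fill $\alpha_1$ by a minimal disc diagram $D$ fulfilled by $R$. Theorem~\ref{thm:isoperimetry} applied with $\varepsilon<\frac{1}{4}-d$ will give
$$|D|\leq \frac{|\alpha_1|}{(1-2d-2\varepsilon)l}<2,$$
while immersion and non-triviality of $\alpha_1$ exclude $|D|=0$, so $|D|=1$. Since $D$ is fulfilled, the boundary of its single $2$--cell maps to $\alpha_1$ injectively on edges, whence $|\alpha_1|=l$, which combined with the bound above forces $|A_1|=|A_2|=0$ (both $A_i$ are isolated vertices) and $|\beta_1|=|\gamma_1|=l/2$. Write $C''$ for the $2$--cell of $X$ that the cell of $D$ maps to, so $\partial C''=\beta_1\gamma_1^{-1}$.

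The main obstacle will be to handle the three possibilities for $C''$. If $C''=C$, comparing with $\partial C=\beta_1\beta_2$ yields $\beta_2=\gamma_1^{-1}$, placing a length-$l/2$ arc of edges in $\partial C\cap\partial C'\subseteq C\cap C'$, contradicting the vertex-only assumption $|A_1|=|A_2|=0$; the case $C''=C'$ is symmetric. Otherwise $C''\notin\{C,C'\}$, and I would glue $C$ and $C''$ along $\beta_1$ to form a disc diagram of size $2$ with boundary of length $2l-2|\beta_1|=l$; Theorem~\ref{thm:isoperimetry} then demands $l\geq (1-2d-2\varepsilon)\cdot 2l$, i.e.\ $d+\varepsilon\geq\frac{1}{4}$, contradicting the choice of $\varepsilon$ and completing the argument.
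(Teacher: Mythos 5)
Your overall route is the same as the paper's: extract from the disconnectedness of $C\cap C'$ a short closed path made of one arc of $\partial C$ and one arc of $\partial C'$, fill it by a diagram that Theorem~\ref{thm:isoperimetry} forces to be a single $2$--cell $C''$, and then contradict Theorem~\ref{thm:isoperimetry} again with a two-cell diagram. But there is a genuine gap where you assert that $|\alpha_1|=l$ and $|A_1|=|A_2|=0$ ``forces $|\beta_1|=|\gamma_1|=l/2$''. It does not: all you know is $|\beta_1|+|\gamma_1|=l$, and nothing in your set-up rules out, say, $|\beta_1|=l/3$, $|\gamma_1|=2l/3$ (with $|\beta_2|=2l/3$, $|\gamma_2|=l/3$, so that $|\alpha_1|=|\alpha_2|=l$ still holds). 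Your final step uses the equality essentially: gluing $C$ and $C''$ along $\beta_1$ gives $\mathrm{Cancel}=|\beta_1|$, and if $|\beta_1|$ is small this need not exceed $(d+\varepsilon)\cdot 2l$, so no contradiction arises. The repair is a one-liner: since $|\beta_1|+|\gamma_1|=l$, one of the two arcs has length $\geq \frac12 l$, and you glue $C''$ to $C$ along $\beta_1$ or to $C'$ along $\gamma_1$ accordingly (you have already excluded $C''=C$ and $C''=C'$, so either gluing is legitimate). The paper sidesteps this issue by choosing from the start arcs of length $\leq\frac12 l$ in each of $\partial C$ and $\partial C'$, so that the lower bound $l$ on the loop length forces both arcs to have length exactly $\frac12 l$.

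A second, smaller point: your decomposition assumes you can orient things so that both $\beta_1\gamma_1^{-1}$ and $\beta_2\gamma_2^{-1}$ are closed, i.e.\ that the endpoints of $A_1,A_2$ are matched in the same pattern along $\partial C$ and along $\partial C'$. There is also the ``crossed'' configuration (the union $C\cup C'$ is then a M\"obius-band-like complex), in which no $\beta_i\gamma_j^{-1}$ closes up and any loop of this kind must additionally traverse one of the $A_i$; this cannot be dismissed by a choice of orientation, and since you only deduce $|A_i|=0$ afterwards, it needs a sentence of its own (the same filling argument works for the shorter of the two available loops, whose lengths sum to at most $2l$). On the positive side, working with immersed rather than embedded loops, and treating the possibilities $C''=C$ and $C''=C'$ explicitly, are points where your write-up is more careful than the paper's own sketch.
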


\begin{proof}
If $C\cap C'$ is not connected, then there is in $C\cup C'$ a
homotopically non-trivial embedded closed path $\alpha\cup\alpha'$ of length $\leq l$ with $\alpha$ in $C$ and $\alpha'$ in $C'$. This
contradicts Lemma~\ref{lem:no_short_loop} unless $|\alpha|=|\alpha'|=\frac{1}{2}l$. By Theorem~\ref{thm:isoperimetry}, as in the proof
of Lemma~\ref{lem:no_short_loop}, this shows that
$\alpha\cup\alpha'$ bounds a disc diagram $D$ of size $|D|=1$,
hence consisting of a single $2$--cell $C''$. This contradicts Theorem~\ref{thm:isoperimetry}
with $d<\frac{1}{4}$ for the diagram $C\cup C''$.
\end{proof}

We close with the following variant of Theorem~\ref{thm:isoperimetry} for uniformly bounded non-planar complexes.
We say that a $2$--complex $Y$ is \emph{$(K,K')$--bounded}
if $|Y|\leq K$ and $Y$ is obtained from the disjoint union of its $2$--cells by
gluing them along $\leq K'$ subpaths of their boundary paths.
Note that for $d<\frac{1}{4}$ by Corollary~\ref{cor:cells_embed} and
Lemma~\ref{lem:cells_intersection_connected}, if $Y\subset X$,
then $|Y|\leq K$ implies that $Y$ is
$\big(K,\frac{1}{2}K(K-1)\big)$--bounded.

\begin{prop}[see {\cite[Thm 1.5]{Od}}]
\label{prop:isoperimetry}
For each $K,K',\ \varepsilon>0$ w.o.p.\ there is no $(K,K')$--bounded $2$--complex~$Y$
fulfilling $R$ and satisfying
$$\mathrm{Cancel}(Y)>(d+\varepsilon)|Y|l.$$
\end{prop}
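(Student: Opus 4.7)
The plan is to extend the first-moment argument underlying Theorem~\ref{thm:isoperimetry} to non-planar configurations, exploiting the fact that once $K$ and $K'$ are fixed, the combinatorial types of $(K,K')$--bounded complexes form a family of size polynomial in $l$. I would enumerate these types, bound the expected number of complexes of each type fulfilling $R$ with large cancellation, and conclude by Markov's inequality together with a union bound over types.

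The combinatorial type of a $(K,K')$--bounded complex $Y$ with $N\le K$ $2$--cells records the $N$ abstract $l$--gons that form its $2$--cells, a list of $\le K'$ gluing subpaths (each specified by a length and by the sub-arcs of the $l$--gon boundaries identified to form it, with starting vertex and orientation on each), and the combinatorial identification pattern among these sub-arcs. For fixed $K, K'$ this data runs over at most $P(l)$ possibilities, where $P$ is a polynomial whose degree depends only on $K$ and $K'$.

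For a fixed type $\tau$ with total cancellation $c$, the expected number of complexes of type $\tau$ fulfilling $R$ is at most $|R|^{N}(2m-1)^{-c}$, up to a standard sub-exponential correction handling cyclic reducedness of random relators exactly as in the proof of Theorem~\ref{thm:isoperimetry} in \cite{O-iso}. Since $|R|\le(2m-1)^{dl}$ and, by hypothesis, $c>(d+\varepsilon)Nl$, this bound is at most $(2m-1)^{-\varepsilon lN}$, which decays exponentially in $l$. Multiplying by the polynomial factor $P(l)$ and summing over $N\le K$ still yields a total expectation tending to $0$, so by Markov's inequality w.o.p.\ no such $Y$ exists.

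The main technical subtlety, inherited from \cite{O-iso}, is the cyclic reducedness correction, but it contributes only a sub-exponential factor that is absorbed by the above exponential decay. An alternative approach would cut $Y$ along a bounded number of gluing subpaths to reduce it to a disc diagram and then invoke Theorem~\ref{thm:isoperimetry} directly; however, this loses $O(K'l)$ cancellation, which is harmless only when $N$ is large, and the small-$N$ regime forces a direct enumeration in any case. I would therefore apply the enumeration argument uniformly.
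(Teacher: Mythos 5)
Note first that the paper does not prove this proposition at all: it is quoted from Odrzyg\'o\'zd\'z \cite[Thm 1.5]{Od}, so what you are attempting is essentially a re-proof of that cited theorem. Your skeleton --- enumerate the combinatorial types of $(K,K')$--bounded complexes, observe there are only polynomially many in $l$ for fixed $K,K'$, then run a first-moment/union-bound argument --- is indeed the skeleton of the arguments in \cite{O-iso} and \cite{Od}, and the enumeration step is fine. The problem is that the one estimate carrying all the weight, the per-type bound ``expected number $\leq |R|^{N}(2m-1)^{-c}$'', is asserted rather than proved, and as stated it is not correct.

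That bound treats the $c=\mathrm{Cancel}(Y)$ letter identifications as independent events of probability $(2m-1)^{-1}$ each. This is legitimate only when the $N$ faces carry $N$ distinct relators; your factor $|R|^{N}$ explicitly allows repetitions, and when two faces carry the same relator (or a relator is glued to a shifted copy of itself) the $c$ constraints become equations among the letters of a single random word and may be redundant: cycles in the letter-identification graph are either unsatisfiable or tautological, so the true fulfillment probability is $(2m-1)^{-(c-t)}$ where the number $t$ of redundant constraints can be of order $l$. The extreme case makes the point: two faces bearing the same relator, glued along their entire boundaries with matching positions, has $c=l$ yet is ``fulfilled'' by every relator with probability $1$; it is ruled out only by the local-injectivity condition in the definition of \emph{fulfilled}, which your computation never uses. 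Quantifying the partial redundancies that local injectivity does \emph{not} exclude (a relator agreeing with a cyclic shift of itself along a long subword, several gluing paths between the same pair of faces inducing the same offset, etc.) is exactly the hard part of Ollivier's proof of Theorem~\ref{thm:isoperimetry} and of its non-planar extension in \cite{Od}; the cyclic-reducedness correction you flag as ``the main technical subtlety'' is by comparison a minor bookkeeping issue. So the proposal identifies the right strategy, but the key probabilistic lemma --- that for fulfilled complexes the number of \emph{independent} constraints is at least $c-\varepsilon|Y|l$ once the self-correlations of repeated relators are accounted for (or an equivalent case analysis over such degenerate types) --- is missing, and that lemma is the actual content of the result you are trying to prove.
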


In fact, Odrzyg\'o\'zd\'z proves the following stronger result.
We say that $Y$ has \emph{$L$ fixed paths} if we distinguish $L$ subpaths of the
boundary paths of the $2$--cells in $Y$. We denote their union by $\mathrm{Fix}(Y)$.
A \emph{labelling} of a $2$--complex $Y$ with fixed paths is a combinatorial map from $\mathrm{Fix}(Y)$ to $X^{(1)}/G$.
A \emph{polynomial labelling scheme} is a function assigning to
each $2$--complex $Y$ with fixed paths a set of labellings,
where the cardinality of the set of labellings is bounded by a polynomial in $l$.

\begin{prop}[{\cite[Thm 1.5]{Od}}]
\label{prop:isoperimetry2}
Given a polynomial labelling scheme, for each $K,K',L,\ \varepsilon>0$ w.o.p.\ there is no $(K,K')$--bounded
$2$--complex~$Y$ with $L$ fixed paths fulfilled by $R$ in such a way that the combinatorial
map to $X/G$
restricts on $\mathrm{Fix}(Y)$ to one of the labellings
assigned to $Y$ by the scheme, and satisfying
$$\mathrm{Cancel}(Y)+|\mathrm{Fix}(Y)|>(d+\varepsilon)|Y|l.$$
\end{prop}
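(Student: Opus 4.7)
The plan is to run a first-moment count analogous to the proof of Theorem~\ref{thm:isoperimetry}, treating the labels on $\mathrm{Fix}(Y)$ on the same footing as cancelled edges, and then to apply Markov's inequality. First I would enumerate the data. Since $K, K', L$ are fixed constants, there are only finitely many combinatorial types of $(K,K')$-bounded 2-complex with $L$ distinguished fixed subpaths (a type records the $\leq K'$ gluings among the $\leq K$ cells together with the combinatorial data of the fixed paths), so the union bound over types contributes a constant factor. By hypothesis the scheme produces at most $p(l)$ labellings of $\mathrm{Fix}(Y)$ per type, for some polynomial $p$.

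Next, I would fix a combinatorial type $Y$ and a labelling $\lambda$ of $\mathrm{Fix}(Y)$, and bound the expected number of extensions of $\lambda$ to a labelling of $Y$ fulfilled by $R$. The number of distinct $1$-cells of $Y$ is $|Y|l-\mathrm{Cancel}(Y)$, and $|\mathrm{Fix}(Y)|$ of these already carry labels from $\lambda$, so the number of possible extensions is at most $(2m)^{|Y|l-\mathrm{Cancel}(Y)-|\mathrm{Fix}(Y)|}$. For any such extension, fulfillment requires the $|Y|$ boundary words of the $2$-cells each to lie, up to cyclic rotation and inversion, in the random set $R$ of size $\sim (2m-1)^{dl}$ drawn from the $\sim (2m-1)^l$ cyclically reduced words of length $l$. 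The probability that a given cyclically reduced word lies in $R$ is therefore at most $Cl(2m-1)^{(d-1)l}$ for an absolute constant $C$, and independence of the relators yields that all $|Y|$ conditions hold with probability $\leq (Cl)^{|Y|}(2m-1)^{(d-1)l|Y|}$.

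Multiplying these bounds, the expected number of fulfilments extending $\lambda$ is at most
$$\mathrm{poly}(l)\cdot (2m-1)^{d|Y|l-\mathrm{Cancel}(Y)-|\mathrm{Fix}(Y)|}.$$
The hypothesis $\mathrm{Cancel}(Y)+|\mathrm{Fix}(Y)|>(d+\varepsilon)|Y|l$ forces the exponent to be $<-\varepsilon|Y|l\leq -\varepsilon l$, so the expected count is $\mathrm{poly}(l)\cdot(2m-1)^{-\varepsilon l}\to 0$. Summing over the finitely many combinatorial types and the polynomially many fixed-path labellings provided by the scheme only enlarges the polynomial prefactor, and Markov's inequality then gives the w.o.p.\ conclusion.

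The main technical nuisance I would expect is the careful bookkeeping around the cyclic and inversion symmetries of relators, the possibility that distinct $2$-cells of $Y$ carry the same relator (or a cyclic rotation/inverse of it), and the fact that the ``free'' edges must be labelled so that the boundary words are cyclically reduced; each of these considerations contributes only polynomial-in-$l$ factors absorbed into the $\mathrm{poly}(l)$ above. A secondary subtlety is to interpret $|\mathrm{Fix}(Y)|$ as the number of distinct $1$-cells of $\mathrm{Fix}(Y)$, so that overlapping fixed paths are not double-counted; this matches the definition in the statement and is what makes the edge count $|Y|l-\mathrm{Cancel}(Y)-|\mathrm{Fix}(Y)|$ come out correctly.
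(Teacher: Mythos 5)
First, a point of comparison: the paper does not prove Proposition~\ref{prop:isoperimetry2} at all — it is quoted verbatim from Odrzyg\'o\'zd\'z \cite{Od}*{Thm 1.5} — so there is no in-paper argument to match your sketch against; it has to be judged as an attempted reproof of that external result. Judged that way, it has a genuine gap at the step where you claim that ``independence of the relators yields that all $|Y|$ conditions hold with probability $\leq (Cl)^{|Y|}(2m-1)^{(d-1)l|Y|}$.'' That multiplicative bound is only legitimate when the $|Y|$ boundary words of the fully labelled complex are pairwise distinct (and even then it is a union bound over which elements of $R$ realise them, not independence — that part is fine). But the statement explicitly allows several $2$--cells of $Y$ to be carried to the \emph{same} relator, and this is exactly the situation the paper needs it for (ruling out $T'=gT$ in Step~1 of Construction~\ref{constr:main}, the identification argument in Step~2, and Corollary~\ref{cor:rel_notintile}). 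If only $k<|Y|$ distinct relators occur, the probability term is $(2m-1)^{(d-1)lk}$, which is larger than your bound by a factor exponential in $l$. To rescue the first moment one must show that the count of admissible labellings drops by a matching exponential amount, i.e.\ that the letter equations imposed on a single relator overlapping itself (or appearing on several faces) have rank comparable to the cancellation they account for. Dependencies among these equations — periodicity-type degeneracies when a relator is glued to a shifted copy of itself, multiple self-overlaps, and edges of degree $>2$ in the non-planar setting — are precisely where this can fail naively, and they affect the exponent, not the prefactor. Your remark that repeated relators ``contribute only polynomial-in-$l$ factors'' is therefore unjustified; handling this case is the technical core of Ollivier's proof of Theorem~\ref{thm:isoperimetry} in \cite{O-iso} and of Odrzyg\'o\'zd\'z's non-planar extension, not a bookkeeping footnote.

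Two smaller inaccuracies, both repairable: the number of combinatorial types of $(K,K')$--bounded complexes with $L$ fixed paths is not constant but polynomial in $l$ (each gluing and each fixed path is specified by positions and lengths along boundary words of length $l$), which is harmless for the union bound; and your count $(2m)^{|Y|l-\mathrm{Cancel}(Y)-|\mathrm{Fix}(Y)|}$ of free labellings genuinely needs to be cut down to roughly $(2m-1)$ per free letter using cyclic reducedness — for small $m$ the mismatch between bases $2m$ and $2m-1$ would otherwise swallow the $\varepsilon$ — and the reduction is valid because the free edges on each face form a bounded number of arcs, as you indicate. With those repairs the argument is sound in the regime where all faces bear distinct relators, but without a treatment of the repeated-relator case it does not prove the proposition.
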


We will use only the following consequence of
Proposition~\ref{prop:isoperimetry2}.

\begin{cor}
\label{cor:rel_notintile} Let $d<\frac{1}{4}$. Consider $2$--complexes $Y'\subset X$ with
$2\leq|Y'|\leq K$, fulfilled by $R$ in such a way
that exactly one $2$--cell $C'\subset Y'$ is carried by the map to $X/G$ onto the $2$--cell corresponding to a specified relator $r_1$.
W.o.p.\ there is no such $Y'$ satisfying
$$\mathrm{Cancel}(Y')>\frac{1}{4}(|Y'|-1)l.$$
\end{cor}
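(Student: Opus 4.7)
The plan is to apply Proposition~\ref{prop:isoperimetry2} directly to $Y'$, taking the single fixed path to be the boundary cycle $\partial C'$, so that $L=1$ and $|\mathrm{Fix}(Y')|=l$. The key observation is that because $r_1$ is specified in advance, the only labellings of $\partial C'$ compatible with a combinatorial map $Y'\to X/G$ are those that spell out $r_1$; there are at most $2l$ of these, one for each choice of basepoint and orientation on $\partial C'$, so the resulting labelling scheme is polynomial in $l$.

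Next I would verify the remaining hypotheses. The bound $2\leq |Y'|\leq K$ is given, and by Corollary~\ref{cor:cells_embed} together with Lemma~\ref{lem:cells_intersection_connected} we may take $K'=\tfrac{1}{2}K(K-1)$, so $Y'$ is w.o.p.\ $(K,K')$--bounded; the map $Y'\to X/G$ fulfils $R$ by assumption, and its restriction to $\mathrm{Fix}(Y')=\partial C'$ is by construction one of the admissible labellings. Now pick $\varepsilon>0$ with $d+\varepsilon<\tfrac14$. If $\mathrm{Cancel}(Y')>\tfrac14(|Y'|-1)l$, then
\[
\mathrm{Cancel}(Y')+|\mathrm{Fix}(Y')|>\tfrac14(|Y'|-1)l+l=\tfrac14|Y'|l+\tfrac34 l\geq(d+\varepsilon)|Y'|l,
\]
the last inequality holding because $\bigl(\tfrac14-(d+\varepsilon)\bigr)|Y'|\geq 0$. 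This contradicts Proposition~\ref{prop:isoperimetry2}, so w.o.p.\ no such $Y'$ exists.

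The main conceptual point, rather than a serious obstacle, is recognising that specifying $r_1$ in advance turns the boundary of $C'$ into known combinatorial data rather than a random cell. This is exactly what justifies the polynomial labelling scheme, which in turn lets us transfer one relator's worth (i.e.\ $l$) of cancellation into the $|\mathrm{Fix}(Y')|$ term and obtain the sharper bound $\tfrac14(|Y'|-1)l$ in place of the $\tfrac14|Y'|l$ one would get by applying Proposition~\ref{prop:isoperimetry} to $Y'$ directly.
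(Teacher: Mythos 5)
Your overall idea---exploit the fact that $r_1$ is known to gain a relator's worth of constraints via Proposition~\ref{prop:isoperimetry2}---is the right one, and your arithmetic is fine, but the way you invoke the proposition is not legitimate, and this is a genuine gap. In Proposition~\ref{prop:isoperimetry2} the polynomial labelling scheme must be fixed independently of the random relators used in the fulfilment: the probabilistic content is that a random relator filling a $2$--cell is unlikely to match prescribed letters along the fixed paths. In your application the scheme is built from $r_1$, which is itself one of the random relators of $R$, and at the same time you keep $C'$, the cell carried onto $r_1$, among the $2$--cells ``fulfilled by $R$''. Under that reading the proposition is simply false: take $Y$ to be a single $2$--cell whose entire boundary is the fixed path labelled by $r_1$; it is fulfilled by $R$ (map it to the cell of $r_1$) and satisfies $\mathrm{Cancel}(Y)+|\mathrm{Fix}(Y)|=0+l>(d+\varepsilon)|Y|l$, yet such a $Y$ exists with probability $1$. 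The ``$+l$'' you gain by declaring all of $\partial C'$ fixed is exactly the illegitimate part: demanding that the boundary of $C'$ spell $r_1$ imposes no probabilistic constraint on the relator filling $C'$, since that relator may be (and is) $r_1$ itself. The only genuine constraints are along $C'\cap(Y'-C')$, where the \emph{other} relators, which are independent of $r_1$, are forced to match the known word $r_1$.

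This is precisely how the paper proceeds, and it is the repair your argument needs: condition on $r_1$, pass to $Y\subset Y'$, the closure of the $2$--cells distinct from $C'$, set $\mathrm{Fix}(Y)=C'\cap Y$ (a union of at most $K-1$ subpaths by Lemma~\ref{lem:cells_intersection_connected}), take the scheme whose labellings are subwords of cyclic translates of $r_1$, and apply Proposition~\ref{prop:isoperimetry2} to $Y$ fulfilled by the independent relators $R-\{r_1\}$, with only the $|Y|=|Y'|-1$ random cells counted on the right-hand side. Since $\mathrm{Cancel}(Y')=\mathrm{Cancel}(Y)+|\mathrm{Fix}(Y)|$, the hypothesis $\mathrm{Cancel}(Y')>\frac{1}{4}(|Y'|-1)l=\frac{1}{4}|Y|l\geq(d+\varepsilon)|Y|l$ (for $\varepsilon$ with $d+\varepsilon\leq\frac{1}{4}$) then gives the contradiction, and the rest of your write-up (the $(K,K')$--boundedness and the polynomial count of at most $2l$ labellings per path) goes through essentially unchanged.
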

\begin{proof}
Let $K'=\frac{1}{2}K(K-1)$ and $L=K-1$. We apply Proposition~\ref{prop:isoperimetry2} to the random
presentation with relators $R-\{r_1\}$, which are independent
from $r_1$. Consider the polynomial labelling scheme assigning
the labellings that restrict on
each of the $L$ paths of $\mathrm{Fix}(Y)$ to subwords of the cyclic translates of $r_1$.

Given $Y'$ as in the statement of
Corollary~\ref{cor:rel_notintile},
we consider $Y\subset Y'$ that is the closure of the $2$--cells distinct
from $C'$. Let $\mathrm{Fix}(Y)=C'\cap Y$.
By Lemma~\ref{lem:cells_intersection_connected}, the
intersection $C'\cap C$ is connected for any $2$--cell $C\subset
Y$, and thus $\mathrm{Fix}(Y)$ is a union of at most $L=K-1$ subpaths of the
boundary paths of $2$--cells.
We have $\mathrm{Cancel}(Y')=\mathrm{Cancel}(Y)+|\mathrm{Fix}(Y)|$.
Moreover, since $Y'$ is fulfilled by $R$ in such a way that
only $C'$ is carried by the map to $X/G$ onto $r_1$, the
$2$--complex $Y$ is fulfilled by $R-\{r_1\}$ in such a way that
that the restriction to $\mathrm{Fix}(Y)$ is one of the labellings
assigned to $Y$ by our scheme. Thus the desired inequality
follows from the one in Proposition~\ref{prop:isoperimetry2}.
\end{proof}

\section{Tiles}
\label{sec:tiles}
In this section we describe the construction of \emph{tiles} mentioned in the Introduction.
From now on we always assume $d<\frac{1}{4}$.

\begin{defin}
\label{def:tile}
A \emph{tile} $T$ is a single $2$--cell or a $2$--complex $T$ that is the closure of its $2$--cells,
satisfies $$\mathrm{Cancel}(T)>\frac{1}{4} (|T|-1)l,$$
and can be expressed as a union of two tiles which do not share a $2$--cell.
A tile \emph{in $X$} is a tile that is a subcomplex of $X$.
\end{defin}

\begin{rem}
\label{rem:gluingtiles} Let $T,T'$ be tiles in $X$ that do not
share $2$--cells. If $|T\cap T'|> \frac{1}{4}l$, then by
Remark~\ref{rem:cancel} the union $T\cup T'$ is a tile. In the
case where $T,T'$ are single $2$--cells, conversely, if $T\cup
T'$ is a tile, then $|T\cap T'|> \frac{1}{4}l$.
\end{rem}

\begin{rem}
\label{rem-tile-size-bound} If $T$ is a tile in $X$, by
Proposition~\ref{prop:isoperimetry} for each $K,\varepsilon>0$
w.o.p.\ if $|T|\leq K$, then we have
$(d+\varepsilon)|T|>\frac{1}{4}(|T|-1)$. It follows, since
$d<\frac{1}{4}$, that w.o.p.\ the size $|T|$ of a tile is
uniformly bounded. Explicitly, if $d <
\frac{1}{4}\frac{N}{N+1}$, then $|T|\leq N$, since it suffices
to consider $K=2N$ to exclude the possibility of obtaining
tiles by gluing two tiles of size $\leq N$. In particular, for
$d<\frac{5}{24}$ we have $|T|\leq 5$.
\end{rem}

However, the reader will see that the tiles effectively
considered in the article will have size $\leq 4$.

We now generalise Lemma~\ref{lem:cells_intersection_connected}.

\begin{lem}
\label{lem:tile_intersection_connected}
Let $T,T'$ be intersecting tiles in $X$ that do not share $2$--cells. Then $T\cap T'$ is connected.
\end{lem}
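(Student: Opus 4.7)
The plan is to argue by contradiction, extending the strategy of Lemma~\ref{lem:cells_intersection_connected} from single $2$-cells to general tiles by producing a loop in $T \cup T'$ whose filling in $X$ forces too much cancellation. Suppose $T \cap T'$ has at least two components $A_1, A_2$. Since each tile is connected, I join a point of $A_1$ to a point of $A_2$ by an embedded path $\alpha \subset T^{(1)}$ and an embedded path $\alpha' \subset T'^{(1)}$ meeting only at their endpoints, so that $\gamma := \alpha \cup \alpha'$ is a simple closed path in $(T\cup T')^{(1)}$. Remark~\ref{rem-tile-size-bound} bounds $|T|, |T'|$ uniformly, and choosing $\alpha, \alpha'$ as $1$-skeleton geodesics lying in the topological boundaries of the tiles gives a uniform bound $|\gamma|\leq \tfrac{|T|+|T'|+2}{4}l$ via the tile inequalities (since the boundary length of a tile $S$ is $|S|l - 2\,\mathrm{Cancel}(S) < \tfrac{|S|+1}{2}l$).

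Next, by induction on tile size carried out in tandem with the present lemma, the tiles $T$ and $T'$ are simply connected (the base case of a single $2$-cell is immediate by Corollary~\ref{cor:cells_embed}, and the inductive step uses that $T = T_1 \cup T_2$ with $T_1 \cap T_2$ connected). Mayer--Vietoris then yields $H_1(T\cup T')\cong \Z^{k-1} \neq 0$, where $k\geq 2$ is the number of components of $T\cap T'$, so $\gamma$ may be chosen homotopically non-trivial in $T\cup T'$. Since $X$ is simply connected, $\gamma$ bounds a minimal disc diagram $D\to X$; non-triviality in $T\cup T'$ forces the image of $D$ to contain at least one $2$-cell outside $T\cup T'$. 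Let $Z\subset X$ be the union of such new $2$-cells and set $Y^* := T\cup T'\cup Z$; then $|Z|\geq 1$, and Theorem~\ref{thm:isoperimetry} applied to $D$ bounds $|Z|$ uniformly.

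The contradiction comes from estimating $\mathrm{Cancel}(Y^*)$ in two ways. Applying Remark~\ref{rem:cancel} to the decomposition $Y^* = T\cup T'\cup Z$ together with the tile bounds gives
\[
\mathrm{Cancel}(Y^*)\ \geq\ \tfrac14(|T|+|T'|-2)l + |T\cap T'| + \mathrm{Cancel}(Z) + |(T\cup T')\cap Z|,
\]
where the last term is at least $|\gamma|\geq l$ by Lemma~\ref{lem:no_short_loop}. Combining with the upper bound $\mathrm{Cancel}(Y^*)\leq(d+\varepsilon)|Y^*|l$ from Proposition~\ref{prop:isoperimetry} and using $d<\tfrac14$, the slack on the right is consumed by the extra $l$ from $|\gamma|$ together with the tile inequalities, producing the desired contradiction.

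The main obstacle is to control the new complex $Z$ precisely: when $|Z|$ is large the allowance $(d+\varepsilon)|Z|l$ on the right of the isoperimetric bound grows, and one must argue that $Z$ cannot absorb this allowance. I would handle this by invoking Corollary~\ref{cor:rel_notintile} with a distinguished $2$-cell inside $Z$, yielding a relative tile-like bound $\mathrm{Cancel}(Z)\leq\tfrac14(|Z|-1)l$ that, combined with the cancellation $|\gamma|$ picked up along the attaching of $Z$ to $T\cup T'$, closes the estimate. Some care is also needed if $2$-cells of $Z$ happen to coincide in $X/G$ with $2$-cells of $T\cup T'$, which would require a short case analysis treating each such coincidence separately.
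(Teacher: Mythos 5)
Your overall strategy --- produce a homotopically non-trivial embedded loop $\gamma=\alpha\cup\alpha'$ in $T\cup T'$ crossing between two components of $T\cap T'$, fill it by a disc diagram $D\to X$, and contradict Proposition~\ref{prop:isoperimetry} for the union $Y^*$ --- is the same as the paper's, but the crux of your estimate is unjustified. You assert $|(T\cup T')\cap Z|\geq|\gamma|\geq l$, i.e.\ that the new $2$--cells $Z$ attach to $T\cup T'$ along (at least) the whole loop $\gamma$. Nothing forces this: the $2$--cells of $D$ adjacent to $\partial D$ may be mapped into $T\cup T'$ (so they are not counted in $Z$), and the cells of $Z$ may meet $T\cup T'$ only in a short arc far from $\gamma$. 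In that case your lower bound degenerates to roughly $\tfrac14(|T|+|T'|-2)l+|T\cap T'|$ with $|T\cap T'|<\tfrac12 l$, which does not contradict Proposition~\ref{prop:isoperimetry} once the allowance $(d+\varepsilon)(|T|+|T'|+|Z|)l$ is on the other side. This is precisely the point the paper has to work for: one passes to a subdiagram of $D$, allowing $\alpha,\alpha'$ to become immersed, so that the cells of $D$ adjacent to $\alpha$ (resp.\ $\alpha'$) are not mapped to $T$ (resp.\ $T'$); the long path is then the image $P$ of the new boundary, which is only immersed, so the bound $|P|\geq l$ must come from Corollary~\ref{cor:no_short_image} rather than Lemma~\ref{lem:no_short_loop}. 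Without some such modification your key inequality can simply fail.

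Your proposed repair for large $|Z|$ also does not work. Corollary~\ref{cor:rel_notintile} concerns a relator $r_1$ specified in advance (its proof uses independence of $R-\{r_1\}$ from $r_1$), so you cannot ``distinguish'' an arbitrary $2$--cell of $Z$ chosen after inspecting the random group; moreover what you would extract is an \emph{upper} bound on $\mathrm{Cancel}(Z)$, which is useless for the \emph{lower} bound on $\mathrm{Cancel}(Y^*)$ that drives the contradiction. The correct way to stop $Z$ from absorbing the allowance is the per-cell accounting: every boundary edge of a $2$--cell in the image of $D$ lying outside $T\cup T'$ is shared with another cell of $Y^*$ (internal edges of $D$ are shared inside $D$, and edges of $\partial D$ land in $T\cup T'$), so by Remark~\ref{rem:cancel} each such cell contributes an extra $\tfrac12 l>(d+\varepsilon)l$, and a large $|Z|$ only helps. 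Your worry about cells of $Z$ coinciding with cells of $T\cup T'$ in $X/G$ is vacuous, since Proposition~\ref{prop:isoperimetry} is applied to the subcomplex $Y^*\subset X$, which is fulfilled by $R$ regardless of such coincidences. (Minor: the induction making $T,T'$ simply connected is unnecessary --- the Mayer--Vietoris boundary map $H_1(T\cup T')\to\tilde H_0(T\cap T')$ already shows that any loop of the form $\alpha\cup\alpha'$ joining distinct components of $T\cap T'$ is non-trivial.)
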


Before we give the proof, we deduce the following:

\begin{rem}
\label{rem:short_tree}
By Proposition~\ref{prop:isoperimetry} applied to $T\cup T'$ we have
\begin{align*}
	|T \cap T'| & = \mathrm{Cancel}(T \cup T') - \mathrm{Cancel}(T) - \mathrm{Cancel}(T') \\
		& < \frac14(|T|+|T'|)l - \frac14 (|T|-1)l - \frac14 (|T'|-1)l = \frac12 l.
\end{align*}
By Corollary~\ref{cor:short_geodesic}, $T\cap T'$ is a forest,
hence a tree by Lemma~\ref{lem:tile_intersection_connected}. It
follows that tiles in $X$ are contractible.
\end{rem}

\begin{proof}[Proof of Lemma~\ref{lem:tile_intersection_connected}]
If $T\cap T'$ is not connected, then there is in $T\cup T'$ a homotopically
non-trivial embedded closed path $\alpha\cup\alpha'$ with
$\alpha$ in $T$ and $\alpha'$ in $T'$. Let $D\rightarrow X$ be
a disc diagram for $\alpha\cup\alpha'$. By
Remark~\ref{rem-tile-size-bound}, the size of $T\cup T'$ is
uniformly bounded, hence $|\alpha\cup\alpha'|$ is uniformly
bounded as well. By Theorem~\ref{thm:isoperimetry}, $|D|$ is
uniformly bounded. After passing to a subdisc of $D$, and
allowing $\alpha,\alpha'$ to be immersed, we can also assume
that the cells in $D$ adjacent to $\alpha$, respectively
$\alpha'$, are not mapped to $T$, respectively $T'$.

Let $Y$ be the union of $T\cup T'$ with the image of $D$ in
$X$. The size of $Y$ is uniformly bounded, so we will be able
to apply Proposition~\ref{prop:isoperimetry} to $Y$. Let
$\mathcal{C}$ be the set of $2$--cells of $Y-T\cup T'$. Let $P$
be the image of $\partial D$ in $Y$. We estimate
$\mathrm{Cancel}(Y)$ using Remark~\ref{rem:cancel} with
$\{Y_i\}=\{T,T'\}\cup \mathcal C$. The edges of $P$ contribute
$\frac{1}{2}|P|$ in total to the terms with $Y_i=T,T'$.
Boundary paths of the $2$--cells of $\mathcal{C}$ contribute
additionally $\frac{1}{2}|\mathcal{C}|l$ in total to their own
terms. By Corollary~\ref{cor:no_short_image} we have $|P|\geq
l$. Thus
\begin{align*}
\mathrm{Cancel}(Y) &\geq \mathrm{Cancel}(T)+\mathrm{Cancel}(T')+ \frac{1}{2}|P|+\frac{1}{2}|\mathcal{C}|l \\
&>\frac{1}{4}(|T|-1)l+\frac{1}{4}(|T'|-1)l+\frac{1}{2}l+\frac{1}{4}|\mathcal{C}|l= \frac{1}{4}|Y|l,
\end{align*}
which contradicts Proposition~\ref{prop:isoperimetry}.
\end{proof}

\begin{defin}
\label{def:tileassignment} A \emph{tile assignment} $\mathcal
T$ assigns $G$--equivariantly to each $2$--cell $C$ of $X$ a
tile $\mathcal{T}(C)$ in $X$ containing $C$. An example of a
tile assignment is $\mathcal{T}_0(C)=C$ consisting of single
$2$--cells. If $T=\mathcal{T}(C)$ for some $C$ of $X$, we say
that $T$ \emph{belongs} to $\mathcal{T}$ and write $T\in
\mathcal{T}$.
\end{defin}

\begin{constr}
\label{constr:main} We will make use of a particular tile
assignment $\mathcal{T}=\mathcal{T}_k$ obtained as a last tile
assignment in a sequence $\mathcal{T}_0, \mathcal{T}_1,\ldots,
\mathcal{T}_k$, where $\mathcal{T}_0$ is as in
Definition~\ref{def:tileassignment} and $\mathcal{T}_{i+1}$ is
constructed from $\mathcal{T}_i$ in the following process
consisting of Step~1 and Step~2. During Step~1 of the process
every $2$--cell of $X$ will be in exactly one $T\in
\mathcal{T}_{i+1}$.

\begin{step}
For $i=0,1,\ldots$ we repeat the following construction of $\mathcal{T}_{i+1}$, while there are distinct $T,T'\in\mathcal{T}_i$
satisfying $|T|+|T'|\leq 4$ and $|T\cap T'|> \frac{1}{4}l$.

Choose $T,T'$ so that $|T|+|T'|$ is maximal possible, this
means in particular that if $T$ is a single $2$--cell, then we
first consider $T'$ consisting of two $2$--cells, rather than
$T'$ that is a single cell. This will be used only later in
Proposition~\ref{prop:wallconstruction}. By
Remark~\ref{rem:gluingtiles}, the union $T\cup T'$ is a tile.

We claim that the tiles $T,T'$ are not in the same $G$--orbit.
Otherwise if $T'=gT$, then let $Y$ be the $2$--complex obtained from $T\cup T'$ by identifying $T$ with $T'$.
In other words, $Y$ is obtained from $T$ by identifying for all the pairs of $2$--cells
$C,C'$ of $T$ the paths $C\cap g(C')$ and $g^{-1}(C)\cap C'$. Thus $Y$ is
$\big(|T|,\frac{1}{2}(|T|(|T|-1))+|T|^2\big)$--bounded. Since $\mathrm{Cancel}(Y)=\mathrm{Cancel}(T)+|T\cap T'| >
\frac14 |T| l$, this contradicts Proposition~\ref{prop:isoperimetry}, justifying the claim.

Let $\mathcal{T}_{i+1}$ be obtained from
$\mathcal{T}_{i}$ by differing it only on $gC$ for all $g\in G$
and $\mathcal{T}_i(C)\in \{T,T'\}$ and putting
$\mathcal{T}_{i+1}(gC)=gT\cup gT'$. Loosely speaking, we glue
the tiles $T$ and $T'$.
\end{step}

The process in Step~1 terminates, since the tiles have bounded
size and hence there are finitely many tile orbits. Once this
process terminates, we initiate the process described in
Step~2:

\begin{step}
Repeat the following construction of $\mathcal{T}_{i+1}$, while
there are $T\in\mathcal{T}_i$ with $|T|=2$ and a $2$--cell
$C=\mathcal{T}_i(C)$ such that $T'=T\cup C$ is a tile.

Note that by Step~1 we have $|C\cap T|\leq \frac{1}{4}l$. Let
$C$ be chosen so that $|C\cap T|$ is maximal possible. Consider
first the case where there is a $2$--cell
$C'=\mathcal{T}_i(C')\neq C$ such that $|C'\cap
T'|>\frac{1}{4}l$. The $2$--cells $C,C'$ cannot be in the same
$G$--orbit, otherwise the complex obtained from $T'\cup C'$ by identifying $C$ with $C'$ would
violate Proposition~\ref{prop:isoperimetry}. In that case let
$\mathcal{T}_{i+1}$ be obtained from $\mathcal{T}_{i}$ by
redefining
$\mathcal{T}_{i+1}(gC)=\mathcal{T}_{i+1}(gC')=gT'\cup gC'$. In
the case where there is no such $C'$, we redefine only
$\mathcal{T}_{i+1}(gC)=gT'$. Note that we keep
$\mathcal{T}_{i+1}(gC'')=gT$ for a $2$--cell $C''$ of $T$.
\end{step}
\end{constr}

\begin{rem}
\label{rem:core} Each tile $T\in \mathcal{T}$ obtained in
Construction~\ref{constr:main} contains a unique tile
$T_c\in\mathcal{T}$ that also belongs to the tile assignment in
which we terminate after Step~1. If $T_c\subsetneq T$, then
$|T_c|=2$. We call $T_c$ the \emph{core} of $T$. If distinct
$T,T'\in \mathcal{T}$ share $2$--cells, then these are the two
$2$--cells of $T_c=T'_c$ with $|T_c|=2$ (because in Step~2 we worked only with $|T|=2$).
\end{rem}

\section{Tile-walls}
\label{sec:tile-walls}

In this section we will extend the hypergraph construction from
the strategy outline in the Introduction to all the tiles in
the tile assignment $\mathcal{T}$ from
Construction~\ref{constr:main}. Recall our standing assumption
$d<\frac{1}{4}$.

\begin{defin}
\label{def:tile-wall} Let $T$ be a tile. A \emph{tile-wall
structure} on $T$ is an equivalence relation $\sim_T$ on the
edge midpoints of $T$, such that:
\begin{itemize}
\item The relation $\sim_T$ restricts to the boundary path
    of each $2$--cell $C$ of $T$ to a relation $\sim_C$
    that has exactly $2$ elements in each equivalence
    class.
\item For each equivalence class $\mathcal{W}$ of $\sim_T$,
    called a \emph{tile-wall}, consider the graph
    $\Gamma_{\mathcal{W}}$ in $T$, obtained by connecting
    the points of $\mathcal{W}$ in the boundary path of
    each $2$--cell $C$ by a diagonal in $C$. We call
    $\Gamma_{\mathcal{W}}$ the \emph{hypergraph} of
    $\mathcal{W}$, and require that it is a tree.
\end{itemize}
If $x\sim_Tx'\in \mathcal{W}$, then the unique path from $x$ to
$x'$ in $\Gamma_{\mathcal{W}}$ is called the \emph{hypergraph
segment} between $x$ and $x'$ and is denoted by $xx'$.
\end{defin}

\begin{defin}
\label{def:balance}
Let $T$ be a $2$--complex. The \emph{balance} of $T$ is the value
$$\mathrm{Bal}(T)=\frac{1}{4}(|T|+1)l-\mathrm{Cancel}(T).$$
Note that if $T$ is a tile, then $\mathrm{Bal}(T)\leq \frac{1}{2}l$ by Definition~\ref{def:tile}.
Moreover, if $T$ is a tile in $X$, then
$\mathrm{Bal}(T)>\frac{1}{4}l$ by Proposition~\ref{prop:isoperimetry}, since $d<\frac{1}{4}$.
\end{defin}

\begin{defin}
\label{def:balanced} Let $C$ be a $2$--cell in a tile $T$. A
tile-wall structure on $T$ is \emph{$C$--balanced} if for each
tile-wall $\mathcal{W}$ and $x,x'\in\mathcal{W}$ such that the
hypergraph segment $xx'$ traverses $C$, the distance between $x$ and $x'$
in $T^{(1)}$ satisfies $$|x,x'|_T\geq \mathrm{Bal}(T).$$ For
example, a tile-wall structure on a single $2$--cell $C$ is
$C$--balanced if and only if $\sim_C$ is the antipodal
relation. We say that a tile-wall structure on $T$ is
\emph{balanced} if it is $C$--balanced for every $2$--cell $C$
in $T$.
\end{defin}

Before we construct balanced tile-walls in Example~\ref{ex:main} and Proposition~\ref{prop:wallconstruction}, we need a handful of lemmas.

\begin{lem}
\label{lem:walls_do_not_backtrack} Let $T,T'$ be tiles in $X$
that do not share $2$--cells, and suppose that $T$ has a
$C$--balanced tile-wall structure $\sim_T$, for
some $2$--cell $C$ in $T$. Let $x\sim_Tx'$,
such that $xx'$ traverses $C$. Then at most one of $x,x'$ lies
in $T'$.
\end{lem}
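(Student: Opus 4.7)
The plan is to argue by contradiction: suppose both $x$ and $x'$ lie in $T'$, and derive that $|x,x'|_T<\mathrm{Bal}(T)$, contradicting the $C$-balanced hypothesis. The key quantitative comparison to establish is that $|T\cap T'|<\mathrm{Bal}(T)$, which I would get out of the isoperimetric inequality applied to $T\cup T'$.

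First I would observe that since $x$ and $x'$ are midpoints of edges of $T$ and lie in the closed subcomplex $T'$, the two edges carrying them lie in $T\cap T'$. By Remark~\ref{rem:short_tree}, $T\cap T'$ is a tree with fewer than $\tfrac{1}{2}l$ edges, and in any tree the distance between midpoints of two edges is at most the number of edges of the tree. Hence
$$|x,x'|_T \leq |T\cap T'|.$$

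The main step is then to show $|T\cap T'|<\mathrm{Bal}(T)$. Since $T\cup T'$ has uniformly bounded size (Remark~\ref{rem-tile-size-bound}), Proposition~\ref{prop:isoperimetry}, applied with $\varepsilon>0$ chosen so that $d+\varepsilon<\tfrac{1}{4}$, gives w.o.p.
$$\mathrm{Cancel}(T\cup T') < \tfrac{1}{4}(|T|+|T'|)l.$$
Since $T$ and $T'$ do not share $2$--cells, Remark~\ref{rem:cancel} applied to $\{T,T'\}$ yields
$$\mathrm{Cancel}(T\cup T') \geq \mathrm{Cancel}(T)+\mathrm{Cancel}(T')+|T\cap T'|.$$
Combining these two inequalities and using $\mathrm{Cancel}(T')>\tfrac{1}{4}(|T'|-1)l$ (since $T'$ is a tile),
$$|T\cap T'| < \tfrac{1}{4}(|T|+|T'|)l - \mathrm{Cancel}(T) - \mathrm{Cancel}(T') < \tfrac{1}{4}(|T|+1)l - \mathrm{Cancel}(T) = \mathrm{Bal}(T).$$
Together with the previous bound this gives $|x,x'|_T<\mathrm{Bal}(T)$, contradicting $C$-balancedness, so at most one of $x,x'$ can lie in $T'$.

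I do not expect any real obstacle here: the statement is morally that $T'$ is too small to contain both endpoints of a balanced hypergraph segment, and the only substantive input beyond routine bookkeeping is the isoperimetric estimate on $T\cup T'$, which converts directly into the numerical comparison between $|T\cap T'|$ and $\mathrm{Bal}(T)$.
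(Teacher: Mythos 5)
Your argument is correct and is essentially the paper's own proof run in the contrapositive order: the paper assumes both points lie in $T'$, uses connectedness of $T\cap T'$ plus $C$--balancedness to get $|T\cap T'|\geq \mathrm{Bal}(T)$, and then contradicts Proposition~\ref{prop:isoperimetry} via Remark~\ref{rem:cancel}, whereas you extract $|T\cap T'|<\mathrm{Bal}(T)$ from the same two ingredients first and then contradict balancedness. The only nitpick is that $\mathrm{Cancel}(T')>\frac14(|T'|-1)l$ is an equality when $T'$ is a single $2$--cell, but your chain survives since the strict inequality already comes from the isoperimetric step.
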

In particular, if the tile-wall structure is balanced, then the conclusion holds for all distinct $x\sim_Tx'$.

\begin{proof}
If both $x,x'$ lie in $T'$, then by Lemma~\ref{lem:tile_intersection_connected} we have $|T\cap T'|\geq|x,x'|_T\geq \mathrm{Bal}(T)$. Thus
\begin{align*}
\mathrm{Cancel}(T\cup T')&=|T\cap T'|+\mathrm{Cancel}(T)+\mathrm{Cancel}(T') \\
& \geq \frac{1}{4}(|T|+1)l+\frac{1}{4}(|T'|-1)l=\frac{1}{4}(|T\cup T'|)l,
\end{align*}
which contradicts Proposition~\ref{prop:isoperimetry}.
\end{proof}

\begin{lem}
\label{lem:wall_in_subtile} Let $T,T'$ be tiles in $X$ that do
not share $2$--cells, with $|T\cap T'|\geq \frac{1}{4}l$.
Suppose that $T$ has a $C$--balanced tile-wall structure, for
some $2$--cell $C$ in $T$. Let $x\sim_Tx'$, such that $xx'$
traverses $C$. Then
$$|x,x'|_{T\cup T'}\geq \mathrm{Bal}(T\cup T')+|T\cap T'|-\frac{1}{4}l.$$
\end{lem}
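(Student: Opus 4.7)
Set $P = T \cap T'$, and let $\gamma$ be a geodesic in $(T \cup T')^{(1)}$ from $x$ to $x'$ of length $L$. Remark~\ref{rem:cancel} applied with $\{Y_1, Y_2\} = \{T, T'\}$ (no triple is possible) gives the identity $\mathrm{Cancel}(T \cup T') = \mathrm{Cancel}(T) + \mathrm{Cancel}(T') + |P|$, so the target inequality $L \geq \mathrm{Bal}(T \cup T') + |P| - \tfrac{1}{4}l$ rewrites as $L \geq \mathrm{Bal}(T) + \mathrm{Bal}(T') - \tfrac{1}{2}l$. If $\gamma$ lies entirely in $T^{(1)}$, the $C$-balance hypothesis yields $L = |x, x'|_T \geq \mathrm{Bal}(T)$, which suffices because $\mathrm{Bal}(T') \leq \tfrac{1}{2}l$ by Definition~\ref{def:balance}.

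Otherwise, I would decompose $\gamma$ into maximal subpaths in $T^{(1)}$ alternating with excursions $\gamma_i^{T'}$ of lengths $\ell_i$ into $T' \setminus T$, each with endpoints $a_i, b_i \in P$. Because $P$ is a tree by Remark~\ref{rem:short_tree}, there is a unique tree-geodesic $\delta_i \subset P$ from $a_i$ to $b_i$, of length $d_P(a_i, b_i)$. Replacing each $\gamma_i^{T'}$ by $\delta_i$ yields a path $\gamma_T$ in $T^{(1)}$ from $x$ to $x'$, and the $C$-balance hypothesis gives
\[
\mathrm{Bal}(T) \;\leq\; |x, x'|_T \;\leq\; |\gamma_T| \;=\; L + \sum_i \bigl(d_P(a_i, b_i) - \ell_i\bigr).
\]
The lemma is thereby reduced to the \emph{bounded shortcut} estimate
\[
\sum_i \bigl(d_P(a_i, b_i) - \ell_i\bigr) \;\leq\; \tfrac{1}{2}l - \mathrm{Bal}(T'). \qquad (\ast)
\]

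For $(\ast)$ I would consider the simple loops $\eta_i = \gamma_i^{T'} \cup \bar{\delta}_i \subset T'$, each of which bounds a disc $D_i \subset T'$ by contractibility of $T'$ (Remark~\ref{rem:short_tree}); since $\gamma$ is a geodesic the excursions $\gamma_i^{T'}$ are pairwise disjoint, and I would arrange the $D_i$ to be pairwise disjoint as well. Setting $Y = \bigcup_i D_i$, I would apply Proposition~\ref{prop:isoperimetry} to the subcomplex $T' \setminus Y \subset X$ to get $\mathrm{Cancel}(T' \setminus Y) < (d + \varepsilon)(|T'| - |Y|)l$, and combine this with the cancellation identity
\[
\mathrm{Cancel}(T') \;=\; \mathrm{Cancel}(Y) + \mathrm{Cancel}(T' \setminus Y) + |Y \cap (T' \setminus Y)|
\]
from Remark~\ref{rem:cancel}, the disc-diagram formula $\mathrm{Cancel}(D_i) = \tfrac{1}{2}(|D_i| l - \ell_i - d_P(a_i, b_i))$, and the tile inequality $\mathrm{Cancel}(T') > \tfrac{1}{4}(|T'|-1)l$ from Definition~\ref{def:tile}, which I expect to yield $(\ast)$ after arithmetic. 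Substituting $(\ast)$ back produces the desired bound on $L$.

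The hardest step will be the isoperimetric step $(\ast)$. I must track carefully which edges of each $\gamma_i^{T'}$ lie on $\partial T'$ versus in the interior of $T'$, since only the latter contribute to $|Y \cap (T' \setminus Y)|$; I must also verify that the discs $D_i$ can genuinely be chosen pairwise disjoint in the presence of possibly overlapping tree-geodesics $\delta_i \subset P$, and combine the resulting inequalities tightly enough to reach the target. A case analysis separating the single-excursion case from configurations with multiple ``nested'' or ``parallel'' excursions is likely unavoidable.
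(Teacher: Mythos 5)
Your reduction is sound up to the point you label $(\ast)$: the rewriting of the target inequality as $L \geq \mathrm{Bal}(T)+\mathrm{Bal}(T')-\tfrac12 l$, the easy case $\gamma\subset T^{(1)}$, and the decomposition of $\gamma$ into excursions with shortcut paths $\delta_i\subset P$ are all correct. The genuine gap is that $(\ast)$ is never actually proved: you only sketch a disc-filling isoperimetric argument and yourself defer its key verifications. Moreover, that sketch would not go through as written. The tile $T'$ is a non-planar subcomplex of $X$, so contractibility (Remark~\ref{rem:short_tree}) only gives that each loop $\eta_i$ bounds a singular disc diagram mapping \emph{to} $T'$, not a disc subcomplex $D_i\subset T'$ with boundary $\eta_i$; consequently the formula $\mathrm{Cancel}(D_i)=\tfrac12(|D_i|l-\ell_i-d_P(a_i,b_i))$, the bookkeeping of $|Y\cap(T'\setminus Y)|$, and the pairwise disjointness of the $D_i$ have no justification, and the final ``arithmetic'' yielding $(\ast)$ is only conjectured.

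What you are missing is that $(\ast)$ holds for a much simpler reason, and in the stronger form that \emph{every} shortcut term is nonpositive --- which is exactly how the paper argues. By Remark~\ref{rem:short_tree}, $P=T\cap T'$ is a tree with $|P|<\tfrac12 l$; hence each $\delta_i$ is an embedded path of length $<\tfrac12 l$ and is therefore geodesic in $X^{(1)}$ by Corollary~\ref{cor:short_geodesic}, so $d_P(a_i,b_i)\leq \ell_i$ for every $i$. Your excursion-replacement then gives $|x,x'|_{T\cup T'}=|x,x'|_T\geq \mathrm{Bal}(T)$, and since $\mathrm{Bal}(T')\leq\tfrac12 l$ (Definition~\ref{def:balance}) your own algebraic rewriting (equivalently, the paper's computation $\mathrm{Bal}(T\cup T')\leq \mathrm{Bal}(T)-|T\cap T'|+\tfrac14 l$, which uses $\mathrm{Cancel}(T')>\tfrac14(|T'|-1)l$) finishes the proof. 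No isoperimetric input beyond what is already packaged in Remark~\ref{rem:short_tree} and Corollary~\ref{cor:short_geodesic} is needed, so you should delete the disc construction and replace the proof of $(\ast)$ by this one-line observation.
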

\begin{proof}
By Remark~\ref{rem:short_tree} we have $|T\cap T'|< \frac{1}{2}l$. Hence by Corollary~\ref{cor:short_geodesic} we obtain
$$|x,x'|_{T\cup T'}=|x,x'|_{T}\geq \mathrm{Bal}(T).$$

On the other hand,
\begin{align*}
\mathrm{Bal}(T\cup T')&=\frac{1}{4}(|T\cup T'|+1)l-\mathrm{Cancel}(T\cup T')\\
&=\frac{1}{4}(|T|+1)l +\frac{1}{4}|T'|l-\big(\mathrm{Cancel}(T)+|T\cap T'|+\mathrm{Cancel}(T')\big)\\
&\leq\mathrm{Bal}(T)-|T\cap T'|+\frac{1}{4}l.\qedhere
\end{align*}
\end{proof}

\begin{lem}
\label{lem:mix_gluing} Let $T,T'$ be tiles in $X$ that do not
share $2$--cells, with tile-wall structures that are
$C$--(respectively $C'$--)balanced. Let $\alpha$ be an embedded
path in $T\cap T'$ of length $\leq \frac{1}{4}l$ such that
$T\cap T'$ is contained in the $\frac{1}{4}l$--neighbourhood of
$\alpha$. Let $s\colon \alpha\rightarrow \alpha$ be the
symmetry exchanging the endpoints of $\alpha$. Suppose that we
have edge midpoints $x\in T,x'\in T', y\in \alpha$ such that
$x\sim_Ty, x'\sim_{T'}s(y)$, where $xy, x's(y)$ traverse
$C,C'$, respectively. Then
$$|x,x'|_{T\cup T'}\geq \mathrm{Bal}(T\cup T').$$
\end{lem}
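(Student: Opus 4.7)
My plan is to adapt to this general setting the argument used in the single-cell discussion of the Introduction and in Lemma~\ref{lem:wall_in_subtile}. Take any geodesic $\gamma$ in $(T\cup T')^{(1)}$ from $x$ to $x'$. By Lemma~\ref{lem:walls_do_not_backtrack} applied to both tile-wall structures (using $y,s(y)\in T\cap T'$), one gets $x\in T\setminus T'$ and $x'\in T'\setminus T$, so $\gamma$ must cross $T\cap T'$ at some point $z$, giving
$$|x,x'|_{T\cup T'}\geq |x,z|_{T\cup T'}+|z,x'|_{T\cup T'}.$$

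Next I would reduce to intrinsic distances in $T$ and $T'$. By Remark~\ref{rem:short_tree} we have $|T\cap T'|<\tfrac{1}{2}l$, so Corollary~\ref{cor:short_geodesic} guarantees that every embedded path in the tree $T\cap T'$ is a geodesic in $X^{(1)}$. Exactly as in the proof of Lemma~\ref{lem:wall_in_subtile}, this lets one reroute every $T'$-segment of a path from $x$ to $z$ through $T\cap T'\subset T$ without increasing length, so $|x,z|_{T\cup T'}=|x,z|_T$, and symmetrically $|z,x'|_{T\cup T'}=|z,x'|_{T'}$. Combining this with the triangle inequalities in $T$ and $T'$, the $C$- and $C'$-balance hypotheses $|x,y|_T\geq\mathrm{Bal}(T)$ and $|x',s(y)|_{T'}\geq\mathrm{Bal}(T')$, and the identifications $|z,y|_T=|z,y|_{T\cap T'}$ and $|z,s(y)|_{T'}=|z,s(y)|_{T\cap T'}$ (again by Corollary~\ref{cor:short_geodesic}), we obtain
$$|x,x'|_{T\cup T'}\geq \mathrm{Bal}(T)+\mathrm{Bal}(T')-(|z,y|_{T\cap T'}+|z,s(y)|_{T\cap T'}).$$

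The heart of the proof is then the tree-geometric estimate
$$|z,y|_{T\cap T'}+|z,s(y)|_{T\cap T'}\leq \tfrac{1}{4}l+|T\cap T'|. \qquad (\star)$$
Set $L=|\alpha|$, let $z^*$ be the foot on $\alpha$ of the tree-geodesic from $z$ to $\alpha$, and parametrise $\alpha$ as $[0,L]$ with $y$ at position $t\leq L/2$ and $s(y)$ at $L-t$. The left-hand side of $(\star)$ equals $2|z,z^*|_{T\cap T'}+|z^*-t|+|z^*-(L-t)|$. A quick check shows the last two terms sum to at most $L$, while $|z,z^*|_{T\cap T'}\leq b_{\max}$, the maximum tree-length of a branch of $T\cap T'$ off $\alpha$. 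The hypothesis that $T\cap T'$ lies in the $\tfrac{1}{4}l$-neighbourhood of $\alpha$ forces $b_{\max}\leq\tfrac{1}{4}l$, while trivially $b_{\max}\leq |T\cap T'|-L$; a short case split on whether $|T\cap T'|-L\geq\tfrac{1}{4}l$ or not then verifies $(\star)$. Combined with the identity $\mathrm{Bal}(T\cup T')=\mathrm{Bal}(T)+\mathrm{Bal}(T')-\tfrac{1}{4}l-|T\cap T'|$, which follows from Definition~\ref{def:balance} and the equality $\mathrm{Cancel}(T\cup T')=\mathrm{Cancel}(T)+\mathrm{Cancel}(T')+|T\cap T'|$ from Remark~\ref{rem:cancel} (as $T,T'$ share no $2$--cells), the desired bound follows. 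I expect the main obstacle to be $(\star)$: the two upper bounds on $b_{\max}$ have to balance exactly to yield the clean constant $\tfrac{1}{4}l+|T\cap T'|$ on the right-hand side.
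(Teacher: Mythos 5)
Your proof is correct and follows essentially the same route as the paper's: a geodesic from $x$ to $x'$ is cut at the intersection tree $A=T\cap T'$, the two balance hypotheses enter via the triangle inequality, the tree-distance defect $|y,z|_A+|s(y),z|_A$ is bounded by $|A|+\frac{1}{4}l$, and one concludes with $\mathrm{Bal}(T\cup T')=\mathrm{Bal}(T)+\mathrm{Bal}(T')-\frac{1}{4}l-|A|$, which comes from Remark~\ref{rem:cancel}. The only difference is cosmetic: the paper's Sublemma~\ref{sub:mix_gluing} allows two different points $z,z'$ (closest-point projections of $x$ and $x'$) while you use a single crossing point and reroute through the tree via Corollary~\ref{cor:short_geodesic}, so your estimate $(\star)$ is just the single-point case of that sublemma; note also that the two bounds you actually need, $|z,z^*|\leq\frac{1}{4}l$ and $|z,z^*|\leq|A|-|\alpha|$, hold directly, making the auxiliary quantity $b_{\max}$ (whose bound by $\frac{1}{4}l$ is questionable if it means total branch length) dispensable.
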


In the proof we need the following:

\begin{sublemma}
\label{sub:mix_gluing} Let $A$ be a tree, $\alpha\subset A$ a
path such that $A$ is contained in the $q$--neighbourhood of
$\alpha$. Let $s$ be the symmetry of $\alpha$ exchanging its
endpoints. Then for any points $z,z'\in A$ and $y \in \alpha$
we have
$$|y,z|_A+|s(y),z'|_A\leq |A|+\max\{|\alpha|,q\}.$$
\end{sublemma}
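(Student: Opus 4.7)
The plan is to project $z$ and $z'$ onto $\alpha$ and split each of the geodesics $[y,z]$ and $[s(y),z']$ in $A$ according to these projections. Since $A$ is a tree and $\alpha$ is a convex sub-path, there are unique nearest-point projections $p=p_z$ and $p'=p_{z'}$ in $\alpha$, and the geodesic $[y,z]$ passes through $p$ and decomposes as $\sigma_1=[y,p]\subset\alpha$ followed by $\beta_1=[p,z]$ lying inside the sub-tree of $A$ rooted at $p$ off $\alpha$ (a \emph{branch}); similarly $[s(y),z']=\sigma_2\cup\beta_2$ with $\sigma_2=[s(y),p']\subset\alpha$ and $\beta_2=[p',z']$. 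The $q$-neighbourhood hypothesis gives $|\beta_1|,|\beta_2|\leq q$, and the branches at distinct points of $\alpha$ are disjoint, so the total branch length is $|A|-|\alpha|$. I then bound the two sums $|\sigma_1|+|\sigma_2|$ and $|\beta_1|+|\beta_2|$ separately, splitting on whether $p=p'$.

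Parameterise $\alpha$ as $[0,|\alpha|]$ with $y$ at coordinate $u\leq\tfrac{1}{2}|\alpha|$, so $s(y)$ sits at $|\alpha|-u$. In the first case $p=p'$, the sub-paths $\sigma_1$ and $\sigma_2$ share the endpoint $p$, and a direct case analysis on the position of $p$ relative to $[u,|\alpha|-u]$ yields $|\sigma_1|+|\sigma_2|\leq|\alpha|$. The branch-subpaths both start at $p$, so they lie in a common branch $B$, and
\[
|\beta_1|+|\beta_2|=|\beta_1\cup\beta_2|+|\beta_1\cap\beta_2|\leq |B|+q\leq (|A|-|\alpha|)+q,
\]
since $|\beta_1\cap\beta_2|\leq\min(|\beta_1|,|\beta_2|)\leq q$. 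Adding the two estimates gives $|y,z|_A+|s(y),z'|_A\leq |A|+q$.

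In the second case $p\neq p'$, the paths $\beta_1,\beta_2$ lie in disjoint branches (or are degenerate if $z$ or $z'$ lies on $\alpha$), so $|\beta_1|+|\beta_2|\leq|A|-|\alpha|$; together with the trivial bound $|\sigma_1|+|\sigma_2|\leq 2|\alpha|$, this gives $|y,z|_A+|s(y),z'|_A\leq |A|+|\alpha|$. Combining, in either case the sum is at most $|A|+\max(|\alpha|,q)$, as required. The only step beyond routine bookkeeping is the inequality $|\sigma_1|+|\sigma_2|\leq|\alpha|$ in the first case, which relies on the symmetric placement of $y$ and $s(y)$ within $\alpha$; this is the main (but mild) obstacle.
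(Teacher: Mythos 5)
Your proof is correct and takes essentially the same approach as the paper: both decompose each geodesic into its portion along $\alpha$ and its portion in a branch, use the symmetric placement of $y$ and $s(y)$ to bound the two $\alpha$-portions by $|\alpha|$ when the geodesics leave $\alpha$ at the same point, and bound the branch portions by $|A|-|\alpha|$ plus an overlap of at most $q$. The only cosmetic difference is that the paper splits cases on whether the two geodesics intersect outside $\alpha$, whereas you split on whether the projections $p,p'$ coincide --- an equivalent dichotomy leading to the same estimates.
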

\begin{proof}
First consider the case where the paths $yz, s(y)z'$ in $A$
intersect outside $\alpha$. Then they leave $\alpha$ in the
same point, and hence $|yz\cap\alpha|+|s(y)z'\cap\alpha|\leq
|\alpha|$. Their length outside $\alpha$ is bounded by both $q$
and $|A|-|\alpha|$. Thus $|y,z|_A+|s(y),z'|_A\leq
|\alpha|+(q+|A|-|\alpha|)$, as desired. In the second case,
where $yz, s(y)z'$ are allowed to intersect only in $\alpha$,
we have $|y,z|_A+|s(y),z'|_A\leq 2|\alpha|+(|A|-|\alpha|)$.
\end{proof}

\begin{proof}[Proof of Lemma~\ref{lem:mix_gluing}]
We apply Sublemma~\ref{sub:mix_gluing} with $A=T\cap T'$. The
upper bound from Sublemma~\ref{sub:mix_gluing} is $\leq
|A|+\frac{1}{4}l$. Let $z,z'$ be the closest point projections
to $A$ of $x,x'$ in the $1$--skeleton of $T\cup T'$. By
Sublemma~\ref{sub:mix_gluing}, we have $|y,z|+|s(y),z'|\leq
|A|+\frac{1}{4}l$. Then
\begin{align*}
|x,x'|_{T\cup T'}&\geq |x,z|_{T}+|x',z'|_{T'}\geq |x,y|_T-|y,z|_T+|x',s(y)|_{T'}-|s(y),z'|_{T'}\\
&\geq \frac{1}{4}(|T|+1)l-\mathrm{Cancel}(T)+\frac{1}{4}(|T'|+1)l-\mathrm{Cancel}(T')-\Big(|A|+\frac{1}{4}l\Big)\\
&=\frac{1}{4}(|T\cup T'|+1)l-\mathrm{Cancel}(T\cup T'),
\end{align*}
as desired (the last equality comes from Remark~\ref{rem:cancel}).
\end{proof}

Applying Lemma~\ref{lem:mix_gluing} with $\alpha$ equal to a point $y$, we obtain the following.
Note that the distance condition on $y$ is satisfied automatically if $|T\cap T'|\leq \frac{1}{4}l$.

\begin{cor}
\label{lem:ant_gluing} Let $T,T'$ be tiles in $X$ that do not
share $2$--cells, with tile-wall structures that are
$C$--(respectively $C'$--)balanced. Let $y\in T\cap T'$ be an
edge midpoint such that $T\cap T'$ is contained in the
$\frac{1}{4}l$--neighbourhood of $y$. Suppose that we have edge
midpoints $x\in T,x'\in T'$ satisfying $x\sim_Ty,
x'\sim_{T'}y$, where $xy, x'y$ traverse $C,C'$, respectively.
Then
$$|x,x'|_{T\cup T'}\geq \mathrm{Bal}(T\cup T').$$
\end{cor}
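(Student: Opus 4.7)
The plan is to derive this corollary as the special case of Lemma~\ref{lem:mix_gluing} in which the embedded path $\alpha$ degenerates to a single point. Concretely I would set $\alpha=\{y\}$, a ``path'' of length $0$.

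With this choice the length condition $|\alpha|\leq \frac{1}{4}l$ of Lemma~\ref{lem:mix_gluing} is immediate, and the neighbourhood condition that $T\cap T'$ lie in the $\frac{1}{4}l$--neighbourhood of $\alpha$ becomes exactly the assumption placed on $y$ in the statement of the corollary. The symmetry $s\colon\alpha\to\alpha$ exchanging the endpoints of $\alpha$ must be the identity, since $\alpha$ consists of the single point $y$ and this point is its own (sole) endpoint; in particular $s(y)=y$. Consequently the hypotheses $x\sim_T y$ and $x'\sim_{T'} y=s(y)$, together with the fact that the hypergraph segments $xy$ and $x'y=x's(y)$ traverse $C$ and $C'$ respectively, are precisely what Lemma~\ref{lem:mix_gluing} requires. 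Invoking that lemma yields $|x,x'|_{T\cup T'}\geq \mathrm{Bal}(T\cup T')$ directly.

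There is no genuine obstacle here: the entire argument is a ``degenerate $\alpha$'' specialisation of the previous lemma. The only conceptual check worth pausing over is that the symmetry $s$ remains well-defined when $\alpha$ collapses to a one-point set, which it does trivially as the identity map, so the invocation of Lemma~\ref{lem:mix_gluing} is legitimate.
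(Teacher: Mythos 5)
Your proposal is correct and coincides with the paper's own derivation: the corollary is obtained by applying Lemma~\ref{lem:mix_gluing} with $\alpha$ degenerated to the single point $y$, so that $s$ is the identity and $s(y)=y$. Your check that the hypotheses of the lemma specialise exactly to those of the corollary is precisely the intended argument.
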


The following warm-up example generalises the balanced tile-wall construction from the Introduction.

\begin{ex}
\label{ex:main} Let $T$ be a tile and let $T'$ be a complex
obtained by gluing to $T$ a $2$--cell $C$ along a path $A$ of
length $\frac{1}{4}l<|A|<\frac{1}{2}l$. Suppose that $T$ has a
balanced tile-wall structure $\sim_T$. We can then extend
$\sim_T$ to the following balanced tile-wall structure
$\sim_{T'}$.

Let $\alpha_+,\alpha_-\subset A$ be subpaths of length
$\lceil|A|-\frac{1}{4}l\rceil$ starting at the endpoints of $A$. The paths
$\alpha_\pm$ are disjoint since $|A|<\frac{1}{2}l$. Let
$\beta_+,\beta_-$ be the images in $\partial C$ of
$\alpha_+,\alpha_-$ under the antipodal map. Note that
$\beta_\pm$ are outside $T$ since $|A|<\frac{1}{2}l$. Let $s_+$
be the symmetry of $\alpha_+$ exchanging its endpoints, and let
$s_-$ be the symmetry of $\alpha_-$ exchanging its endpoints.

We define $\sim_C$ to be the antipodal relation outside the
union of the interiors of $\alpha_+,\alpha_-,\beta_+$ and
$\beta_-$. For an edge midpoint $x$ in the interior of
$\beta_\pm$ and its antipode $y\in \alpha_\pm$ we define
$x\sim_Cs_\pm(y)$. By Lemma~\ref{lem:walls_do_not_backtrack}
for each pair of edge midpoints related by $\sim_{T}$ at most
one of them lies in $A$, and by construction the same holds for
$\sim_C$. Thus the relation $\sim_{T'}$ generated by $\sim_{T}$
and $\sim_C$ is a tile-wall structure.

Now we show that the relation $\sim_{T'}$ is balanced. Consider
distinct $x\sim_{T'}x'$. If $x,x'\in T$, then by
Lemma~\ref{lem:wall_in_subtile} we have
$|x,x'|_{T'}\geq\mathrm{Bal}(T')$, as desired. Secondly,
consider the case where $x,x'\in C$. If $x,x'$ are not
antipodal, then one of them, say $x$, lies in $\alpha_\pm$, so
the antipode of $x'$ is $s_\pm(x)$. By
Lemma~\ref{lem:wall_in_subtile} we have thus
$|s_\pm(x),x'|_{T'}\geq\mathrm{Bal}(T')
+|A|-\frac{1}{4}l>\mathrm{Bal}(T')+|x,s_\pm(x)|_{T'}$, and by
the triangle equality we obtain the desired bound on
$|x,x'|_{T'}$.

Finally, consider the case where $x\in T-C, x'\in C-T$. Thus
there is $y\in A$ with $x\sim_{T}y$ and $y\sim_{C}x'$. If $y\in
\alpha_\pm$, then the required estimate follows from
Lemma~\ref{lem:mix_gluing}. Otherwise $y$ and $x'$ are
antipodal and we use Corollary~\ref{lem:ant_gluing}.
\end{ex}

Now follows the key result of the article, where we construct
$C$--balanced tile-wall structures on all the tiles from the
tile assignment in Construction~\ref{constr:main}. In Part~1 we
consider tiles obtained in Step~1 of that construction,
extending Example~\ref{ex:main}. In Part~2, we need to deal
with tiles obtained in Step~2, which might share $2$--cells
according to Remark~\ref{rem:core}. To deal with this
complication in later sections we need to record additional
ad-hoc properties (ii)---(iii) in
Proposition~\ref{prop:wallconstruction}, which we recommend to
ignore at a first reading.

\begin{prop}
\label{prop:wallconstruction} For the collection of tiles $T$
belonging to the tile assignment $\mathcal{T}$ from
Construction~\ref{constr:main}, there are tile-wall structures
$\sim_T$ that are $C$--balanced for each $C$ with
$T=\mathcal{T}(C)$. Moreover:
\begin{enumerate}[(i)]
\item
The relation $\sim_{T}$ on $T\in \mathcal{T}$ restricts to $\sim_{T_c}$ on the core $T_c\in \mathcal{T}$ from Remark~\ref{rem:core}.
\item If $C'$ is a $2$--cell of $T-T_c$ with $x\sim_{C'}y$
    distinct and not antipodal in $C'$, then one of $x,y$,
    say $y$, lies in $T_c$ and the edge midpoint $y'$
    antipodal to $x$ also lies in $T_c$.
\item If $C'$ is a $2$--cell of $T-T_c$ with
    $x\sim_Ty\sim_Tw$, where $x\neq y\in C'$ and $w\in
    T_c$, then one of $x,y$ lies in $T_c$ and the other
    lies in no other $2$--cells of $T$ except for $C'$.
\end{enumerate}
\end{prop}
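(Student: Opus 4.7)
The plan is to prove Proposition~\ref{prop:wallconstruction} by induction on the iteration index $i$ in Construction~\ref{constr:main}, with the inductive hypothesis that every $T \in \mathcal{T}_i$ carries a tile-wall structure $\sim_T$ that is $C$-balanced for each $C$ with $\mathcal{T}_i(C) = T$, and that (i)--(iii) hold. The base case $\mathcal{T}_0$ is immediate: each tile is a single $2$-cell, $\sim_T$ is the antipodal relation, $T_c = T$, and (i)--(iii) are vacuous.

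\textbf{Part~1 (Step~1 iterations).} Suppose $\mathcal{T}_{i+1}$ is obtained by replacing $T, T' \in \mathcal{T}_i$ with $T \cup T'$, where $|T \cap T'| > \tfrac{1}{4}l$. By Lemma~\ref{lem:tile_intersection_connected} and Remark~\ref{rem:short_tree} the set $A := T \cap T'$ is a tree with $\tfrac{1}{4}l < |A| < \tfrac{1}{2}l$. When one of $T, T'$ is a single $2$-cell (cases $(1,1)$, $(1,2)$, $(1,3)$), $A$ is a subpath of a boundary circle and Example~\ref{ex:main} applies verbatim, extending $\sim_T$ by modifying the antipodal relation on the newly added cell near the two endpoints of $A$ via subpaths $\alpha_\pm$ of length $\lceil |A| - \tfrac{1}{4}l\rceil$ and their flipping symmetries $s_\pm$. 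In the $(2,2)$ case (the only other possibility since tile sizes are bounded by $4$), $A$ may be a genuine tree; one identifies leaf-subpaths of length $\lceil |A| - \tfrac{1}{4}l\rceil$ at each leaf of $A$ and modifies $\sim_{T'}$ via the corresponding flipping symmetries, paralleling Example~\ref{ex:main}. Lemma~\ref{lem:walls_do_not_backtrack} applied to $\sim_T$ ensures the combined relation is a valid tile-wall structure, and balance follows from the four-case analysis of Example~\ref{ex:main} using Lemmas~\ref{lem:wall_in_subtile}, \ref{lem:mix_gluing} and Corollary~\ref{lem:ant_gluing}. Properties (i)--(iii) are vacuous, since $T_c = T \cup T'$ for a Step~1 tile by Remark~\ref{rem:core}.

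\textbf{Part~2 (Step~2 iterations).} Here $T_c \in \mathcal{T}_i$ is a $2$-tile carrying balanced $\sim_{T_c}$, and we form $T' = T_c \cup C$ with $|T_c \cap C| \leq \tfrac{1}{4}l$, possibly then enlarged to $T' \cup C'$ when $|C' \cap T'| > \tfrac{1}{4}l$. For $T'$ I would take $\sim_{T'}$ to be $\sim_{T_c}$ on $T_c$ together with an appropriately chosen $\sim_C$ on $C$ (typically antipodal, but adjusted at midpoints whose antipode lies in $T_c$ so as to preserve $C$-balance despite the tight margin); property (i) is immediate and (ii) records any such adjustment. The required $C$-balance of $\sim_{T'}$ is verified by case analysis: walls staying in $T_c$ use Lemma~\ref{lem:wall_in_subtile}, walls staying in $C$ enjoy the antipodal bound $\tfrac{1}{2}l > \mathrm{Bal}(T')$, and walls crossing between $T_c$ and $C$ use Corollary~\ref{lem:ant_gluing}. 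For the further enlargement $T' \cup C'$, the overlap condition $|C' \cap T'| > \tfrac{1}{4}l$ puts us back in the Example~\ref{ex:main} regime and we modify the antipodal relation on $C'$ near the endpoints of $C' \cap T'$; here (ii)--(iii) are needed to record how those modifications in $C'$ land relative to $T_c$.

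\textbf{Main obstacle.} The delicate point is Part~2. Because after a Step~2 move both $T_c$ and $T'$ (and, when applicable, $T' \cup C'$) belong to $\mathcal{T}$ with their tile-wall structures living on the cells of $T_c$, property~(i) forces these structures to agree there, so the modifications on $C$ and $C'$ cannot propagate back and corrupt $\sim_{T_c}$. Combined with the fact that the inequality $|T_c \cap C| \leq \tfrac{1}{4}l$ leaves essentially no quantitative slack in Lemma~\ref{lem:wall_in_subtile} and related estimates, the construction on $C$ must be chosen delicately, and the $C'$-extension via Example~\ref{ex:main} must be analysed in a rather \emph{ad-hoc} manner --- this combinatorial bookkeeping is precisely what (ii) and (iii) are there to encode for use in later sections of the paper. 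The $(2,2)$ subcase of Part~1 is the secondary technical difficulty: one must handle the possibility that $A$ is a tree with several leaves rather than a single path, which requires a leaf-by-leaf generalization of Example~\ref{ex:main} but is otherwise routine.
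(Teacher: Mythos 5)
Your overall plan --- induction along Construction~\ref{constr:main}, Example~\ref{ex:main} for the gluings involving a single $2$--cell, and balance via Lemmas~\ref{lem:wall_in_subtile}, \ref{lem:mix_gluing} and Corollary~\ref{lem:ant_gluing} --- is the same as the paper's, but the two places you label ``routine'' or defer as the ``main obstacle'' are precisely where the content of the proof lies, and your sketches of them do not work as stated. In the $(2,2)$ case of Step~1 you never invoke the maximality of $|T|+|T'|$ in Step~1; this is what forces $|C_j\cap T|\leq\frac14 l$ for each $2$--cell $C_j$ of $T'$, hence that $T\cap T'$ is a (possibly degenerate) tripod whose two modification regions can be taken to be the complements of the $\frac14 l$--neighbourhoods of the far endpoints $u_2$, resp.\ $u_1$: each then lies in a single cell ($C_1$, resp.\ $C_2$), disjoint from the other cell's overlap, which is exactly what makes Lemma~\ref{lem:mix_gluing} (whose hypothesis needs $T\cap T'$ inside the $\frac14 l$--neighbourhood of a path $\alpha$ of length $\leq\frac14 l$) applicable. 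Your prescription ``leaf-subpaths of length $\lceil|A|-\frac14 l\rceil$ at each leaf'' is not the right region when the tripod is non-degenerate (it has three leaves, and the correct paths are shorter than $|A|-\frac14 l$ by the length of the shared leg), and without the bound $|C_j\cap T|\leq\frac14 l$ you have no argument that the modified relation is a tile-wall structure, let alone balanced; note also that $\sim_{C_1}$ need no longer be antipodal at this stage, so ``Example~\ref{ex:main} verbatim'' is not available.

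In Part~2 the proposal deviates from what can work and leaves the verification open. Since $|T_c\cap C|\leq\frac14 l$, no adjustment of the antipodal relation on $C$ is needed (nor is an Example-style one even defined, as $\alpha_\pm$ would be empty): one keeps $\sim_C$ antipodal and checks $C$--balance of the generated relation directly via Corollary~\ref{lem:ant_gluing} together with the trivial bound $\frac12 l\geq\mathrm{Bal}(T')$ for antipodal pairs --- and only $C$--balance is required, because the cells of $T_c$ retain $\mathcal{T}=T_c$. More seriously, for the further cell $C'$ with $|C'\cap T'|>\frac14 l$ you propose to modify the relation ``near the endpoints'' of $A=C'\cap T'$; the correct construction modifies it only near the endpoint of $A$ lying in $T$, leaving the end of $A$ inside $C$ antipodal --- a modification there would have its swapped partner outside $T_c$ and would destroy assertion (ii), which you must prove. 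The resulting balance check is not an instance of Example~\ref{ex:main}: one needs that a hypergraph segment leaving $A$ into $C$ stays in $C$ (whence $|x,x'|\geq\frac12 l$ by Corollary~\ref{lem:ant_gluing} applied to $C,C'$), and, in the case $yx\subset T$, the maximality of $|C\cap T|$ from Step~2, which gives $|C\cap(C'\cup T)|\geq|C'\cap T'|>\frac14 l$ and hence $\mathrm{Bal}(T'')\leq\mathrm{Bal}(T\cup C')$. None of this appears in the proposal; since you explicitly defer exactly this analysis, the proposition is not established.
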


\begin{proof}
Recall that in Construction~\ref{constr:main} we obtain
$\mathcal{T}=\mathcal{T}_k$ as the last of a sequence of tile
assignments $(\mathcal{T}_i)$. We will construct inductively
relations $\sim^i_T$ on the tiles $T\in\mathcal{T}_i$
satisfying required conditions for $\mathcal{T}=\mathcal{T}_i$.
More precisely, for all $2$--cells of $X$ we will construct
$\sim^i_C$ generating $\sim^i_T$, in particular assertion (i)
will be automatic. Note that for $\mathcal{T}=\mathcal{T}_0$,
where $\mathcal{T}_0(C)=C$ it suffices to consider the
antipodal relation.

\smallskip

\noindent \textbf{Part~1.} During Step~1 of
Construction~\ref{constr:main}, distinct tiles in
$\mathcal{T}_{i+1}$ do not share $2$--cells, and for each
$2$--cell $C$ of $T\in \mathcal{T}_{i+1}$ we have
$\mathcal{T}_{i+1}(C)=T$. Thus if $T,T'\in \mathcal{T}_i$ are as in
Step~1 of Construction~\ref{constr:main}, we only need to
construct a tile-wall structure on $T\cup T'$ that is balanced
(assertions (ii)--(iii) are void). If $|T|+|T'|\leq 3$, then at
least one of $T,T'$ is a single cell and such a tile-wall
structure is given in Example~\ref{ex:main}.

Now assume that in Step~1 we have $|T|=|T'|=2$. Without loss of
generality assume that $T$ appeared for smaller $i$ in
$\mathcal{T}_i$ than $T'$. Denote the $2$--cells of $T'$ by
$C_1,C_2$. Note that the intersection path $\alpha_j=C_j\cap T$
cannot have length $>\frac{1}{4}l$: otherwise, by the
maximality condition in Step~1, instead of gluing $C_1$ to
$C_2$ to obtain $T'$ we would have had to glue $C_j$ to $T$. In
particular, the intersection $T\cap T'$ has the form of a
(possibly degenerate) tripod $\alpha_1\cup\alpha_2$, where an
endpoint of $\alpha_1$ coincides with an endpoint of
$\alpha_2$, and the other endpoint $u_1$ of $\alpha_1$
(respectively $u_2$ of $\alpha_2$) is outside $\alpha_2$
(respectively $\alpha_1$). Moreover, the complement in
$\alpha_1\cup\alpha_2$ of the $\frac{1}{4}l$--neighbourhood of
$u_2$ (respectively $u_1$) is either empty or is a path containing $u_1$ ($u_2$)
disjoint from $\alpha_2$ ($\alpha_1$). This path is an edge-path if $l$ is divisible by $4$, otherwise it ends with a
half-edge. Its span, which is an edge-path, will be called
$\alpha_+$ ($\alpha_-$).

We change the relation $\sim^i_{C_1}$ (which does not have to
be antipodal at this stage) to $\sim^{i+1}_{C_1}$ in the
following way. Let $s_+$ be the symmetry of $\alpha_+$ exchanging its
endpoints. If we
have distinct $x\sim^i_{C_1} y$ with $y$ in the interior of
$\alpha_+$, then we replace it with $x\sim^{i+1}_{C_1}s_+(y)$.
Analogically, let $s_-$ be the symmetry of $\alpha_-$ exchanging its
endpoints. If we
have distinct $x\sim^i_{C_2} y$ with $y$ in the interior of
$\alpha_-$, then we replace it with $x\sim^{i+1}_{C_2}s_-(y)$.
All other relations remain unchanged.

By Lemma~\ref{lem:walls_do_not_backtrack}, the relation
$\sim^{i+1}_{T\cup T'}$ generated by
$\sim^i_T,\sim^{i+1}_{C_1}$ and $\sim^{i+1}_{C_2}$ is a
tile-wall structure. It is balanced by
Lemmas~\ref{lem:wall_in_subtile},~\ref{lem:mix_gluing}, and
Corollary~\ref{lem:ant_gluing}, by considering the same four
cases as in Example~\ref{ex:main}. This closes the construction
of tile-walls for the tiles in $\mathcal{T}_{i+1}$ from Step~1
of Construction~\ref{constr:main}.

\smallskip

\noindent \textbf{Part~2.} Now consider $T,C,C' \in
\mathcal{T}_{i}$ as in Step~2 of Construction~\ref{constr:main}
($C'$ might not be defined). Note that by the process in Step~1,
we have, when defined, all $|T\cap C|,|T\cap C'|, |C\cap
C'|\leq \frac{1}{4}l$. Consequently $|T\cap (C\cup C')|,
|C\cap(C'\cup T)|\leq \frac{1}{2}l$.

We first claim that the tile-wall structure $\sim^{i+1}_{T'}$
on $T'=T\cup C$
generated by~$\sim^{i}_{T}$ and the antipodal relation
$\sim^{i}_{C}$ is $C$--balanced. Indeed, suppose that
$x\sim^{i+1}_{T'}x'$ and that $xx'$ traverses $C$. Then without
loss of generality we have $x'\in C-T$. If $x\in T-C$, then
$|x,x'|_{T'}\geq \mathrm{Bal}(T')$ by
Corollary~\ref{lem:ant_gluing}. Otherwise $x$ is the antipode
of $x'$ in $C$, so we have trivially $|x,x'|_{T'}=\frac{1}{2}l$
which is $\geq \mathrm{Bal}(T')$ by
Definition~\ref{def:balance}. This justifies the claim.

If we continue to glue a $2$--cell $C'$ to $T'$, let
$A$ be the path $C'\cap T'$ of length $|A|>\frac{1}{4}l$. Note
that by Lemma~\ref{lem:tile_intersection_connected} the path
$A$ consists of three segments, the first one in $T-C$, the
second one (possibly degenerate) in $T\cap C$, and the third
one in $C-T$. Let $\alpha\subset A$ be the subpath of length
$\lceil|A|-\frac{1}{4}l\rceil$ containing that endpoint of $A$ which lies
in $T$. Since $A\cap C=C'\cap C$ has length $\leq
\frac{1}{4}l$, the interior of the path $\alpha$ is disjoint from~$C$. We set
$\sim^{i+1}_{C'}$ to be antipodal except in the interior of
$\alpha$ and its antipodal image $\beta$, where for antipodal
$x\in \beta, y\in \alpha$ we put $x\sim^{i+1}_{C'}s(y)$, where
$s$ is the symmetry of $\alpha$ exchanging its
endpoints.

Let $T''=T'\cup C'$. By Lemma~\ref{lem:walls_do_not_backtrack},
the relation $\sim^{i+1}_{T''}$ generated by $\sim^{i+1}_{T'}$
and $\sim^{i+1}_{C'}$ is a tile-wall structure. We now prove
that $\sim^{i+1}_{T''}$ is $C$--balanced and $C'$--balanced.
Let $x\sim^{i+1}_{T''} x'$ with the hypergraph segment $xx'$
traversing $C$ or $C'$. If $xx'$ is contained in $T'$ or $C'$,
then the required estimate follows from the claim above and
from Lemma~\ref{lem:wall_in_subtile}, as in the first two cases
of Example~\ref{ex:main}.

Otherwise we can assume $x\in T'-C', x'\in C'-T'$, and there is
$y\in A\cap xx'$. Note that if the neighbourhood of $y$ in $yx$
lies in $C$, then we have $yx\subset C$ since the length of
$|C\cap (T\cup C')|$ is $\leq \frac{1}{2}l$ and
$\sim^{i+1}_C=\sim^{i}_C$ was antipodal. In this case
$|x,x'|_{T''}=|x,x'|_{C\cup C'}$ by
Corollary~\ref{cor:short_geodesic}, and the latter is $\geq \frac{1}{2}l$
by Corollary~\ref{lem:ant_gluing} applied with $C$ and $C'$
playing the roles of $T,T'$.

Otherwise, the neighbourhood of $y$ in $yx$ lies in $T$. If,
nevertheless, $yx$ traverses $C$, then since $\sim^{i+1}_{T'}$
is $C$--balanced, we can apply Lemma~\ref{lem:mix_gluing} and
Corollary~\ref{lem:ant_gluing} as in the last two cases of
Example~\ref{ex:main} to obtain $|x,x'|_{T''}\geq
\mathrm{Bal}(T'')$.

It remains to consider the situation where $yx\subset T$. By
Lemma~\ref{lem:mix_gluing} and Corollary~\ref{lem:ant_gluing}
we obtain $|x,x'|_{T\cup C'}\geq \mathrm{Bal}(T\cup C')$. By
Corollary~\ref{cor:short_geodesic}, we have
$|x,x'|_{T''}=|x,x'|_{T\cup C'}$. By the process in Step~2 we
have $|C\cap T|\geq |C'\cap T|$, so that $|C\cap (C'\cup
T)|\geq |C'\cap T'|>\frac{1}{4}l$. Thus
$$\mathrm{Bal}(T'')=\mathrm{Bal}(T\cup C')+\frac{1}{4}l-|C\cap (C'\cup T)|<\mathrm{Bal}(T\cup C')\leq |x,x'|_{T''},$$
as desired.

Assertions (ii) and (iii) follow immediately from the construction.
\end{proof}

In the next section we will operate on tiles which we will need to make disjoint.
To do this, we will sometimes replace them by single $2$--cells, according to the behaviour of the wall in which we will be
interested:

\begin{defin}
\label{def:augmented}
Let $\mathcal{T}$ be the tile assignment from Construction~\ref{constr:main}. Let $C$ be a $2$--cell of $X$ and $\mathcal{W}$ a wall of $T=\mathcal{T}(C)$ intersecting $C$.
We assign to each such pair $(C,\mathcal{W})$, the \emph{augmented} tile denoted by $\mathcal{T}(C,\mathcal{W})$ that equals
\begin{itemize}
\item
$T$ if $\mathcal{W}$ intersects the core $T_c$ of $T$, and
\item
$C$ otherwise.
\end{itemize}
If $\gamma\subset C$ is a hypergraph segment of a wall $\mathcal{W}$ of $T$, then we denote the augmented tile $\mathcal{T}(C,\mathcal{W})$ also by $\mathcal{T}(C,\gamma)$.
\end{defin}

\begin{rem}
\label{rem:smallantipodal} Suppose that we have a $C$--balanced
tile-wall structure on $T$ satisfying
Proposition~\ref{prop:wallconstruction}(ii). If
$\mathcal{T}(C,\mathcal{W})=C$, then the two points of
$\mathcal{W}$ in $C$ are antipodal. Thus in general any
$x,y\in\mathcal{W}$ in $T'=\mathcal{T}(C,\mathcal{W})$ such
that $xy$ traverses~$C$ satisfy $|x,y|_{T'}\geq
\mathrm{Bal}(T')$.
\end{rem}

\section{Walls}
\label{sec:walls}

\begin{defin}
\label{def:wall} Suppose that on each $2$--cell $C$ of the
Cayley complex $X$ we have a relation $\sim_C$ on edge
midpoints that has exactly two elements in each equivalence
class. A \emph{wall structure} on $X$ is the equivalence
relation $\sim$ on edge midpoints of $X$ generated by such
$\sim_C$.

For an equivalence class $\mathfrak{W}$ of $\sim$, called a
\emph{wall}, consider the \emph{hypergraph}
$\Gamma_{\mathfrak{W}}$, immersed in~$X$, obtained by
connecting the points $x\sim_Cx'\in \mathfrak{W}$ in each
$2$--cell $C$ by a diagonal in $C$. A \emph{hypergraph segment}
is an edge-path in $\Gamma_{\mathfrak{W}}$.
\end{defin}

We consider tile-wall structures $\sim_T$ on the tiles in the
tile assignment $\mathcal{T}$ from
Construction~\ref{constr:main} satisfying
Proposition~\ref{prop:wallconstruction}. By
Proposition~\ref{prop:wallconstruction}(i) they restrict to
consistent $\sim_C$ on $2$--cells, and thus give rise to a wall
structure $\sim$ on $X$, which we fix from now on.

\begin{thm}
\label{thm:walls_trees}
If $d<\frac{5}{24}$, then w.o.p.\ all hypergraphs are embedded trees.
\end{thm}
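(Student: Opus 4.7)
The plan is to assume for contradiction that some hypergraph $\Gamma_{\mathfrak{W}}$ is not an embedded tree, extract a minimal combinatorial failure, and derive a contradiction using the balance property together with Proposition~\ref{prop:isoperimetry}. A non-tree hypergraph contains either an embedded closed cycle or two hypergraph segments sharing both endpoints but otherwise disjoint (a bigon). I would pick such a configuration of minimal size, measured by the number of $2$--cells the hypergraph traverses, and lift it to a reduced disc diagram $D\to X$ in which the two sides of the hypergraph carrier lie on the two sides of $\partial D$.

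Let $C_1,\ldots,C_n$ be the $2$--cells of $D$ traversed by the hypergraph in cyclic order, and let $T_i=\mathcal{T}(C_i,\Gamma_{\mathfrak{W}})$ be the corresponding augmented tiles from Definition~\ref{def:augmented}, lifted to $D$. Using Lemma~\ref{lem:tile_intersection_connected}, Remark~\ref{rem:short_tree}, and assertions (ii)--(iii) of Proposition~\ref{prop:wallconstruction} together with the minimality of the bigon, one should arrange that the $T_i$ do not share $2$--cells, so that by Remark~\ref{rem:cancel} we have
$$\mathrm{Cancel}(Y)\;\geq\;\sum_i\mathrm{Cancel}(T_i)+\tfrac12\sum_i\Big|T_i\cap\bigcup_{j\neq i}T_j\Big|$$
for $Y=\bigcup_i T_i\subset D$. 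By Remark~\ref{rem-tile-size-bound} the size $|Y|\leq 4n$ is uniformly bounded, so Proposition~\ref{prop:isoperimetry} applies to $Y$.

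Next I would use balance: by Proposition~\ref{prop:wallconstruction} and Remark~\ref{rem:smallantipodal}, the entry and exit points of the hypergraph in each $T_i$ are at distance $\geq\mathrm{Bal}(T_i)$ in $T_i^{(1)}$, realised by an honest geodesic inside $T_i$ (Corollary~\ref{cor:short_geodesic}). Transporting these geodesics to $\partial D$ on either side of the hypergraph yields
$$|\partial D|\;\geq\;2\sum_i\mathrm{Bal}(T_i)\;=\;\tfrac{1}{2}\sum_i(|T_i|+1)l\;-\;2\sum_i\mathrm{Cancel}(T_i).$$
Combining this with the standard identity $|\partial D|=|D|l-2\,\mathrm{Cancel}(D)$, the lower bound on $\mathrm{Cancel}(Y)$ above, and the isoperimetric upper bound $\mathrm{Cancel}(Y)\leq(d+\varepsilon)|Y|l$, the resulting inequality forces $d$ above a threshold; tracking that $|T_i|\leq 4$ by Remark~\ref{rem-tile-size-bound} should produce exactly $d\geq\tfrac{5}{24}$, contradicting our hypothesis.

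The main obstacle will be the bookkeeping needed to ensure that the augmented tiles $T_i$ in the minimal bigon genuinely do not share $2$--cells and that pairwise intersections $T_i\cap T_j$ are confined to the situations controlled by Lemmas~\ref{lem:wall_in_subtile},~\ref{lem:mix_gluing} and Corollary~\ref{lem:ant_gluing}. This is where assertions (ii)--(iii) of Proposition~\ref{prop:wallconstruction} and the augmentation in Definition~\ref{def:augmented} enter: when the hypergraph crosses a $2$--cell $C$ of $T-T_c$ without meeting the core, $\mathcal{T}(C,\Gamma_{\mathfrak{W}})=C$ and the antipodal behaviour described in Remark~\ref{rem:smallantipodal} means we can legitimately treat $C$ as a size-$1$ tile in the carrier argument, thereby keeping the cancellation accounting in the balance inequality honest.
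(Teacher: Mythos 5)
There is a genuine gap, and it is exactly the point the paper flags as ``difficult to circumvent'': your argument has no a priori bound on the number $n$ of $2$--cells traversed by the minimal offending cycle or bigon. All of the probabilistic tools you invoke (Theorem~\ref{thm:isoperimetry}, Proposition~\ref{prop:isoperimetry}, Corollary~\ref{cor:rel_notintile}) are statements of the form ``for each fixed $K$, w.o.p.\ there is no bad complex of size $\leq K$''; they cannot be applied to a complex $Y$ of size $\leq 4n$ when $n$ is allowed to grow with $l$, and minimality of the configuration does nothing to bound $n$ independently of $l$ (nor does the boundary length of your diagram help, since $|\partial D|$ is itself of order $n\cdot l$). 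Your sentence ``the size $|Y|\leq 4n$ is uniformly bounded, so Proposition~\ref{prop:isoperimetry} applies'' is therefore unjustified, and without it the whole cancellation-versus-balance computation never gets off the ground. The paper closes this gap by first proving only the bounded-length statement (Proposition~\ref{prop:main}: for each $N$, w.o.p.\ no returning decomposition of length $\leq n\leq N$), then proving Theorem~\ref{thm:quasiisometry} --- hypergraphs are $(\Lambda,cl)$--quasi-isometrically embedded, via hyperbolicity of $X$ with constant linear in $l$ and the local-to-global theorem for quasigeodesics --- and only then observing that a self-intersecting segment has endpoints at distance $\leq\frac12 l$, so quasi-convexity forces $n\leq(2c+1)\Lambda$, which is an $l$--independent bound to which Proposition~\ref{prop:main} applies. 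Some version of this quasi-convexity (or another mechanism converting ``no short bad configuration'' into ``no bad configuration at all'') is indispensable and is entirely missing from your plan.

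Two smaller points. First, your concluding step ``the resulting inequality forces $d\geq\frac{5}{24}$'' is not how the threshold enters the paper: in the proof of Proposition~\ref{prop:main} the bound $d<\frac{5}{24}$ is used through Remark~\ref{rem-tile-size-bound} to force $|Y|\leq 5$, hence all tiles in the configuration have size $1$ or $2$, $D$ is a tree, and the final contradiction is structural --- two consecutive tiles $T_i',T_j'$ with $|T_i'|+|T_j'|\leq 3$ would form a tile and would already have been glued in Construction~\ref{constr:main} --- rather than a single numerical inequality. Second, your estimate $|\partial D|\geq 2\sum_i\mathrm{Bal}(T_i)$ (both sides of the carrier) is not what the paper's bookkeeping gives; the argument there uses one path $P_i$ per tile with $|P_i|\geq\mathrm{Bal}(T_i')$ fed into $\mathrm{Cancel}(Y)$ via Remark~\ref{rem:cancel}, and justifying a two-sided bound would need additional work to ensure the two sides contribute disjointly. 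These could perhaps be repaired, but the missing bound on $n$ is the essential flaw.
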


The proof of Theorem~\ref{thm:walls_trees} is divided into two
parts. In the current section we prove its weaker version, that
hypergraphs of a priori bounded length are embedded trees. We
will complete the proof in Section~\ref{sec:quasi-convexity}.

There is a technical preliminary step to perform. To have
control over a (self-intersecting) hypergraph segment and the
tiles it traverses, we will decompose it into particular
subsegments in Definition~\ref{def:decomposition}. Next, we
will improve the decomposition so it becomes tight, see
Definition~\ref{def:tight} and Proposition~\ref{prop:shrink}.
We recommend the reader to skip the proof of the latter at a
first reading.

\begin{defin}
\label{def:decomposition}
A concatenation $\gamma_1\cdots \gamma_n$ forming a hypergraph segment is its \emph{decomposition of length $n$}
if for each $i=1,\ldots, n$, for one of the $2$--cells $C_i$ traversed by $\gamma_i$, $\gamma_i$ is contained in
$T_i=\mathcal{T}(C_i,\gamma_i\cap C_i)$, which is the augmented tile
for $C_i$.
\end{defin}

\begin{rem}
\label{rem:T_i_unique}
Given a decomposition $\gamma_1\cdots \gamma_n$, the tile $T_i$ (but not $C_i$) is uniquely determined
by $\gamma_i$. Otherwise, if we had $\gamma_i\subset T'_i\neq T_i$ with
$T'_i=\mathcal{T}(C'_i,\gamma_i\cap C'_i)$, by Definition~\ref{def:augmented} and Remark~\ref{rem:core} we would have
$T_i,T_i'\in \mathcal{T}$ and $T_i\cap T'_i=T_c$, which is
their common core. Consequently, we would have $\gamma_i\subset T_c=C_i\cup C_i'$, which would yield $T_i=\mathcal{T}(C_i,\gamma_i\cap
C_i)=T_c=\mathcal{T}(C'_i,\gamma_i\cap C'_i)=T_i'$, a contradiction.
\end{rem}

\begin{defin}
\label{def:returning}
Consider a decomposition $\gamma=\gamma_1\cdots \gamma_n$. Denote the endpoints of $\gamma$ by $x_0$
and $x_n$. We say that $\gamma_1\cdots \gamma_n$ is \emph{returning at $T_0$} for a tile
$T_0\in\mathcal{T}$ if $x_0,x_n\in T_0$ and there is no $T_i$
that contains $\bigcup_j T_j$.
\end{defin}

The main goal of this section is to prove the following.

\begin{prop}
\label{prop:main} Let $d<\frac{5}{24}$. For each $N$ w.o.p.\
there is no decomposition $\gamma_1\cdots \gamma_n$ returning
at tile in $\mathcal{T}$ with $n\leq N$.
\end{prop}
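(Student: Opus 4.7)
We proceed by contradiction. Suppose such a returning decomposition $\gamma_1 \cdots \gamma_n$ exists with $n \leq N$, and let $T_i = \mathcal{T}(C_i, \gamma_i \cap C_i)$ denote the augmented tiles, with $y_{i-1}, y_i$ the endpoints of $\gamma_i$ (so $y_0 = x_0, y_n = x_n \in T_0$ by the returning hypothesis).

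The plan is to apply Proposition~\ref{prop:isoperimetry} to the $2$-subcomplex
\[
Y = T_0 \cup T_1 \cup \cdots \cup T_n \subset X
\]
and derive a contradiction by showing $\mathrm{Cancel}(Y) > (d + \varepsilon)|Y|l$ for some $\varepsilon > 0$. The density assumption $d < \tfrac{5}{24}$ enters first through Remark~\ref{rem-tile-size-bound}, which gives $|T_i| \leq 5$; combined with $n \leq N$, Corollary~\ref{cor:cells_embed} and Lemma~\ref{lem:cells_intersection_connected}, this ensures $Y$ is $(K,K')$-bounded for constants depending only on $N$, so Proposition~\ref{prop:isoperimetry} applies. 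It enters a second time by providing the slack $\tfrac{1}{4}-d > \tfrac{1}{24}$ that must be absorbed by the overlap contributions.

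For the cancellation estimate, the starting point is that each tile satisfies $\mathrm{Cancel}(T_i) > \tfrac{1}{4}(|T_i|-1)l$ by Definition~\ref{def:tile}, and these combine via Remark~\ref{rem:cancel} into a lower bound of roughly $\tfrac{1}{4}(|Y|-n-1)l$ plus pairwise overlap contributions $\tfrac{1}{2}\sum_i |T_i \cap \bigcup_{j\neq i} T_j|$. The crucial new input is the balanced tile-wall structure: by Remark~\ref{rem:smallantipodal} we have $|y_{i-1}, y_i|_{T_i} \geq \mathrm{Bal}(T_i)$ for each $i \geq 1$. Replacing each $\gamma_i$ by a geodesic $\alpha_i$ in $T_i^{(1)}$, and concatenating with a geodesic $\beta \subset T_0^{(1)}$ from $y_n$ to $y_0$, yields a closed loop $\sigma \subset Y^{(1)}$ of length at least $\sum_i \mathrm{Bal}(T_i) + |\beta|$. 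Applying Theorem~\ref{thm:isoperimetry} to a reduced disc diagram for $\sigma$ (whose image lies in $Y$), together with the uniform bound on $|Y|$, forces enough overlap between the $T_i$'s to push $\mathrm{Cancel}(Y)$ over the threshold $(d+\varepsilon)|Y|l$, contradicting Proposition~\ref{prop:isoperimetry}.

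The hard part will be the final bookkeeping: converting the balance inequalities, which measure distances \emph{inside} single tiles, into a lower bound on the pairwise overlaps $|T_i \cap T_j|$ that actually feed into $\mathrm{Cancel}(Y)$, and ensuring the counting is not double-applied. This is further complicated when distinct tiles in $\mathcal{T}$ share $2$-cells; by Remark~\ref{rem:core} this can only happen through a common core of size $2$, and properties (ii) and (iii) of Proposition~\ref{prop:wallconstruction} must be invoked to control how consecutive hypergraph segments can meet near such cores, preventing spurious configurations that would otherwise corrupt the overlap/cancellation estimate.
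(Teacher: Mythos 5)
Your setup (the union complex $Y$, the balance lower bounds from Remark~\ref{rem:smallantipodal}, a loop made of geodesics $\alpha_i$ with a disc diagram for it, and the appeal to Proposition~\ref{prop:wallconstruction}(ii)--(iii) near shared cores) is broadly the paper's, but the source of the final contradiction you propose cannot work, and the step you defer as ``final bookkeeping'' is where the actual argument lies. The best cancellation bound the balance inequalities can produce is essentially $\mathrm{Cancel}(Y)\geq\frac{1}{4}(|Y|-1+|\mathcal C|)l$, where $\mathcal C$ denotes the $2$--cells of the disc diagram lying outside the tiles (this is exactly the computation in Claim~\ref{cla:c=0}). For $d$ close to $\frac{5}{24}$ this exceeds $(d+\varepsilon)|Y|l$ only when $|\mathcal C|>0$ or $|Y|\geq 6$, so Proposition~\ref{prop:isoperimetry} yields only $|\mathcal C|=0$ and $|Y|\leq 5$; in the remaining regime $|Y|\leq 5$ the isoperimetric inequality cannot be violated at all, since complexes of size $\leq 5$ with $\mathrm{Cancel}>\frac{1}{4}(|Y|-1)l$ are precisely tiles and are not excluded. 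The paper's contradiction is therefore not with Proposition~\ref{prop:isoperimetry} but with the termination of Construction~\ref{constr:main}: after further showing that $D$ is a tree and $n>2$, one finds consecutive $T'_i,T'_j$ with $|T'_i|+|T'_j|\leq 3$ whose union is a tile (Claim~\ref{cla:consecutive_tiles}), which would have been glued in Step~1 of the construction. Your plan to ``force enough overlap to push $\mathrm{Cancel}(Y)$ over the threshold'' misses this key idea and cannot close the proof.

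Two further gaps. First, you cannot assert that a disc diagram for your loop $\sigma$ has image inside $\bigcup_i T_i$; the paper instead enlarges $Y$ to include the image of the diagram, introduces the extra cells $\mathcal C$, and \emph{proves} $|\mathcal C|=0$. Relatedly, the balance inequality bounds $|y_{i-1},y_i|_{T_i}$, not any pairwise overlap $|T_i\cap T_j|$ (which can be a single vertex); to turn it into cancellation one needs the boundary arcs $P_i$ of the diagram to be shared edges, which requires passing to a subdiagram with no $2$--cell adjacent to $\alpha_i$ mapped to $T_i$. Second, you work with the decomposition as given, whereas one must first tighten it (Proposition~\ref{prop:shrink}(i)) and shrink each $T_{i+1}$ sharing $2$--cells with $T_i$ to $T'_{i+1}=C_{i+1}$, repairing the lost balance estimate via Proposition~\ref{prop:wallconstruction}(ii) by appending $P_{i+1}$; without this, the pieces fed into Remark~\ref{rem:cancel} share $2$--cells and the additivity underlying your cancellation estimate fails.
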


Proposition~\ref{prop:main} implies the aforementioned weak
version of Theorem~\ref{thm:walls_trees} in view of the
following observation.

\begin{rem}
\label{rem:diagram} If a hypergraph segment $\gamma$
self-intersects in a $2$--cell $C_0$ of $X$, then let
$C_0,C_1,\ldots, C_n,$ $C_{n+1}=C_0$ be consecutive $2$--cells
traversed by the diagonals $\gamma_i\subset \gamma$. Let
$T_0=\mathcal{T}(C_0)$ and for $1\leq i\leq n$ let
$T_i=\mathcal{T}(C_i,\gamma_i)$. No $T_i$ contains all the
others, since hypergraphs in tiles are embedded trees. Thus
$\gamma_1\cdots \gamma_n$ is returning at $T_0$.
\end{rem}

The tiles $T_1, \ldots, T_n$ of a decomposition $\gamma_1\cdots
\gamma_n$ can share $2$--cells. Before we begin the proof of
Proposition~\ref{prop:main} we will modify the decomposition so
that $T_i$ overlap in the following controlled way.

\begin{defin}
\label{def:tight} A decomposition $\gamma_1\cdots \gamma_n$ is
\emph{tight} if for $1\leq i < j\leq n$ the tiles $T_i, T_j$
share no $2$--cells, except for possibly some pairs $T_i,
T_{i+1}\in \mathcal{T}$ with common core $T_c$, in which case
$\gamma_{i+1}$ is exactly a diagonal of $C_{i+1}$ and
intersects $T_c$ only at its starting point.

If the decomposition is returning at $T_0$, then it is
\emph{tight} if the same holds for $0\leq i<j\leq n$.
\end{defin}

Note that in a tight (returning) decomposition if $T_i,T_{i+1}$
share $2$--cells, then $T_{i+1},T_{i+2}$ do not share
$2$--cells. Indeed, otherwise the core $T_c$ of $T_{i+1}$ would
also be the core of $T_i$ and $T_{i+2}$, hence $T_i$ and
$T_{i+2}$ would also share $2$--cells.

\begin{prop}\label{prop:shrink}
\begin{enumerate}[(i)]
\item For any hypergraph segment with a returning
    decomposition of length~$\leq N$, there is a tight
    returning decomposition of length~$\leq N$ of a
    subsegment of it (possibly with reversed orientation).
\item Any hypergraph segment with decomposition of length
    $\leq N$ has a tight decomposition of length $\leq N$,
    or there is a returning decomposition of length~$\leq
    N$ of a subsegment of it.
\end{enumerate}
\end{prop}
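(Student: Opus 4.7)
My plan is to prove both parts simultaneously by induction on the decomposition length $n$. For $n\leq 1$ both tightness conditions are vacuous (there are no pairs $i<j$ to check), so the statement is trivial. For the inductive step I take $\gamma=\gamma_1\cdots\gamma_n$ (with augmented tiles $T_1,\ldots,T_n$, and anchor $T_0$ in part~(i)) and assume it is not tight; I then produce either a strictly shorter decomposition of $\gamma$, or a (possibly reversed) returning subsegment with a decomposition of length $\leq n$, in either case invoking the inductive hypothesis.

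The engine of the proof is a shortening move. Pick a pair $i<j$ violating tightness with $j-i$ minimal: by Remark~\ref{rem:core} the tiles $T_i,T_j$ have the common core $T_c\in\mathcal{T}$, and any shared $2$-cells lie in $T_c$. The hypergraph restricted to each individual tile is a tree by Proposition~\ref{prop:wallconstruction} and Definition~\ref{def:tile-wall}, and these tree structures agree on $T_c$ by Proposition~\ref{prop:wallconstruction}(i). Let $p,q$ denote the initial endpoint of $\gamma_i$ and the terminal endpoint of $\gamma_j$ respectively. I want to replace $\gamma_i\gamma_{i+1}\cdots\gamma_j$ by a path $\delta$ lying inside $T_i\cup T_j$; when this succeeds, $\delta$ consists of at most two subsegments (one in each of $T_i,T_j$, meeting in $T_c$), so the resulting decomposition has length $\leq n-(j-i)+1$, which is strictly smaller whenever $j-i\geq 2$.

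Success of the rerouting corresponds to $p$ and $q$ lying in the same connected component of the hypergraph restricted to $T_i\cup T_j$; since tile-walls are trees and the restrictions are consistent on $T_c$, the tree path $\delta$, if it exists, is unique. If instead $p$ and $q$ sit in different components of the restricted hypergraph, then the global wall connects them only via tiles outside $T_i\cup T_j$, and one checks that the intermediate subsegment $\gamma_i\cdots\gamma_j$ then starts and ends on the same tile-wall of $T_i$ (or $T_c$), so it is itself a returning decomposition at that tile, giving the alternative conclusion of part~(ii) or a shorter returning subsegment in part~(i). A separate case arises when $j=i+1$ but the tolerated exception of Definition~\ref{def:tight} fails; the tree structure in $T_i\cup T_{i+1}$ then lets me reparametrise $\gamma_i\gamma_{i+1}$ so that the single crossing of $T_c$ takes the required form of a clean diagonal of $C_{i+1}$ meeting $T_c$ only at its starting point, without changing the length but strictly reducing the number of violating pairs.

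The main obstacle I anticipate is the case analysis surrounding the shortcut: one must verify that the rerouted $\delta$ is a genuine hypergraph segment admitting a valid decomposition in the sense of Definition~\ref{def:decomposition} (each piece contained in its correctly assigned augmented tile, unique by Remark~\ref{rem:T_i_unique}); that the returning anchor $T_0$ in part~(i) is compatible with the shortening when the violation involves $i=0$, possibly requiring an orientation reversal to keep $T_0$ as the anchor of the output; and that the ``no $T_i$ contains $\bigcup_j T_j$'' condition of Definition~\ref{def:returning} persists, which can be forced by iterating the move since $n$ decreases at each non-reparametrising step.
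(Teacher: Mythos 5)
There is a genuine gap, and it lies at the heart of your argument: the ``rerouting'' move. Replacing $\gamma_i\cdots\gamma_j$ by a tree path $\delta\subset T_i\cup T_j$ produces a decomposition of a \emph{different} hypergraph segment, whereas the proposition demands a tight decomposition of the given segment (part (ii)) or of a genuine subsegment of it (part (i)); this is also what the applications require. You cannot argue that $\delta$ coincides with $\gamma_i\cdots\gamma_j$, since the latter passes through the intermediate tiles $T_{i+1},\ldots,T_{j-1}$ outside $T_i\cup T_j$, and you cannot appeal to the hypergraph being a tree globally --- that is exactly what is being proved downstream. The paper never reroutes: it only ever truncates or merges, using as new endpoints points that already lie on the segment. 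Concretely, when $T_i$ and $T_j$ share $2$--cells and neither contains the other, Proposition~\ref{prop:wallconstruction}(iii) supplies a \emph{unique} edge midpoint $x\in\gamma_j\cap C_j\cap T_c$; either passing to the subsegment of $\gamma$ ending at $x$ (with the anchor replaced by $T_i$) contradicts minimality of the length, or $x$ is forced to be the endpoint $x_i$ (resp.\ $x_n$) and then (iii) again forces $\gamma_{i+1}$ (resp.\ $\gamma_n$) to be exactly the diagonal of $C_{i+1}$ allowed by Definition~\ref{def:tight}. Your sketch never invokes assertion (iii), and without it the claims ``the tree structure lets me reparametrise so the crossing is a clean diagonal'' and ``$\gamma_i\cdots\gamma_j$ starts and ends on the same tile-wall, so it is returning'' have no justification; in particular, if the crossing point differs from $x_i$, no reparametrisation helps --- one must pass to a proper subsegment and change the anchor tile.

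A second, smaller but real omission: you apply Remark~\ref{rem:core} to any non-tight pair $T_i,T_j$, but that remark only covers distinct tiles of $\mathcal{T}$ sharing $2$--cells, i.e.\ after one has excluded containments and the case where an augmented tile is a single $2$--cell. The containment cases ($T_j\subset T_i$ or $T_i\subset T_j$, including $T_j=T_c$ or a shrunk $T_j=C_j$) are precisely where the paper performs the merging of $\gamma_i$ with $\gamma_{i+1}$, the passage to $\gamma_{i+1}\cdots\gamma_{j-1}$ returning at $T_i$, and the delicate $i=0$ adjustments of the anchor; you list these anchor issues as ``anticipated obstacles'' but the proposition's content is exactly in resolving them. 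As written, the induction does not close: the alternative branches (``one checks that\ldots'', ``can be forced by iterating'') are the statements to be proved.
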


\begin{proof}
We prove (i) by contradiction. Let $\gamma_1\cdots \gamma_n$ be
a returning decomposition of a subsegment of the given segment
with minimal length $n\leq N$. Let $T_0$ be the tile at which
it is returning. Denote the endpoints of $\gamma_i$ by
$x_{i-1}$ and $x_i$. After shortening $\gamma_1$ or $\gamma_n$
we can assume that the neighbourhood of $x_0$ in $\gamma_1$
intersects $T_0$ only at $x_0$ and that the neighbourhood of
$x_n$ in $\gamma_n$ intersects $T_0$ only at $x_n$.

We analyse in what situation $T_i$ and $T_j$ might
share $2$--cells, where $i<j$. Suppose first $T_j\subset T_i$.
If $j=i+1>1$, then we could have merged $\gamma_i\cup
\gamma_{i+1}$ into one segment in $T_i$ to decrease $n$. If
$j=1$ and $i=0$, then this would contradict the assumption on $x_0$.
If $j>i+1$, then we could have replaced
$\gamma_1\cdots \gamma_n$ with $\gamma_{i+1}\cdots\gamma_{j-1}$
and $T_0$ with $T_i$ to decrease $n$ as well. If $T_i\subset
T_j$, the argument is the same, except when $i=0$. Then if $j=1$, we could have passed to
$\gamma_2\cdots \gamma_n$ replacing $T_0$ with $T_1$. If $j>1$,
we could restrict to $\gamma_{1}\cdots\gamma_{j-1}$ replacing
$T_0$ with $T_j$.

This shows that if $T_i,T_j$ share $2$--cells, then neither is
contained in the other. Hence they cannot be single $2$--cells,
and thus are tiles of $\mathcal{T}$ by
Definition~\ref{def:augmented}. By Remark~\ref{rem:core},
$T_i,T_j$ share their core $T_c$. Note that $C_j$ is outside
$T_c$, since otherwise we would have $T_j=T_c\subset T_i$. By
Proposition~\ref{prop:wallconstruction}(iii) there is a unique
edge midpoint $x\in \gamma_j$ in $C_j\cap T_c$.

Consider first $j=i+1$. Without loss of generality we can
assume that $C_{i+1}$ is the first $2$--cell in $T_{i+1}-T_c$
traversed by $\gamma_{i+1}$. Moreover, we can move to
$\gamma_i$ the part of $\gamma_{i+1}$ preceding
$\gamma_{i+1}\cap C_{i+1}$, which lies in $T_c$. Then $x=x_i$,
otherwise we could replace $T_0$ with $T_i$ and pass to
$x_i\cdots x\subset \gamma_{i+1}$, which decreases $n$ unless
$i=0, n=1$ and $x=x_1$ in which case we just interchange $x_0$
with $x_1$. By Proposition~\ref{prop:wallconstruction}(iii) the
edge midpoint $y\in \gamma_{i+1}\cap C_{i+1}$ distinct from $x$
equals $x_{i+1}$. Hence $\gamma_{i+1}$ is a diagonal of
$C_{i+1}$ as in Definition~\ref{def:tight}.

If $j>i+1$, then passing to the hypergraph segment
$\gamma_{i+1}\cdots x$, we obtain a contradiction with
minimality of $n$, unless $i=0, j=n$ and $x=x_n$. Then again by
Proposition~\ref{prop:wallconstruction}(iii), $\gamma_n$ is a
diagonal of $C_n$. Note that, unless $n=1$, it cannot
simultaneously happen that $T_0,T_1$ share $2$--cells and
$T_0,T_n$ share $2$--cells. Otherwise $T_1$ and $T_n$ would
also share the $2$--cells of the common core $T_c$, which would
yield $n=2$ and $x=C_2\cap T_c$ being simultaneously equal to
$x_1$ and $x_2$, a contradiction. In particular, by possibly
reversing at the beginning of the procedure the order of
$\gamma_i$, we can assume that $T_0$ and $T_n$ do not share
$2$--cells, unless $n=1$, which case was discussed above. Thus
the returning decomposition $\gamma_1\cdots \gamma_n$ we
obtained is tight, as desired.

The proof of (ii) is similar. Firstly, by merging some of the
$\gamma_i$ as above we can assume that $T_i$ does not contain
$T_j$ for $i\neq j$. Indeed, if $j=i+1$, then we can merge
$\gamma_i$ with $\gamma_{i+1}$. If $j>i+1$, we obtain a
decomposition $\gamma_{i+1}\cdots \gamma_{j-1}$ returning at
$T_i$, and we are done. Secondly, we can also assume that $T_i$
and $T_j$ share $2$--cells only if $j=i+1$, since otherwise we
also obtain a returning decomposition of a subsegment. Finally,
for $T_{i},T_{i+1}$ sharing $2$--cells, similarly as in the
proof of (i), after moving part of $\gamma_{i+1}$ to
$\gamma_{i}$, $\gamma_{i+1}$ is a diagonal of $C_{i+1}$
satisfying the condition in Definition~\ref{def:tight}.
\end{proof}

Here is the final piece of terminology used in the proof of
Proposition~\ref{prop:main}.

\begin{defin}
\label{def:bounded diagram} Let $\gamma_1\cdots \gamma_n$ be a
decomposition returning at $T_0\in\mathcal{T}$. Denote the
endpoints of $\gamma_i$ by $x_{i-1}$ and $x_i$. A disc diagram
$D\rightarrow X$ \emph{bounded} by $\gamma_1\cdots \gamma_n$
returning at $T_0$ is a disc diagram for
$\alpha_0\alpha_1\cdots \alpha_n$, where $\alpha_i$ is mapped
to $T_i$ and for $i\neq 0$ its endpoints are mapped to
$x_{i-1},x_{i}$. Thus we allow half-edge spurs at $\partial D$.

Likewise, a disc diagram $D\rightarrow X$ \emph{bounded} by a
decomposition $\gamma_1\cdots \gamma_n$ and a path $\alpha$ in
$X^{(1)}$ is a disc diagram for $\alpha\alpha_1\cdots
\alpha_n$, where $\alpha_i$ is mapped to $T_i$ and its
endpoints are mapped to $x_{i-1},x_{i}$.
\end{defin}

\begin{rem}
\label{rem:diagram2} Any decomposition $\gamma_1\cdots
\gamma_n$ returning at $T_0$ bounds a disc diagram: It suffices
to consider arbitrary paths $\alpha_i$ embedded in $T_i$ joining $x_{i-1},x_{i}$
(modulo $n+1$) and a disc diagram for $\alpha_0\alpha_1\cdots
\alpha_n$.

Similarly, for any path $\alpha$ embedded in $X^{(1)}$ joining the
endpoints of a decomposition $\gamma_1\cdots
\gamma_n$, there is a disc diagram bounded by $\gamma_1\cdots
\gamma_n$ and $\alpha$.
\end{rem}

\begin{proof}[Proof of Proposition~\ref{prop:main}]
By Proposition~\ref{prop:shrink}(i) it suffices to show that
for any $n\leq N$ there is no tight decomposition $\gamma_1
\cdots \gamma_n$ returning at a tile $T_0 \in \mathcal{T}$.
Suppose that there is such a decomposition. By
Remark~\ref{rem:diagram2}, it is bounded by a diagram
$D\rightarrow X$. After passing to a subdiagram we can assume
that there is no $2$--cell in $D$ mapped to $T_i$ adjacent to
$\alpha_i$.

For every $T_{i+1}$ sharing a $2$--cell with $T_i$, replace the
tile $T_{i+1}$ with $T_{i+1}' = C_{i+1}$ and call it
\emph{shrunk}. Otherwise if $T_{i+1}$ shares no $2$-cells with
$T_i$ we define $T_{i+1}' = T_{i+1}$.

Let $Y\subset X$ be the subcomplex that is the union of $T_0,
T_1', \ldots, T_n'$ and the image of $D$. Let $\mathcal C$ be
the $2$--cells of $Y$ outside $T_0, T_1', \ldots, T_n'$. For
$i=1,\ldots, n$, let $P_i\subset T'_i$ be the span of the image
of $\alpha_i$.

For non-shrunk $T'_i=T_i$, by Remark~\ref{rem:smallantipodal}
we have $|P_i|\geq \mathrm{Bal}(T'_i)$. If $T'_i=C_i$ is
shrunk, the same is true except for the case where the edge
midpoints $x_{i-1},x_i\in C_i$ are at distance $<\frac{1}{2}l$.
In that case however, by
Proposition~\ref{prop:wallconstruction}(ii), the edge midpoint
$x_{i-1}'\in C_i$ antipodal to $x_{i}$ lies in $T_{i-1}$,
coinciding with $T'_{i-1}$ if $i>1$. We then append $P_i$ by an
edge-path joining $x_{i-1}'$ to $x_{i-1}$ in $\partial C_i\cap
T_{i-1}$, for which we keep the notation $P_i$ and which has
now at least $\frac{1}{2}l$ edges so that we trivially have
$|P_i|\geq \mathrm{Bal}(T'_i)$.

\begin{claim}
\label{cla:c=0}
We have $|\mathcal C|=0$ and $|Y|\leq 5$.
\end{claim}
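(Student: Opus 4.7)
The plan is to combine a lower bound on $\mathrm{Cancel}(Y)$ coming from the balanced tile-walls with the upper bound from Proposition~\ref{prop:isoperimetry} applied to $Y$. Since tiles have uniformly bounded size by Remark~\ref{rem-tile-size-bound}, $n\leq N$, and $|D|$ (hence $|\mathcal{C}|$) is uniformly bounded via Theorem~\ref{thm:isoperimetry} because $|\partial D|$ is controlled by $n$ and the tile sizes, the complex $Y$ has uniformly bounded $(K,K')$--type, so Proposition~\ref{prop:isoperimetry} is applicable.

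The first key point is that $T_0, T_1',\ldots,T_n'$ pairwise share no $2$--cells: by Remark~\ref{rem:core} two distinct tiles of $\mathcal{T}$ share $2$--cells only through a common core $T_c$ of size $2$; in a tight returning decomposition this only occurs for a consecutive pair $(T_i,T_{i+1})$, and the shrinking rule then replaces $T_{i+1}$ by the single $2$--cell $C_{i+1}$, which lies outside $T_c\subset T_i$. Applying Remark~\ref{rem:cancel} to $\{T_0,T_1',\ldots,T_n'\}\cup\mathcal{C}$ yields
\begin{align*}
\mathrm{Cancel}(Y)\ \geq\ \sum_{i=0}^n \mathrm{Cancel}(T_i') + \frac{1}{2}\sum_{i=1}^n|P_i| + \frac{1}{2}|\mathcal{C}|l,
\end{align*}
where each edge of $P_i$ is shared with another piece (no $2$--cell of $D$ adjacent to $\alpha_i$ maps to $T_i$), and each edge of $\partial C$ for $C\in\mathcal{C}$ is shared (since $\partial D$ lies in $\bigcup_i T_i'$).

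Next, combining $|P_i|\geq\mathrm{Bal}(T_i')$ (established in the paragraph preceding the claim) with the identity $\mathrm{Cancel}(T_i')+\mathrm{Bal}(T_i')=\frac{1}{4}(|T_i'|+1)l$ and the tile bound $\mathrm{Cancel}(T_i')\geq\frac{1}{4}(|T_i'|-1)l$ (valid also for single $2$--cells) gives $\mathrm{Cancel}(T_i')+\frac{1}{2}\mathrm{Bal}(T_i')\geq\frac{1}{4}|T_i'|l$ for $i\geq 1$. Adding $\mathrm{Cancel}(T_0)\geq\frac{1}{4}(|T_0|-1)l$ and using $|Y|=\sum_i|T_i'|+|\mathcal{C}|$ yields
\begin{align*}
\mathrm{Cancel}(Y)\ \geq\ \frac{1}{4}|Y|l+\frac{1}{4}(|\mathcal{C}|-1)l.
\end{align*}

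Finally, Proposition~\ref{prop:isoperimetry} gives w.o.p.\ $\mathrm{Cancel}(Y)<(d+\varepsilon)|Y|l$. Choosing $\varepsilon$ small so that $d+\varepsilon<\frac{5}{24}$ and combining the two bounds yields $\frac{1}{24}|Y|<\frac{1}{4}(1-|\mathcal{C}|)$, i.e.\ $|Y|<6(1-|\mathcal{C}|)$. Since $|Y|\geq 1$, this forces $|\mathcal{C}|=0$, and then $|Y|<6$, i.e., $|Y|\leq 5$. The delicate point is that the cells of $\mathcal{C}$ contribute $\frac{1}{2}l$ each to cancellation while contributing only $1$ to $|Y|$, which is the excess that both rules out $\mathcal{C}\neq\emptyset$ and, given the sharper threshold $d<\frac{5}{24}$, caps $|Y|$ at $5$.
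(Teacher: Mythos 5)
Your proof is correct and follows essentially the same route as the paper: applying Remark~\ref{rem:cancel} to $\{T_0\}\cup\{T_i'\}\cup\mathcal{C}$, using $|P_i|\geq\mathrm{Bal}(T_i')$ together with the definition of balance and the tile inequality to get $\mathrm{Cancel}(Y)\geq\frac{1}{4}(|Y|-1+|\mathcal{C}|)l$, establishing uniform boundedness of $Y$ via Theorem~\ref{thm:isoperimetry} so that Proposition~\ref{prop:isoperimetry} applies, and then concluding $|\mathcal{C}|=0$ and $|Y|\leq 5$ from $d<\frac{5}{24}$. Your explicit verification that the shrunk tiles share no $2$--cells, and the explicit final numeric step (which the paper delegates to Remark~\ref{rem-tile-size-bound}), are just presentational elaborations of the same argument.
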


\begin{proof}
We bound the cancellation in $Y$ from below using Remark~\ref{rem:cancel} with $\{Y_i\}=\{T_0\}\cup\{T_i'\}\cup\mathcal C$.

\begin{align*}
\mathrm{Cancel}(Y)&\geq \mathrm{Cancel}(T_0)+\sum_{i=1}^n\mathrm{Cancel}(T_i')+\frac{1}{2}\Big(\sum_{i=1}^n|P_i|+|\mathcal C|l\Big)\\
&\geq \mathrm{Cancel}(T_0)+
\sum_{i=1}^n\bigg(\frac{1}{2}\mathrm{Bal}(T'_i)+\mathrm{Cancel}(T'_i)\bigg)+\frac{1}{2}|\mathcal C|l\\
&=\mathrm{Cancel}(T_0)+
\sum_{i=1}^n\bigg(\frac{1}{8}(|T_i'|+1)l+\frac{1}{2}\mathrm{Cancel}(T'_i)\bigg)+\frac{1}{2}|\mathcal C|l\\
&\geq\frac{1}{4}(|T_0|-1)l+
\frac{1}{8}\sum_{i=1}^n\big((|T_i'|+1)l+(|T_i'|-1)l\big)+\frac{1}{2}|\mathcal C|l\\
&= \frac{1}{4}\big(|Y|-1+|\mathcal C|\big)l.
\end{align*}

Since $n\leq N$, and tiles of $\mathcal T$ have size $\leq 4$,
the quantity $|\partial D|/l$ is uniformly bounded. By
Theorem~\ref{thm:isoperimetry}, the size $|D|$ is uniformly
bounded, and so is $|Y|$ as well. By
Proposition~\ref{prop:isoperimetry}, we have $|\mathcal C|=0$,
and by the calculation in Remark~\ref{rem-tile-size-bound} we
have $|Y|\leq 5$, as desired.
\end{proof}

\begin{claim}
\label{cla:tree}
We have $|D|=0$, i.e.\ $D$ is a tree.
\end{claim}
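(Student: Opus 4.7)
The plan is to argue by contradiction, assuming $|D| \geq 1$ and exploiting the strong structural information already obtained: by Claim~\ref{cla:c=0} every $2$-cell of $D$ maps into some $T_i'$, so in fact $Y = T_0 \cup T_1' \cup \cdots \cup T_n'$ and the image of $D$ lies entirely in this union of tiles. Moreover, by our initial reduction (``no $2$-cell in $D$ mapped to $T_i$ adjacent to $\alpha_i$''), any $2$-cell $c$ of $D$ adjacent to the boundary arc $\alpha_i$ must map to some $T_j'$ with $j \neq i$, and the image of $c$ is a single $2$-cell $C^\ast$ of $X$ that is contained in $T_j'$ and shares at least one edge with $\alpha_i \subset T_i'$.

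The first main step is to locate $c$. Since $C^\ast \subset T_j'$ shares an edge with $T_i'$, we obtain a nontrivial intersection $T_i' \cap T_j' \neq \emptyset$. By tightness (Definition~\ref{def:tight}), the only way distinct tiles in a tight (returning) decomposition can share cells is when $\{i,j\} = \{k,k+1\}$ for some $k$, with $T_k,T_{k+1}\in\mathcal{T}$ sharing a common core $T_c$, and in that case $T_{k+1}' = C_{k+1}$ is shrunk and $\gamma_{k+1}$ is exactly a diagonal of $C_{k+1}$ meeting $T_c$ only at its starting point. So without loss of generality $j = i+1$, $T_{i+1}' = C_{i+1}$, and the 2-cell $C^\ast$ to which $c$ maps is precisely $C_{i+1}$, which lies in $T_{i+1}\setminus T_c$.

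The second main step is to derive a contradiction from this rigid configuration. Since $c$ is adjacent to $\alpha_i \subset T_i'$ along an edge of $C_{i+1}$, that edge lies in $T_i \cap C_{i+1} \subset T_i \cap T_{i+1} = T_c$, so $\alpha_i$ meets the core $T_c$ along a nontrivial arc. However, $\gamma_{i+1}$ is a diagonal of $C_{i+1}$ meeting $T_c$ only at the edge midpoint $x_i$; combined with Proposition~\ref{prop:wallconstruction}(iii), this restricts the hypergraph segment $\gamma_i$ to have its endpoint $x_i \in C_{i+1} \cap T_c$ with the antipodal endpoint fixed in $T_i$, forcing $\alpha_i$ to be reroutable across $c$ while remaining inside $T_i'$. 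Replacing $\alpha_i$ by this rerouted path yields a disc diagram $D'$ with $|D'| < |D|$, $|\mathcal{C}(D')| = 0$, and the same bounding decomposition, contradicting the minimality of our chosen $D$ (which we may assume among all diagrams with $|\mathcal{C}|=0$). A symmetric argument handles $j = i-1$.

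The step I expect to be the main obstacle is verifying that the rerouting of $\alpha_i$ across $c$ stays inside $T_i'$: this is automatic when $T_i' = T_i$, but when $T_i'$ is shrunk one must carefully use Proposition~\ref{prop:wallconstruction}(iii) and the fact that the shared edge lies in $T_c \subset T_i$ to confirm that both the old and the new segment of $\alpha_i$ sit in $C_i = T_i'$. Once $|D| = 0$ is established the proof of Proposition~\ref{prop:main} will close in the usual way, since a tree diagram forces the closed word $\alpha_0\alpha_1\cdots\alpha_n$ to backtrack completely, contradicting the lower bounds $|P_i|\geq \mathrm{Bal}(T_i')>\frac{1}{4}l$.
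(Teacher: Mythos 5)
Your proposal diverges from the paper's argument and has a genuine gap at its first main step. You deduce from the adjacency of a $2$--cell $c$ of $D$ to $\alpha_i$ that $T_i'$ and $T_j'$ ``share cells'' and then invoke tightness (Definition~\ref{def:tight}) to force $j=i\pm1$ with the shrunk-tile/core configuration. But tightness only forbids distinct tiles of the decomposition from sharing \emph{$2$--cells}; it says nothing about tiles intersecting along edges or paths, which happens all the time (consecutive tiles already meet at least in the point $x_i$, and non-consecutive ones may overlap in $1$--dimensional pieces as well). All the adjacency of $c$ gives you is that the $2$--cell $C^\ast\subset T_j'$ shares an edge with (the image of) $\alpha_i\subset T_i'$, and this is compatible with any $j$. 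So the localization to a consecutive, shrunk tile fails, and with it the rigid configuration on which your second step rests. The rerouting step is also problematic on its own terms: the paper never sets up a minimal choice of $D$ in the sense you need, and since $c$ was chosen precisely \emph{not} to map into $T_i$, the edges of $c$ other than the one on $\alpha_i$ need not lie in $T_i$ at all, so the rerouted arc may fail to satisfy the defining condition of a bounded diagram (Definition~\ref{def:bounded diagram}) that $\alpha_i$ maps into $T_i$.

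The paper's proof is instead a short cancellation count, in the same spirit as Claim~\ref{cla:c=0}: if $|D|>0$, pick a connected component $T$ of the $2$--cells of $D$ lying in the preimage of some $T_i'$ (such exists since $|\mathcal C|=0$). Its boundary maps to an immersed closed path in $X^{(1)}$, so by Corollary~\ref{cor:no_short_image} its image has at least $l$ edges; replacing $P_i$ by this image in the estimate of Claim~\ref{cla:c=0} gives $\mathrm{Cancel}(T_i')+\frac12|P_i|\geq\frac14(|T_i'|+1)l$ instead of the $\frac14|T_i'|l$ used before, i.e.\ an extra $\frac14 l$, pushing $\mathrm{Cancel}(Y)$ up to $\frac14|Y|l$ and contradicting Proposition~\ref{prop:isoperimetry}. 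If you want to salvage your approach you would need a completely different way to control which tiles the $2$--cells of $D$ can map to; the cancellation argument avoids this issue entirely.
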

\begin{proof}
Otherwise, consider a component $T$ in $D$ of the $2$--cells in
the preimage of some $T'_i$. In the calculation in the proof of
Claim~\ref{cla:c=0} we can thus replace $P_i$ with the image of
$\partial T$ in $Y$. By Corollary~\ref{cor:no_short_image} we
have now $|P_i|\geq l$. Thus
$\mathrm{Cancel}(T'_i)+\frac{1}{2}|P_i|\geq
\frac{1}{4}(|T'_i|+1)l$. On the other hand, the term
$\mathrm{Cancel}(T'_i)+\frac{1}{2}|P_i|$ was estimated in the
proof of Claim~\ref{cla:c=0} only by $\frac{1}{4}|T'_i|l$,
which gives an extra $\frac{1}{4}l$ which violates
Proposition~\ref{prop:isoperimetry}.
\end{proof}

Note that $n>1$ by Lemma~\ref{lem:walls_do_not_backtrack} (and Proposition~\ref{prop:wallconstruction}(ii) if $T_1'$ is shrunk).

\begin{claim}
\label{cla:p>2}
We have $n>2$.
\end{claim}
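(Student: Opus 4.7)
Plan. Assume for contradiction that $n=2$, so we have $x_0,x_2\in T_0$ and $x_1\in T_1'\cap T_2'$, with spans $P_1,P_2$ satisfying $|P_i|\geq \mathrm{Bal}(T_i')>\tfrac{l}{4}$ by Remark~\ref{rem:smallantipodal}. Since $D$ is a tree by Claim~\ref{cla:tree}, the closed boundary walk $\alpha_0\alpha_1\alpha_2$ freely reduces to the trivial loop in $X^{(1)}$, and thus the three spans $P_0,P_1,P_2$ together trace a common tree in $Y^{(1)}$ with each edge covered exactly twice, each time by two distinct $P_j$. Because $P_i\subset T_i'$, any edge shared between $P_i$ and $P_j$ must lie in $T_i'\cap T_j'$. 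Writing $a=|T_0\cap T_1'|$, $b=|T_0\cap T_2'|$, $c=|T_1'\cap T_2'|$, this bookkeeping gives $|P_1|\leq a+c$ and $|P_2|\leq b+c$, and combined with balance,
\[
a+b+2c \;\geq\; \mathrm{Bal}(T_1')+\mathrm{Bal}(T_2') \;>\; \tfrac{l}{2}.
\]

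Next I would run the isoperimetric calculation of Claim~\ref{cla:c=0} on $Y$, but this time feeding in the full pairwise overlap via Remark~\ref{rem:cancel} rather than only the single-sided span bound. Together with $\mathrm{Cancel}(T_i')\geq\tfrac{1}{4}(|T_i'|-1)l$ and Proposition~\ref{prop:isoperimetry}, this yields an upper bound on $a+b+c$ of the order of $\tfrac{3l}{4}$. Since $c<\tfrac{l}{2}$ by Remark~\ref{rem:short_tree} and $a+b+2c>\tfrac{l}{2}$, at least one of the three overlaps must exceed $\tfrac{l}{4}$, and by Remark~\ref{rem:gluingtiles} the corresponding pair among $\{T_0,T_1',T_2'\}$ forms a tile. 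Since in a tight decomposition $T_0,T_1',T_2'$ are distinct pieces with $|T_i'|\leq 4$ (Remark~\ref{rem-tile-size-bound}, using $d<\tfrac{5}{24}$), the corresponding Step~1 merging condition $|T|+|T'|\leq 4$ in Construction~\ref{constr:main} would apply, contradicting the fact that these tiles survived as distinct members of $\mathcal T$ (or were the source of the augmentation producing $T_i'$).

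The main obstacle is the case analysis for the three sharing patterns permitted by tightness: (a) no sharing, (b) $T_0,T_1$ sharing their core, and (c) $T_1,T_2$ sharing their core. In cases (b), (c) one has a shrunk $T_i'=C_i$ for which $\mathrm{Bal}(T_i')=\tfrac{l}{2}$ is maximal, but $\sim_{C_i}$ need not be antipodal, so the tile-wall structure on $C_i$ must be analysed via Proposition~\ref{prop:wallconstruction}(ii)--(iii) to locate the endpoints of $\gamma_i$ and ensure that the argument of Remark~\ref{rem:smallantipodal} still applies. In each case one must verify that the pair whose overlap exceeds $\tfrac{l}{4}$ satisfies the hypothesis $|T|+|T'|\leq 4$ of Step~1, so that Remark~\ref{rem:gluingtiles} genuinely yields the contradiction with the construction of~$\mathcal T$.
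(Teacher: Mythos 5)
Your plan assembles the right ingredients, but the pivotal deduction is a non sequitur. From the three facts you actually establish --- $a+b+2c\geq \mathrm{Bal}(T_1')+\mathrm{Bal}(T_2')>\tfrac{l}{2}$, the isoperimetric bound $a+b+c\lesssim\tfrac{3}{4}l$, and $c<\tfrac{1}{2}l$ --- it does \emph{not} follow that one of $a,b,c$ exceeds $\tfrac{1}{4}l$: take $a=b=c=\tfrac{l}{5}$, which satisfies all three inequalities. The loss occurs when you replace $\mathrm{Bal}(T_i')$ by the crude bound $>\tfrac{1}{4}l$; for $|T_i'|\geq 2$ the balance can be close to $\tfrac{1}{4}l$ and the numbers no longer close. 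The correct move (and essentially what the paper does) is to keep the balance terms symbolically, $\mathrm{Cancel}(T_i')+|P_i|\geq\tfrac{1}{4}(|T_i'|+1)l$, and to use the fact that $D$ is a tree so that every edge of $P_1\cup P_2$ is covered by two \emph{distinct} pieces among $T_0,T_1',T_2'$ and hence contributes a full unit to $\mathrm{Cancel}(Y)$, the only double-counting being along $P_1\cap P_2\subset T_1'\cap T_2'$. This yields $\mathrm{Cancel}(Y)\geq\mathrm{Cancel}(T_0)+\sum_{i=1}^2\big(\mathrm{Cancel}(T_i')+|P_i|\big)-|P_1\cap P_2|\geq\tfrac{1}{4}(|Y|+1)l-|T_1'\cap T_2'|$, so Proposition~\ref{prop:isoperimetry} forces specifically $|T_1'\cap T_2'|>\tfrac{1}{4}l$ (the paper phrases this as a case split at $\tfrac14 l$, with the cancellation computation handling the case $|T_1'\cap T_2'|<\tfrac14 l$). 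Note that your generic application of Remark~\ref{rem:cancel} with halved overlap contributions only reproduces the bound $\tfrac14(|Y|-1)l$ already used in Claim~\ref{cla:c=0}, which is not a contradiction.

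Pinning the large overlap down to the pair $T_1',T_2'$ matters for the endgame as well. Your conclusion ``some pair has overlap $>\tfrac14 l$, hence would have been glued'' is clean only for $T_1',T_2'$, where $|T_1'|+|T_2'|\leq 4$ (from $|Y|\leq 5$), at most one is shrunk, and they share no $2$--cells by tightness, so Remark~\ref{rem:gluingtiles} together with Steps~1--2 of Construction~\ref{constr:main} gives the contradiction. If instead the large overlap were $T_0\cap T_1'$, tightness permits $T_0$ and $T_1$ to share their core, and $T_1'$ may be a shrunk cell $C_1\notin\mathcal{T}$; then neither Remark~\ref{rem:gluingtiles} nor the gluing steps apply directly, and no immediate contradiction results. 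You flag this case analysis as ``the main obstacle'' but do not carry it out; with the sharpened estimate above it simply never arises.
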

\begin{proof}
Otherwise, by Claim~\ref{cla:tree}, the disc diagram $D$ is a
tripod. If $|T'_1\cap T'_2|\geq \frac{1}{4}l$, then since
$|T'_1|+|T'_2|\leq 4$, the tiles $T'_1$ and $T'_2$ (one of
which could be shrunk) would have been glued into one tile in
Construction~\ref{constr:main}, which is a contradiction.
Otherwise,
\begin{align*}
\mathrm{Cancel}(Y)&\geq \mathrm{Cancel}(T_0)+\sum_{i=1}^2 \mathrm{Cancel}(T_i')+\sum_{i=1}^2 |P_i|-\frac{1}{4}l\\
&\geq\frac{1}{4}(|T_0|-1)l +\sum_{i=1}^2 \frac{1}{4}(|T_i'|+1)l-\frac{1}{4}l=\frac{1}{4}|Y|l,
\end{align*}
which contradicts Proposition~\ref{prop:isoperimetry}.
\end{proof}

\begin{claim}
\label{cla:consecutive_tiles} There is $1\leq i\leq n$ and
$j=i\pm 1$ (modulo $n+1$) such that $T'_i\cup T'_j$ is a tile.
\end{claim}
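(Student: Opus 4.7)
The plan is to argue by contradiction. Assume no consecutive pair in the cyclic arrangement $T_0 = T'_0, T'_1, \ldots, T'_n$ (indices modulo $n+1$) has union forming a tile. By the tightness of the decomposition and the shrinking step in which $T_{i+1}$ was replaced by $C_{i+1}$ whenever it shared a 2-cell with $T_i$, the subcomplexes $T_0, T'_1, \ldots, T'_n$ pairwise share no 2-cells. Hence, from $\mathrm{Cancel}(T'_i \cup T'_j) = \mathrm{Cancel}(T'_i) + \mathrm{Cancel}(T'_j) + |T'_i \cap T'_j|$ (Remark~\ref{rem:cancel}), Definition~\ref{def:tile}, and the tile inequality $\mathrm{Cancel}(T'_k) > \frac{1}{4}(|T'_k|-1)l$, the hypothesis translates into a strict upper bound $|T'_i \cap T'_j| < \frac{1}{4}l$ for every consecutive pair.

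The strategy is to sharpen the cancellation estimate of Claim~\ref{cla:c=0}, which gave $\mathrm{Cancel}(Y) \geq \frac{1}{4}(|Y|-1)l$, to produce $\mathrm{Cancel}(Y) \geq \frac{1}{4}|Y|l$, contradicting Proposition~\ref{prop:isoperimetry}. The required surplus of $\frac{1}{4}l$ will be extracted from the following tension: each path $P_i \subset \partial T'_i$ with $1 \leq i \leq n$ has length $|P_i| \geq \mathrm{Bal}(T'_i) > \frac{1}{4}l$, lies inside $T'_i \cap \bigcup_{j \neq i} Y_j$ (since $|D|=0$ and $|\mathcal{C}|=0$), and connects $x_{i-1} \in T'_i \cap T'_{i-1}$ to $x_i \in T'_i \cap T'_{i+1}$. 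Under our hypothesis the two consecutive intersections at the ends of $P_i$ have sizes totalling $< \frac{1}{2}l$, which fails to accommodate the entirety of $P_i$ in a number of configurations. This forces $P_i$ either to extend into a non-consecutive intersection $T'_i \cap Y_k$ (uncounted in the estimate of Claim~\ref{cla:c=0}) or to cross an edge already shared by two cells (producing $\deg(e) \geq 3$, whose contribution $\deg(e)-1$ to $\mathrm{Cancel}(Y)$ exceeds what the $|P_i|$-based estimate extracts). Summing these surplus contributions around the cycle delivers the missing $\frac{1}{4}l$.

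The main obstacle will be the case analysis. From Claim~\ref{cla:c=0} and Claim~\ref{cla:p>2} the only possibilities are $n = 3$ with $|Y| \in \{4, 5\}$ or $n = 4$ with $|Y| = 5$, and in each case cell sizes $|T_0|, |T'_i| \in \{1, 2\}$ need to be examined. Particular care is required for shrunk $T'_i = C_i$: there $P_i$ was augmented via Proposition~\ref{prop:wallconstruction}(ii) so that one endpoint is rerouted through $\partial C_i \cap T_{i-1}$, which alters which consecutive intersection the endpoint lies in and hence the bookkeeping of the surplus cancellation. The bulk of the work is confirming, configuration by configuration, that the refined estimate indeed gains the full $\frac{1}{4}l$.
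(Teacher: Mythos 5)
Your approach has a genuine gap: the contradiction you aim for cannot be extracted from the ingredients you list. Under your hypothesis (no consecutive union is a tile) the available data are only: $|P_i|\geq \mathrm{Bal}(T'_i)$, consecutive overlaps $\leq\frac{1}{4}l$, and Remark~\ref{rem:cancel}. These are all simultaneously consistent with $\mathrm{Cancel}(Y)$ being exactly $\frac{1}{4}(|Y|-1)l$, so no strengthening of Claim~\ref{cla:c=0} to $\frac{1}{4}|Y|l$ is forced. Concretely, take $n=3$ with $T_0,T'_1,T'_2,T'_3$ four distinct single $2$--cells, $x_0=x_3$, and $D$ a tripod with three legs of length $\frac{1}{4}l$, where $\alpha_0$ is degenerate and each $\alpha_i$ ($i=1,2,3$) runs over two legs: then $|P_i|=\frac{1}{2}l=\mathrm{Bal}(T'_i)$, every consecutive overlap is $\leq\frac{1}{4}l$, the only non-consecutive overlap ($T'_1\cap T'_3$) has length $\frac{1}{4}l$, there are no edges of degree $\geq 3$ (beyond one point), and the total cancellation accounted for is $\frac{3}{4}l=\frac{1}{4}(|Y|-1)l$. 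Your two proposed surplus mechanisms do not fire here; moreover, the first one rests on a bookkeeping error: edges of $P_i$ lying in a non-consecutive intersection $T'_i\cap Y_k$ are \emph{not} uncounted in Claim~\ref{cla:c=0} --- that estimate already charges $\frac{1}{2}|P_i|$ regardless of where $P_i$ sits, so spillover into non-consecutive intersections creates no extra term. (Also, your worry about shrunk $T'_i$ is vacuous at this stage: since $|Y|\leq 5$ and $n\geq 3$, all pieces have size $1$ or $2$, so none is shrunk and no $P_i$ is appended.)

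The missing idea is structural, not quantitative, and it is exactly what the paper uses: since $D$ is a tree (Claim~\ref{cla:tree}), its boundary walk traverses each edge twice and the resulting pairing is non-crossing, so some $\alpha_i$ with $1\leq i\leq n$ is contained in $\alpha_{i-1}\cup\alpha_{i+1}$. Choosing $j\in\{i-1,i+1\}$ with the larger overlap gives $|P_i\cap P_j|\geq\frac{1}{2}|P_i|\geq\frac{1}{2}\mathrm{Bal}(T'_i)$, and then a direct computation of $\mathrm{Cancel}(T'_i\cup T'_j)$ alone (not of all of $Y$) shows this single consecutive union satisfies the tile inequality. In other words, the claim is proved positively by locating one heavy consecutive overlap via the tree combinatorics, rather than by contradiction via a global isoperimetric surplus; without that localisation step your case analysis cannot close, as the tripod configuration shows.
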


\begin{proof}
By Claim~\ref{cla:c=0} we have $|Y|\leq 5$ and by
Claim~\ref{cla:p>2} we have $n\geq 3$, so that the values of
$|T_0|, |T'_i|$ are $1$ or $2$. In particular, none of $T_i'$
is shrunk and we do not append $P_i$. By Claim~\ref{cla:tree},
the disc diagram $D$ is a tree. Thus there is $1\leq i\leq n$
such that $\alpha_i$ is contained in $\alpha_{i-1}\cup
\alpha_{i+1}$. We choose $j\in \{i-1,i+1\}$ with larger
$|P_i\cap P_j|$. By Remark~\ref{rem:smallantipodal}, we have
$|P_i\cap P_{j}|\geq \frac{1}{2}\mathrm{Bal}(T'_i)$.

To prove that $T'_i\cup T'_j$ is a tile we compute its cancellation.
\begin{align*}
\mathrm{Cancel}(T_{i}'\cup T'_{j})&\geq \mathrm{Cancel}(T'_{j})+\frac{1}{2}\mathrm{Bal}(T'_i)+\mathrm{Cancel}(T'_{i})\\
&\geq\frac{1}{4}(|T'_{j}|-1)l+\frac{1}{8}(|T'_{i}|+1)l+\frac{1}{2}\mathrm{Cancel}(T'_{i})\\
&\geq\frac{1}{4}(|T'_{j}|-1)l+\frac{1}{4}|T'_{i}|l\qedhere
\end{align*}
\end{proof}

To obtain the final contradiction, it suffices to observe that
Claim~\ref{cla:consecutive_tiles} saying that $T'_i\cup T'_j$
is a tile, whereas $|T'_i|+|T'_j|\leq 3$, contradicts
Construction~\ref{constr:main}.
\end{proof}

\section{Quasi-convexity}
\label{sec:quasi-convexity}

In this section we complete the proof of
Theorem~\ref{thm:walls_trees} using
Theorem~\ref{thm:quasiisometry} saying that hypergraphs are
quasi-isometrically embedded in $X$. It is intriguing that we
will not use Theorem~\ref{thm:quasiisometry} directly in the
proof of Theorem~\ref{thm:main} but it is difficult to
circumvent to obtain Theorem~\ref{thm:walls_trees}.

Let $\gamma$ be a hypergraph segment. Consider the path metric
on $\gamma$ where all diagonals have length $\frac{1}{2}l$. Let
$\mathfrak{G}$ be the vertex set of $\gamma$ with the
restricted metric.

\begin{thm}
\label{thm:quasiisometry} Let $d<\frac{5}{24}$. There are
constants $\Lambda, c$, such that w.o.p.\ the map from the
vertex set $\mathfrak{G}$ of any hypergraph segment to
$X^{(1)}$ is a $(\Lambda, cl)$--quasi-isometric embedding.
\end{thm}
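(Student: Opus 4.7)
\medskip\noindent\textbf{Plan.}
The upper bound $d_{X^{(1)}}(v,w)\leq 2\,d_{\mathfrak{G}}(v,w)$ is immediate: each diagonal of $\mathfrak{G}$-length $l/2$ joins two edge midpoints in a common $2$-cell, whose boundary path embeds by Corollary~\ref{cor:cells_embed}, so those midpoints are at $X^{(1)}$-distance at most $l$.

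For the lower bound the strategy is to combine a local $X^{(1)}$-distance estimate with the Gromov hyperbolicity of $X$, which w.o.p.\ has constant $\delta=O(l)$ since $d<1/2$. The local assertion to be proved is that for each fixed $M$, w.o.p.\ the endpoints $x_i,x_{i+M}$ of any sub-decomposition $\gamma_{i+1}\cdots\gamma_{i+M}$ (in the sense of Definition~\ref{def:decomposition}) of a hypergraph segment satisfy $|x_i,x_{i+M}|_{X^{(1)}}\geq \mu M l$, for some $\mu=\mu(d)>0$ independent of~$M$.

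The proof of the local assertion proceeds by contradiction, in close analogy with Proposition~\ref{prop:main}. Assume $|x_i,x_{i+M}|_{X^{(1)}}<\mu M l$ and let $\beta$ be a witnessing $X^{(1)}$-geodesic. By Remark~\ref{rem:diagram2} there is a disc diagram $\mathbf{D}\to X$ bounded by $\gamma_{i+1}\cdots\gamma_{i+M}$ and~$\beta$. Invoking Proposition~\ref{prop:shrink}(ii) and Proposition~\ref{prop:main}, one can tighten the sub-decomposition, shrinking $T_{k+1}$ to $C_{k+1}$ whenever it shares a $2$-cell with $T_k$, exactly as in the proof of Proposition~\ref{prop:main}. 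Form $Y=\bigcup_k T'_k\cup\mathrm{image}(\mathbf{D})$ and let $\mathcal{C}$ be the $2$-cells of $Y$ outside the $T'_k$. The cancellation estimate of Claim~\ref{cla:c=0}, modified by replacing the contribution of an outer tile $T_0$ by the deficit $-\tfrac{1}{2}|\beta|$ arising from $\beta\subset\partial Y$, yields
\[
\mathrm{Cancel}(Y) \;\geq\; \tfrac{1}{4}|Y|l+\tfrac{1}{4}|\mathcal{C}|l-\tfrac{1}{2}|\beta|.
\]
For $|\beta|<\mu M l$ and sufficiently small $\mu,\varepsilon$, using $|Y|\geq M$ the right-hand side exceeds $(d+\varepsilon)|Y|l$, contradicting Proposition~\ref{prop:isoperimetry}. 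The proposition applies because $|Y|=O(M)$ is uniformly bounded for fixed $M$: Theorem~\ref{thm:isoperimetry} applied to $\mathbf{D}$ with $|\partial\mathbf{D}|=O(Ml)$ gives $|\mathbf{D}|=O(M)$.

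Finally I would promote the local assertion to a global bound by the Morse lemma for hyperbolic spaces: choosing $M$ large enough that the scale $\mu Ml$ exceeds the local-to-global threshold for the $\delta$-hyperbolic space $X^{(1)}$ (with $\delta=O(l)$), the sequence $x_0,x_1,\ldots,x_n$ is an $M$-local quasi-geodesic, hence a global $(\Lambda_0,c_0l)$-quasi-geodesic in $X^{(1)}$, with constants depending only on $\mu$, $M$, and $\delta/l$. Combined with the trivial upper bound $d_{\mathfrak{G}}(v,w)\leq 2nl$ (each tile has size at most $4$ by Remark~\ref{rem-tile-size-bound}, so each $\gamma_k$ contributes $\mathfrak{G}$-length $\leq 2l$), this yields constants $\Lambda,c$ realising the claimed $(\Lambda,cl)$-quasi-isometric embedding. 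The main obstacle is the local estimate: carefully handling $\beta\subset\partial Y$ in the cancellation bookkeeping and ensuring that $|Y|$ is uniformly bounded in $M$ so that Proposition~\ref{prop:isoperimetry} remains applicable; the subsequent local-to-global step is standard hyperbolic geometry.
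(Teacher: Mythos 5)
Your proposal is correct and follows essentially the same route as the paper: a local linear lower bound obtained from a tight decomposition with shrunk tiles, the cancellation estimate for $Y=\bigcup_i T_i'\cup\mathrm{image}(D)$ with the $-\tfrac12|\beta|$ deficit (exactly the paper's $\mathrm{Cancel}(Y)\geq\tfrac14|Y|l-\tfrac12|\alpha|l$), and Proposition~\ref{prop:isoperimetry}, followed by the local-to-global principle for quasigeodesics in the $O(l)$--hyperbolic Cayley graph. The only differences are cosmetic: the paper rescales the metric by $1/l$ and phrases the local step as a $\lambda$--bilipschitz bound with $\lambda=\tfrac{1}{1-4d}$ rather than your $\mu Ml$ estimate.
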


Before we prove Theorem~\ref{thm:quasiisometry}, we give the following.

\begin{proof}[Proof of Theorem~\ref{thm:walls_trees}]
Suppose that we have a hypergraph segment $xx'$ that
self-intersects, i.e.\ starts and ends at the same $2$--cell,
so that $|x,x'|_{X^{(1)}}\leq \frac{1}{2}l$. Denote by $n$ the
number of $2$--cells traversed by $xx'$. By
Theorem~\ref{thm:quasiisometry}, we have
$$|x,x'|_{X^{(1)}}\geq \frac{1}{\Lambda}\bigg(n\frac{1}{2}l\bigg)-cl.$$
Hence we have an a priori bound $n\leq (2c+1)\Lambda$. Thus by
Remark~\ref{rem:diagram} it suffices to apply
Proposition~\ref{prop:main} with $N=(2c+1)\Lambda$.
\end{proof}

\begin{proof}[Proof of Theorem~\ref{thm:quasiisometry}]
The Cayley graph $X^{(1)}$ of a random group at a fixed density
$d<\frac{1}{2}$ is w.o.p.\ hyperbolic with the hyperbolicity
constant a linear function of $l$. We thus rescale the metric
on $X^{(1)}$ by $\frac{1}{l}$ so that the constant is uniform.
We also rescale by $\frac{1}{l}$ the metric on $\gamma\supset
\mathfrak{G}$. We appeal to \cite[Thm 5.21]{GH} implying that
local quasigeodesics are quasigeodesics. More precisely, to
prove Theorem~\ref{thm:quasiisometry}, it suffices to find
$\lambda$ such that for some sufficiently large $N=N(\lambda)$ the
map to $X^{(1)}$ from any $\mathfrak{G}$ of cardinality $\leq
N$ is $\lambda$--bilipschitz. We will do that for
$\lambda=\frac{1}{1-4d}$.

Let $\gamma$ be a hypergraph segment with vertex set
$\mathfrak{G}$ of cardinality $\leq N$ and let $\alpha$ be a
geodesic in $X^{(1)}$ joining the endpoints of $\gamma$.

By taking for $C_i$ consecutive $2$--cells traversed by
$\gamma$, we see that $\gamma$ has a decomposition $\gamma_1
\cdots \gamma_n$ and by Propositions~\ref{prop:shrink}(ii)
and~\ref{prop:main} we can assume that it is tight. We put
$T'_1=T_1$ and as in the proof of Proposition~\ref{prop:main},
for every $T_{i+1}$ sharing a $2$--cell with $T_i$ we define
$T_{i+1}' = C_{i+1}$ and call it \emph{shrunk}, and otherwise
we take $T_{i+1}' = T_{i+1}$. By Remark~\ref{rem:diagram2},
there is a disc diagram $D\rightarrow X$ bounded by $\gamma_1
\cdots \gamma_n$ and $\alpha$. After passing to a subdiagram we
can assume that there is no $2$--cell in $D$ mapped to $T_i$
adjacent to $\alpha_i$.

Let $Y\subset X$ be the subcomplex that is the union of $T_i'$
and the image of $D$. Let $\mathcal{C}$ be the $2$--cells of
$Y$ outside $\bigcup_i T_i'$. Let $P_i\subset T'_i$ be the span
of the image of $\alpha_i$, which we append as in the proof of
Proposition~\ref{prop:main} for $T_i'$ shrunk. Thus $|P_i|\geq
\mathrm{Bal}(T'_i)$. We estimate the cancellation in $Y$ using
Remark~\ref{rem:cancel}. Recall that we rescaled the length of
$\alpha$ by factor $l$.
\begin{align*}
\mathrm{Cancel}(Y)&\geq \sum_{i=1}^n\mathrm{Cancel}(T_i')+\frac{1}{2}\Big(\sum_{i=1}^n|P_i|+|\mathcal C|l-|\alpha|l\Big)\\
&\geq
\sum_{i=1}^n\bigg(\frac{1}{2}\mathrm{Bal}(T'_i)+\mathrm{Cancel}(T'_i)\bigg)+\frac{1}{2}|\mathcal C|l-\frac{1}{2}|\alpha|l\\
&\geq\frac{1}{4}|Y|l-\frac{1}{2}|\alpha|l .
\end{align*}

Note that the value $N=N(\lambda)$ gives a uniform bound on the
length of $\alpha$ and consequently by
Theorem~\ref{thm:isoperimetry} a uniform bound on the size of
$D$ and $Y$. We can thus apply Proposition~\ref{prop:isoperimetry} to $Y$, which yields
$$2\bigg(\frac{1}{4}-d\bigg)|Y|\leq |\alpha|.$$
The distance $|\gamma|$ between the endpoints of $\gamma$ in
$\mathfrak{G}$ is $\leq\frac{1}{2}|Y|$, thus we obtain
$\frac{1}{\lambda}|\gamma|\leq|\alpha|$. Since consecutive
points of $\mathfrak{G}$ are at distance $\leq \frac{1}{2}$ in
$X^{(1)}$, we also have $|\alpha|\leq |\gamma|$. Thus
$\mathfrak{G}\rightarrow X^{(1)}$ is $\lambda$--bilipschitz, as
desired.
\end{proof}

Before we prove Theorem~\ref{thm:main}, we need the following lemma.

\begin{lem}
\label{lem:transitive}
Let $d<\frac{5}{24}$. W.o.p.\ there is a hypergraph in $X$ and an element of its stabiliser in $G$ exchanging its two complementary components.
\end{lem}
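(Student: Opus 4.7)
The plan is to exploit the evenness of $l$ to construct explicitly a hypergraph $\Gamma$ and an element $g\in G_\Gamma$ that exchanges the two complementary components of $X\setminus\Gamma$. The key observation is: if a relator $r=s_{i_1}s_{i_2}\cdots s_{i_l}\in R$ satisfies the letter coincidence $s_{i_1}=s_{i_{l/2+1}}$, then inside the $2$--cell $C_r$ the partial product $g=s_{i_1}\cdots s_{i_{l/2}}\in G$ sends the edge $e_1=(v_0,v_1)$ to the antipodal edge $e_{l/2+1}=(v_{l/2},v_{l/2+1})$, where $v_j=s_{i_1}\cdots s_{i_j}$. Consequently $g$ stabilises the hypergraph through the midpoints $m_1,m_{l/2+1}$ while exchanging the two half-discs of $C_r$ cut out by the diagonal $m_1m_{l/2+1}$.

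First I would carry out the probabilistic step: w.o.p.\ there exists $r\in R$ with both $s_{i_1}=s_{i_{l/2+1}}$ and $\mathcal{T}(C_r)=C_r$. The letter condition depends only on $r$; the proportion of cyclically reduced words of length $l$ with equal letters at positions $1$ and $l/2+1$ is $\sim 1/(2m)$, so the expected number of such relators is $\sim (2m-1)^{dl}/(2m)\to\infty$. The singleton-tile condition $\mathcal{T}(C_r)=C_r$ fails only if $C_r$ is merged during Construction~\ref{constr:main}, which requires $C_r$ to overlap another $2$--cell in a subpath of length $>\frac{1}{4}l$; a direct probabilistic estimate shows that for $d<\frac{1}{4}$ the fraction of $2$--cell orbits so affected is $o(1)$. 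Combining these near-independent conditions by a standard union bound, w.o.p.\ some $r\in R$ satisfies both.

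For such $r$, since $\mathcal{T}(C_r)=C_r$, by Definition~\ref{def:balanced} the relation $\sim_{C_r}$ is antipodal, so $m_1\sim m_{l/2+1}$. Let $\Gamma$ be the hypergraph containing them; by Theorem~\ref{thm:walls_trees} $\Gamma$ is an embedded tree separating $X$ into halfspaces $X_+,X_-$. Setting $g=s_{i_1}\cdots s_{i_{l/2}}=v_{l/2}\in G$, I compute $gv_0=v_{l/2}$, and using $s_{i_1}=s_{i_{l/2+1}}$, $gv_1=v_{l/2}\cdot s_{i_1}=v_{l/2}\cdot s_{i_{l/2+1}}=v_{l/2+1}$. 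Hence $ge_1=e_{l/2+1}$ and $gm_1=m_{l/2+1}\in\Gamma$. Since $\sim$ is $G$-equivariant (because the tile assignment $\mathcal{T}$ and the tile-wall relations are constructed equivariantly), $g$ preserves equivalence classes, so $g\Gamma=\Gamma$ and $g\in G_\Gamma$.

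Finally I would verify the side-swap. Since $\Gamma$ is embedded and $\Gamma\cap C_r$ is the single diagonal from $m_1$ to $m_{l/2+1}$, this diagonal partitions $C_r$ into two topological discs, one containing the vertices $v_1,\ldots,v_{l/2}$ and the other containing $v_{l/2+1},\ldots,v_{l-1},v_0$, which therefore lie in opposite halfspaces $X_+,X_-$. Since $gv_0=v_{l/2}$, the element $g$ sends a vertex of one halfspace to a vertex of the other, and hence exchanges $X_+$ and $X_-$. The main obstacle is the probabilistic step: guaranteeing the simultaneous satisfiability of the letter condition and the singleton-tile condition. This should follow from their near-independence, the letter condition concerning only the single relator $r$ while the singleton-tile condition concerns $r$'s interactions with the rest of $R$.
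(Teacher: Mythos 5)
Your deterministic part matches the paper: find a relator $r$ with a letter coincidence at an antipodal pair of positions whose $2$--cell carries the antipodal relation; then the element carrying one of these edges to the other stabilises the hypergraph through that diagonal and swaps the two sides. That is exactly the paper's argument. The gap is in your probabilistic step guaranteeing $\mathcal{T}(C_r)=C_r$. You assert that this ``fails only if $C_r$ overlaps another $2$--cell in a subpath of length $>\frac14 l$,'' but that is not the failure event in Construction~\ref{constr:main}. In Step~1, a singleton $2$--cell is absorbed when its \emph{total} intersection with a tile $T'$ (of size up to $3$) exceeds $\frac14 l$, and by Lemma~\ref{lem:tile_intersection_connected} this intersection may be spread over several $2$--cells of $T'$, each overlap being $\leq\frac14 l$. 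Worse, in Step~2 the cell $C_r$ is absorbed into a size--$2$ tile $T$ as soon as $T\cup C_r$ is a tile, i.e.\ as soon as $|C_r\cap T|>\frac12 l-\mathrm{Cancel}(T)$, which can be an arbitrarily short overlap when $\mathrm{Cancel}(T)$ is close to $\frac12 l$. So the event you need to exclude is ``$C_r$ lies in some uniformly bounded complex $Y'$ with $\mathrm{Cancel}(Y')>\frac14(|Y'|-1)l$,'' not merely a single long pairwise overlap, and your ``direct probabilistic estimate'' does not address it.

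Moreover, excluding that event for the \emph{specific} relator $r$ (which is now constrained by the letter-coincidence condition) is not a consequence of the plain isoperimetric inequality: for $\frac18<d<\frac14$ such high-cancellation complexes do exist w.o.p.\ somewhere in $X$, and the question is whether one contains $C_r$. The paper handles exactly this point with Corollary~\ref{cor:rel_notintile}, which applies Odrzyg\'o\'zd\'z's fixed-path inequality (Proposition~\ref{prop:isoperimetry2}) to the presentation with relators $R-\{r_1\}$, independent of $r_1$, using the polynomial labelling scheme by subwords of cyclic translates of $r_1$; one also needs the observation from Construction~\ref{constr:main} that a tile maps distinct $2$--cells to distinct relators, so that $C_r$ is the unique cell of the putative tile carried to $r$ (your sketch ignores the possibility of a tile containing two copies of $r$). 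To repair your proof you should replace your overlap estimate by an appeal to Corollary~\ref{cor:rel_notintile} (or reproduce a first-moment bound over all bounded complexes containing $C_r$ with cancellation $>\frac14(|Y'|-1)l$, with the $r$--labels fixed), after which the rest of your argument, including the side-swap verification, goes through.
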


Actually $G$ acts transitively on the set of complementary components of all hypergraphs, but we do not need this.

\begin{proof}
We will prove that there is a relator $r\in R$ such that
\begin{enumerate}[(1)]
\item
there are two antipodal occurrences of a letter $s$ in $r$, and
\item
the relation $\sim_C$ on some (hence any) $2$--cell $C$ corresponding to $r$ is antipodal.
\end{enumerate}
This suffices to prove the lemma, for if a hypergraph contains a diagonal $\gamma$ connecting the midpoints of the directed edges $e,e'$ of~$C$
labelled by same letter, then there exists $g \in G$ with $ge = e'$, and so $g$ stabilises
that hypergraph. Since inside $C$ the edges $e,e'$ cross $\gamma$ in opposite directions, we see that $g$
exchanges the complementary components of this hypergraph.

We claim that w.o.p.\ the first relator $r_1\in R$ satisfies condition (1).
Since there are $2m$ letters in $S\cup S^{-1}$, the probability
that a fixed antipodal edge pair is labelled by the same letter
is nearly $\frac{1}{2m}$ as $l\rightarrow\infty$, and
conditioned on the event that a preceding antipodal edge pair
is not labelled by the same letter is nearly
$\frac{2m-2}{(2m-1)^2}$. However, there are $\frac{1}{2}l$
antipodal edge pairs in $r_1$ so the probability that none of
them is labelled by the same letter tends to $0$ as
$l\rightarrow\infty$, justifying the claim.

By Construction~\ref{constr:main}, the map from a tile in $\mathcal{T}$
to $X/G$ maps distinct $2$--cells to distinct $2$--cells. In particular, if a $2$--cell of $X$ corresponding to $r_1$ lay in a tile of size $>1$,
then this would contradict Corollary~\ref{cor:rel_notintile}. Thus condition (2) is satisfied as well.
\end{proof}

\begin{proof}[Proof of Theorem~\ref{thm:main}]
Let $F\subset G$ be the stabiliser of a hypergraph $\Gamma$ in
$X$ satisfying Lemma~\ref{lem:transitive}. Note that $F$ acts
on $\Gamma$ cocompactly since $\sim_C$ are $G$--invariant.

We claim that the components of $X-\Gamma$ are
\emph{essential}, i.e.\ they are not at finite distance from
$\Gamma$. Otherwise, since they are exchanged by $F$, both
components of $X-\Gamma$ are at finite distance from $\Gamma$.
Thus $F$ acts cocompactly on $X$ and consequently $G$ is
quasi-isometric to $F$ hence to $\Gamma$, which is a
tree by Theorem~\ref{thm:walls_trees}. Recall that $G$ is
w.o.p.\ torsion free, since $X$ is contractible
\cite{Gro,O-hyp}. Thus by Stallings Theorem \cite{St}, the
group $G$ is free and hence $\chi(G)\leq 0$. But on the other
hand we have $\chi(G)=1-m+\lfloor(2m-1)^{dl}\rfloor>0$ for $l$
large enough, which is a contradiction.

This justifies the claim that the components of $X-\Gamma$ are
essential. Let $F'\subset F$ be the index $2$ subgroup
preserving the components of $X-\Gamma$. Then the number of
relative ends $e(G,F')$ is greater than $1$. Thus Sageev's
construction (\cite[Thm~3.1]{S}, see also \cite{Ger} and
\cite{NR}) gives rise to a nontrivial action of $G$ on a CAT(0)
cube complex. By \cite{NR} the group $G$ does not satisfy
Kazhdan's property~$\mathrm{(T)}$.
\end{proof}

\begin{rem}
\label{rem:finite dim} By Theorem~\ref{thm:quasiisometry}, the
subgroup $F'$ above is quasi-isometrically embedded in the
hyperbolic group $G$. Thus by \cite[Thm~3.1]{S2} or~\cite{GMRS}
there is a CAT(0) cube complex satisfying
Theorem~\ref{thm:main} for which the action of $G$ is cocompact
and the complex is finite dimensional.
\end{rem}

\begin{bibdiv}
\begin{biblist}

\bib{Ger}{article}{
   author={Gerasimov, V. N.},
   title={Semi-splittings of groups and actions on cubings},
   conference={
      title={Algebra, geometry, analysis and mathematical physics (Russian)
      },
      address={Novosibirsk},
      date={1996},
   },
   book={
      publisher={Izdat. Ross. Akad. Nauk Sib. Otd. Inst. Mat., Novosibirsk},
   },
   date={1997},
   pages={91--109, 190}}

\bib{GH}{article}{
   author={Ghys, {\'E}tienne},
   author={de la Harpe, Pierre},
   title={Quasi-isom\'etries et quasi-g\'eod\'esiques},
   language={French},
   conference={
      title={Sur les groupes hyperboliques d'apr\`es Mikhael Gromov (Bern,
      1988)},
   },
   book={
      series={Progr. Math.},
      volume={83},
      publisher={Birkh\"auser Boston, Boston, MA},
   },
   date={1990},
   pages={79--102}}

\bib{GMRS}{article}{
   author={Gitik, Rita},
   author={Mitra, Mahan},
   author={Rips, Eliyahu},
   author={Sageev, Michah},
   title={Widths of subgroups},
   journal={Trans. Amer. Math. Soc.},
   volume={350},
   date={1998},
   number={1},
   pages={321--329}}

\bib{Gro}{article}{
   author={Gromov, Misha},
   title={Asymptotic invariants of infinite groups},
   conference={
      title={Geometric group theory, Vol.\ 2},
      address={Sussex},
      date={1991},
   },
   book={
      series={London Math. Soc. Lecture Note Ser.},
      volume={182},
      publisher={Cambridge Univ. Press, Cambridge},
   },
   date={1993},
   pages={1--295}}

\bib{KK}{article}{
   author={Kotowski, Marcin},
   author={Kotowski, Micha{\l}},
   title={Random groups and property $\mathrm{(T)}$: \.Zuk's theorem revisited},
   journal={J. Lond. Math. Soc. (2)},
   volume={88},
   date={2013},
   number={2},
   pages={396--416}}

\bib{NR}{article}{
   author={Niblo, Graham A.},
   author={Roller, Martin A.},
   title={Groups acting on cubes and Kazhdan's property $\mathrm{(T)}$},
   journal={Proc. Amer. Math. Soc.},
   volume={126},
   date={1998},
   number={3},
   pages={693--699}}

\bib{Od}{article}{
   title= {Nonplanar isoperimetric inequality for random groups}
   author={Odrzyg\'o\'zd\'z, Tomasz}
   date= {2014}
   eprint={http://students.mimuw.edu.pl/~to277393/web/files/nonplanar.pdf}}

\bib{O-hyp}{article}{
   author={Ollivier, Yann},
   title={Sharp phase transition theorems for hyperbolicity of random
  groups},
   journal={Geom. Funct. Anal.},
   volume={14},
   date={2004},
   number={3},
   pages={595--679}}

\bib{O-sur}{book}{
   author={Ollivier, Yann},
   title={A January 2005 invitation to random groups},
   series={Ensaios Matem\'aticos [Mathematical Surveys]},
   volume={10},
   publisher={Sociedade Brasileira de Matem\'atica, Rio de Janeiro},
   date={2005},
   pages={ii+100}}

\bib{O-iso}{article}{
   author={Ollivier, Yann},
   title={Some small cancellation properties of random groups},
   journal={Internat. J. Algebra Comput.},
   volume={17},
   date={2007},
   number={1},
   pages={37--51}}

\bib{OW}{article}{
   author={Ollivier, Yann},
   author={Wise, Daniel T.},
   title={Cubulating random groups at density less than $1/6$},
   journal={Trans. Amer. Math. Soc.},
   volume={363},
   date={2011},
   number={9},
   pages={4701--4733}}

\bib{S}{article}{
   author={Sageev, Michah},
   title={Ends of group pairs and non-positively curved cube complexes},
   journal={Proc. London Math. Soc. (3)},
   volume={71},
   date={1995},
   number={3},
   pages={585--617}}

\bib{S2}{article}{
   author={Sageev, Michah},
   title={Codimension-$1$ subgroups and splittings of groups},
   journal={J. Algebra},
   volume={189},
   date={1997},
   number={2},
   pages={377--389}}

\bib{St}{article}{
   author={Stallings, John R.},
   title={On torsion-free groups with infinitely many ends},
   journal={Ann. of Math. (2)},
   volume={88},
   date={1968},
   pages={312--334}}

\bib{Z}{article}{
   author={{\.Z}uk, Andrzej},
   title={Property $\mathrm{(T)}$ and Kazhdan constants for discrete groups},
   journal={Geom. Funct. Anal.},
   volume={13},
   date={2003},
   number={3},
   pages={643--670}}

\end{biblist}
\end{bibdiv}

\end{document}